\newtheorem{theorem}{Theorem}[section]
\newtheorem{lemma}[theorem]{Lemma}
\newtheorem{corollary}[theorem]{Corollary}
\newtheorem{proposition}[theorem]{Proposition}
\newtheorem{remark}[theorem]{Remark}
\newtheorem{definition}[theorem]{Definition}
\newtheorem{example}[theorem]{Example}
\numberwithin{equation}{section}
\def\A{\mathcal A}  
\def\B{\mathcal B}
\def\C{\mathcal C}
\def\D{\mathcal D}
\def\K{\mathcal K}
\def\F{\mathcal F}
\def\Q{\mathcal Q}
\def\P{\mathcal P}
\def\X{\mathcal X}
\def\Y{\mathcal Y}
\def\id{1}
\def\H{\mathcal H}  
\def\ZZ{\mathbb Z} 
\def\CC{\mathbb C} 
\def\RR{\mathbb R} 
\def\essmod{\mathrm{essmod}}
\def\unmod{\mathrm{unmod}}
\def\mod{\mathrm{mod}}
\def\ptp{\mathbin{\widehat{\otimes}}}
\def\iso{\cong}
\newcommand{\Tor}{\rm Tor} 
\newcommand{\ext}{\operatorname{Ext}}
\newcommand{\Xer}{\mathop{\rm Ker}~ } 
\newcommand{\im}{\mathop{\rm Im}}
\newcommand{\from}{\leftarrow}
\begin{document}
\title[The higher-dimensional amenability of tensor products of algebras]{The higher-dimensional amenability of tensor products of Banach algebras}

\author{Zinaida A. Lykova}

\address{Zinaida A. Lykova, School of Mathematics and Statistics,  
Newcastle University, 
Newcastle upon Tyne, NE\textup{1} \textup{7}RU, UK}

\email{Z.A.Lykova@newcastle.ac.uk} 

\date{April 29, 2009}

\begin{abstract} We investigate the higher-dimensional amenability of
tensor products $\A \ptp \B$ of Banach algebras $\A$ and $\B$.
We prove that the weak bidimension $db_w$ of the tensor product $\A \ptp \B$ of Banach algebras $\A$ and $\B$ with bounded approximate identities satisfies 
\[
db_w \A \ptp \B = db_w \A + db_w \B.
\]
We show that it cannot be extended to arbitrary Banach algebras. For example, for  a biflat
 Banach algebra $\A$ which has a left or right, but not two-sided,  bounded approximate identity, we have   
$db_w \A \ptp \A \le 1$ and $db_w \A + db_w \A =2.$
We  describe explicitly the continuous Hochschild cohomology $\H^n(\A \ptp \B, (X \ptp Y)^*)$ and the cyclic  cohomology $\H\C^n(\A \ptp \B)$ of certain tensor products $\A \ptp \B$ of Banach algebras $\A$ and $\B$  with bounded approximate identities; here $(X \ptp Y)^*$ is the dual bimodule of the tensor product of essential Banach bimodules
$X$  and $Y$ over  $\A$ and $\B$ respectively.

\noindent 2000 {\it Mathematics Subject Classification:} 
Primary 55U25, 19D55, 43A20.
\end{abstract}


\keywords{Hochschild cohomology, cyclic cohomology, $C^*$-algebra, semigroup algebra.}

\maketitle

\section{INTRODUCTION}

Hochschild  cohomology groups of Banach and $C^*$ algebras play an important role in $K$-theory \cite{Cu} and in noncommutative geometry \cite{Co2}. However, it is very difficult to describe explicitly non-trivial higher-dimensional cohomology for Banach algebras. In this paper we consider Hochschild  cohomology groups  of tensor products $\A \ptp \B$ of Banach algebras $\A$ and $\B$  with bounded approximate identities.

A Banach algebra $\A$ such that $ {\mathcal H}^1(\A,X^*) =\{ 0 \}$
for all Banach $\A$-bimodules $X$ is called {\it amenable} \cite{BEJ1}.
B. E. Johnson proved in \cite[Proposition 5.4]{BEJ1} that $\A \ptp \B$
is amenable if the Banach algebras $\A$ and $\B$ are amenable. We determine relations between the higher-dimensional amenability of Banach algebras $\A$ and $\B$,  and the  higher-dimensional amenability of their tensor product algebra $\A \ptp \B$.
Virtual diagonals and higher-dimensional amenability of Banach algebras  were investigated in a paper of E.G. Effros and  A. Kishimoto \cite{EfKi87} for unital algebras  and in papers of 
A.L.T. Paterson and R.R. Smith \cite{Pat2,PaSm} in the non-unital case.
Recall that, for any $n \ge 1$, a Banach algebra $\A$ is called $n$-{\it amenable} if  the continuous Hochschild cohomology $\H^n(\A, X^*) =\{0 \}$ for every Banach $\A$-bimodule $X$. The {\it  weak bidimension} of a Banach algebra $\A$ is
\[
db_w \A = \inf~ \{n: {\mathcal H}^{n+1} (\A, X^*) = \{ 0 \} \; {\rm for \; all \; Banach} \; \A\text{-bimodule} \; X \}
\]
(see \cite{Sel95}).  It is clear that  a Banach algebra $\A$ is $n$-amenable if and only if $db_w \A = n-1$, and that $\A$ is amenable if and only if $db_w \A = 0$.

In 1996  Yu. Selivanov announced in Remark 4 \cite{Sel96} without proof that the weak bidimension $db_w$ of the tensor product $\A \ptp \B$ of Banach algebras $\A$ and $\B$ with bounded approximate identities satisfies 
\[
db_w \A \ptp \B = db_w \A + db_w \B.
\]
In 2002 he gave in \cite[Theorem 4.6.8]{Sel02} a proof of the formula  in the particular case of algebras with identities, and his proof depends heavily on the existence of identities. In Theorem \ref{dbwAptpB} of this paper we prove that the formula is correct for  algebras with bounded approximate identities (b.a.i.). We show further that the formula does {\em not} hold for algebras with only 1-sided b.a.i., nor for algebras no b.a.i.
To this end we need to develop homological tools for algebras with bounded approximate identities and flat essential modules. The well-known trick  adjoining  of an identity to the algebra does not work for  the tensor product of algebras. 
The homological properties of the tensor product algebras $\A {\mathbin{\widehat{\otimes}}} \B$ and $\A_+ {\mathbin{\widehat{\otimes}}} \B_+$ are different; here
$\A_+$ is the Banach algebra obtained by adjoining an identity to $\A$. In Theorem \ref{AptpAone-side-bai} we show that, for a biflat
 Banach algebra $\A$ which has a left or right, but not two-sided,  bounded approximate identity, we have  
$db_w \A \ptp \A \le 1$ and $db_w \A_+ \ptp \A_+ = 2 db_w \A = 2$. For example, for the algebra $\K(\ell_2 \ptp \ell_2)$ of compact operators on $ \ell_2 \ptp \ell_2$ and any integer $n \ge 1$,  
\[
db_w [\K(\ell_2 \ptp \ell_2)]^{\ptp n} \le 1 \; \;{\rm and} \;\;
db_w [\K(\ell_2 \ptp \ell_2)_+]^{\ptp n}=n.
\]

We prove that, for  the tensor product $\A \ptp \B$ of Banach algebras $\A$ and $\B$ with bounded approximate identities, for the tensor product $X \ptp Y$ of essential Banach bimodules $X$ and $Y$ over $\A$ and $\B$ respectively and for $n \ge 0$, up to topological isomorphism, 
\[ \H^n(\A \ptp \B, (X \ptp Y)^*) = 
H^n(({\mathcal C}_{\sim}(\A, X) \ptp {\mathcal C}_{\sim}(\B, Y))^*), \]
where ${\mathcal C}_{\sim}(\A, X)$ and ${\mathcal C}_{\sim}(\B, Y)$ are the standard homological chain complexes.

We  describe explicitly the continuous Hochschild cohomology 
\newline $\H^n(\A \ptp \C, (X \ptp Y)^*)$
and the cyclic  cohomology $\H\C^n(\A \ptp \C)$ of certain tensor products of Banach algebras. 
For example, in Corollary \ref{ho-coho-L1-B}  we prove that, for
an amenable Banach algebra $\C$,
$$\H^n(L^1(\RR_+^k)\ptp \C, (L^1(\RR_+^k)\ptp \C)^*) \iso
 {\bigoplus\nolimits^{k \choose n}} \left[L^1(\RR_+^k) \ptp \left(\C/[\C,\C] \right) \right]^* $$
if $n\leq k.$ In Corollary \ref{ho-coho-A-B(H)} we show that all continuous cyclic type cohomology groups are trivial for
a Banach algebra ${\mathcal D}$ belonging to
one of the following classes:
\newline {\rm (i)} $\D=\ell^1(\ZZ_+^k) \ptp \C$, where 
 $\C$ is a $C^*$-algebra without non-zero bounded traces;
or
\newline {\rm (ii)} $\D=L^1(\RR_+^k) \ptp \C$, where 
 $\C$ is a $C^*$-algebra without non-zero bounded traces.

\section{DEFINITIONS AND NOTATION}

 We recall some notation and terminology used in  homological theory. 
These can be found in any textbook on homological algebra, for instance, 
MacLane \cite{Mac} for the pure algebraic case 
and  Helemskii \cite{He0} for the continuous case.

For a Banach algebra $\A$, let $X$ be a Banach $\A$-bimodule and let $S$ be a subset of $\A$.  Then 
$S^2 $ is defined to be the linear span of the set 
$\{ a_1 \cdot a_2 : \; a_1, a_2 \in S \} $, $SX$ is the linear span of the
set $\{ a \cdot x : \; a \in S,\; x \in X \} $ and
$\overline{SX} $ is the closure of  $SX$ in $X$. Expressions like  
$\overline{XS}$ and $\overline{SXS}$ have similar meanings. 
A Banach $\A$-bimodule $X$ is called {\it essential} if $X =
\overline{\A X \A}$.
Let  $I$ be a closed two-sided ideal with a bounded approximate identity;
then, by an extension of the Cohen factorization theorem,
 $\overline{IX}= \{ b \cdot x: b \in I,$ \hbox{$ x \in X \}$.}
These facts may be found in
 \cite{Hew, CF-T}; see also 
\cite[Proposition 1.8]{BEJ1} and \cite[Theorem 5.2.2]{Pa}.

The category  of  Banach spaces and 
continuous linear operators is denoted by 
${\mathcal Ban}$. For a Banach algebra $\A$,
the category of  left [essential] \{unital\} Banach    $\A$-modules is denoted by 
$\A$-$\mod$ [$\A$-$\essmod$] \{$\A$-$\unmod$\},
the category of  right [essential] \{unital\}  Banach    $\A$-modules is denoted by 
$\mod$-$\A$ [$\essmod$-$\A$] \{$\unmod$-$\A$\}
 and the category of  [essential] \{unital\}  Banach  $\A$-bimodules is denoted by $\A$-$\mod$-$\A$ [$\A$-$\essmod$-$\A$] \{$\A$-$\unmod$-$\A$\}.

For a Banach space $E$, we will denote by  $E^*$ 
 the dual space of $E$. In the case of Banach algebras $\A$,
for a Banach $\A$-bimodule $X$, $X^*$ is the Banach $\A$-bimodule dual to $X$ with the module multiplications given by
\[ (a \cdot f)(x) = f(x \cdot a), \; (f \cdot a)(x) = f(a \cdot x)\;\;(a \in \A, \; f \in X^*, \; x \in X ). 
\]
 A {\it chain complex} $\X$ in ${\mathcal Ban}$
is a family of Banach spaces $X_n$ and continuous linear maps
$d_n$ (called {\em boundary maps}) 
$$ \dots \stackrel {d_{n-2}} {
\longleftarrow} X_{n-1} \stackrel {d_{n-1}} { \longleftarrow}  X_{n} 
 \stackrel {d_{n}} { \longleftarrow} X_{n+1} \stackrel {d_{n+1}} 
{ \longleftarrow} X_{n+2}
\longleftarrow  \cdots   $$
such that $\im d_n \subset {\rm Ker}~   d_{n-1}$. 
The subspace  $\im d_{n}$ of $X_{n}$  is denoted by $B_n(\X)$ and 
its elements are called {\it boundaries}. The Banach
subspace ${\rm Ker}~   d_{n-1}$ of $X_{n}$ is denoted by $Z_n(\X)$ and 
its elements are {\it  cycles}. The {\it  homology groups of $\X$} 
are defined by
 $H_n(\X) = Z_n(\X)/B_n(\X)$.
As usual, we will often drop the subscript $n$ of $d_n$. If there 
is a need to distinguish between various boundary maps on various 
chain complexes, we will use subscripts, that is, we will denote 
the boundary maps on $\X$ by $d_{\X}$.
 A chain complex $\X$ is called {\it bounded} if $X_{n} = \{ 0 \}$ 
whenever $n$ is less than a certain fixed integer $N \in \ZZ$.

 Let ${\K}$ be one of the above categories of Banach 
$\A$-modules and morphisms. For $X, Y \in {\K}$, 
the Banach space of morphisms from $X$ to $Y$ is denoted by 
$h_{\K} (X, Y)$. We shall abbreviate 
$ h_{\A\hbox{-} \mod}$ to $ h_{\A}$ and 
 $ h_{\A\hbox{-} \mod \hbox{-}\A}$ to $_{\A}h_{\A}$.

 A complex of   Banach  $\A$-modules and morphisms
is called {\it  admissible} if it 
splits as a complex of Banach  spaces \cite[III.3.11]{He0}.
 A complex of   Banach  $\A$-modules and morphisms
is called {\it weakly admissible} if its dual complex  
splits as a complex of Banach  spaces \cite[III.3.11]{He0}.

A module $P \in \K$ is called {\it projective in} $\K$ if, for each module $Y  \in \K$ and each epimorphism of modules $\varphi \in h_{\K}(Y,P)$ such that $\varphi$ has a right inverse as a morphism of Banach spaces, there exists a  right inverse morphism of Banach  modules
from $\K$.

 For $Y \in \K$, a complex 

\vspace*{0.2cm}
\hspace{1.5cm}
$0  \longleftarrow Y
\stackrel {\varepsilon} { \longleftarrow}  P_0 
 \stackrel {\phi_0} { \longleftarrow} P_1 \stackrel {\phi_1} 
{ \longleftarrow} P_2 \longleftarrow  \cdots  
\hfill {(0 \leftarrow Y \stackrel {\varepsilon} { \longleftarrow}  (\P,\phi))} $ 
\vspace*{0.2cm}
\newline is called a {\it   projective resolution  of  $Y$ in  $\K$ } if it is admissible and all the modules in ${\mathcal P}$ are  
projective in $\K$ \cite[Definition III.2.1]{He0}. 

  We shall denote the $n$th cohomology 
of the complex 
 $ h_{\K}(\P, X)$ where $X \in {\K}$ by 
 $ {\rm \ext}^n_{\K}~(Y, X)$. We shall abbreviate 
$ {\rm \ext}^n_{\A \hbox{-} \mod}$ to $ {\rm \ext}^n_{\A}$ and 
 $ {\rm \ext}^n_{\A \hbox{-} \mod \hbox{-} \A}$ to $ {\rm \ext}^n_{\A^e}$.

Further ${\mathbin{\widehat{\otimes}}}$ is the projective tensor product of Banach spaces \cite{CLM}, \cite[II.4.1]{He0},  $
{\mathbin{\widehat{\otimes}}}_\A$ is the projective tensor product
of   left and right Banach $\A$-modules,
${\mathbin{\widehat{\otimes}}}_{\A-\A}$ is the projective tensor product
of  Banach $\A$-bimodules (see \cite{Rie}).
Note that by $ X^{\ptp 0} \ptp Y $ we mean $Y$, by $ X^{\ptp 1}$ we mean $X$
and by $ X^{\ptp n}$ we mean the $n$-fold projective tensor power $ X\ptp \dots \ptp X $ of $X$.

 For any Banach algebra $\A$, not necessarily unital, $\A_+$ is the Banach algebra obtained by adjoining an identity to $\A$.
 For a Banach  algebra $\A$, the algebra 
$\A^e = \A_+ {\mathbin{\widehat{\otimes}}} \A_+^{op}$
is called the {\it 
 enveloping algebra of $\A$}, where $\A_+^{op}$ is the {\it  opposite algebra}
of $\A_+$ with  multiplication $a \cdot b = ba$.

 A module $Y \in \A$-$\mod$ is called 
{\it flat } if for any admissible complex ${\X}$ of right 
Banach  $\A$-modules the complex ${\X} {\mathbin{\widehat{\otimes}}}_{\A} Y$
is exact. A module $Y \in  \A$-$\mod$-$\A$ is called 
{\it flat } if for any admissible complex ${\X}$ of  
Banach  $\A$-bimodules the complex ${\X} {\mathbin{\widehat{\otimes}}}_{\A^e} Y$
is exact.

   For $X \in {\K}$, a complex 

\vspace*{0.2cm}
\hspace{1.5cm}
$0  \longleftarrow X
\stackrel {\varepsilon} { \longleftarrow}  Q_0 
 \stackrel {\phi_0} { \longleftarrow} Q_1 \stackrel {\phi_1} 
{ \longleftarrow} Q_2
\longleftarrow  \dots  
\hfill {(0  \leftarrow X \stackrel {\varepsilon} { \longleftarrow}{(\Q, \phi)})} $
\vspace*{0.2cm}
\newline is called a {\it  pseudo-resolution of $X$ in  ${\K}$ } 
if it is weakly admissible,
 and a {\it flat pseudo-resolution of $X$ in  ${\K}$ } if, in addition, all the modules in ${\Q}$ are flat in ${\K}$.

 For $X  \in $ $\mod$-$\A$  and $Y \in \A$-$\mod$, we shall denote by  $ \Tor^{\A}_{\it n}(X, Y)$ the $n$th homology 
of the complex  $ X \ptp_{\A}{\mathcal P}$, where 
$(0 \leftarrow Y \leftarrow {\mathcal P})$ is a  projective resolution in
$\A$-$\mod$,  \cite[Definition III.4.23]{He0}.
For $X, Y  \in \A$-$\mod$-$\A$, the Tor-spaces are defined by using the standard identifications $\A$-$\mod$-$\A \cong \A^e$-$\unmod
\cong \unmod$-$\A^e$.

Throughout the paper  $\id_X: X \to X$ denotes the identity
operator and $\iso$ denotes an isomorphism of Banach spaces. 
Given a Banach space $E$ and a chain complex $(\X, d)$ in 
 ${\mathcal Ban}$, we can form the chain complex 
$E \ptp \X$ of the Banach spaces $E \ptp X_n$ and 
boundary maps $\id_E \otimes d$.

We recall the definition of  the tensor product $\X \ptp \Y$ of bounded complexes $\X$ and $\Y$ in ${\mathcal Ban}$ which can be found
  in \cite[Definitions II.5.25]{He0}.

\begin{definition}\label{XptpY}
 Let $\X$, $\Y$ 
be chain complexes in ${\mathcal Ban}$:
$$ 0 \stackrel {\phi_{-1}} {\longleftarrow} X_{0} \stackrel {\phi_{0}} { \longleftarrow}  X_{1} 
 \stackrel {\phi_{1}} { \longleftarrow} X_{2} \stackrel {\phi_{2}} 
{ \longleftarrow} X_{3} \longleftarrow  \cdots   $$
and 
$$ 0 \stackrel {\psi_{-1}} {\longleftarrow} Y_{0} \stackrel {\psi_{0}} { \longleftarrow}  Y_{1} 
 \stackrel {\psi_{1}} { \longleftarrow} Y_{2} \stackrel {\psi_{2}} 
{ \longleftarrow} Y_{3} \longleftarrow  \cdots .  $$
The tensor product $\X \ptp \Y$ of bounded complexes $\X$ and $\Y$ in ${\mathcal Ban}$  is the chain complex 
\begin{equation}
\label{complexXptpY}
 0 \stackrel{\delta_{-1}}{\longleftarrow} (\X \ptp \Y)_{0} \stackrel {\delta_{0}} {\longleftarrow}  (\X \ptp \Y)_{1} 
 \stackrel{\delta_{1}}{\longleftarrow} (\X \ptp \Y)_{2} \stackrel {\delta_{2}}{\longleftarrow} (\X \ptp \Y)_{3} \longleftarrow  \cdots,   
\end{equation}
where
$$(\mathcal X {\mathbin{\widehat{\otimes}}}{\mathcal Y})_n =  
\displaystyle{\bigoplus_{m+q=n}} X_m \ptp Y_q $$
and 
$$ \delta_{n-1} (x \otimes y) = \phi_{m-1}(x) \otimes y + (-1)^m x \otimes   \psi_{q-1}(y),$$
$x \in X_m, y \in Y_q$ and $m+q=n$.
\end{definition}

For Banach spaces $E$ and $F$,  let ${\mathfrak B}(E,F)$ be the Banach space of all continuous linear operators from $E$ to $F$.


\section{PSEUDO-RESOLUTIONS IN CATEGORIES OF BANACH MODULES}

\begin{lemma}\label{WeakAdmis} Let
\begin{equation}\label{weakly-admis-long}
\dots \stackrel {d_{n-2}} {
\longleftarrow} X_{n-1} \stackrel {d_{n-1}} { \longleftarrow}  X_{n} 
 \stackrel {d_{n}} { \longleftarrow} X_{n+1} \stackrel {d_{n+1}} 
{ \longleftarrow} X_{n+2}
\longleftarrow  \cdots  
\end{equation}
be a weakly admissible complex of Banach spaces and continuous linear operators. 
 Then,  for every Banach space $Y$,  the sequence
\begin{equation}\label{weakly-admis-Y-otimes-long}
\dots 
\longleftarrow Y {\mathbin{\widehat{\otimes}}} X_{n-1} \stackrel {\id_Y \otimes d_{n-1}}
 { \longleftarrow} Y {\mathbin{\widehat{\otimes}}}  X_{n} 
 \stackrel {\id_Y \otimes d_{n}} { \longleftarrow} 
Y {\mathbin{\widehat{\otimes}}} X_{n+1}\stackrel {\id_Y \otimes d_{n+1} } 
{ \longleftarrow} Y {\mathbin{\widehat{\otimes}}} X_{n+2}
\longleftarrow  \cdots   
\end{equation}
is  weakly admissible.
\end{lemma}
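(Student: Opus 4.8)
The plan is to reduce the statement to the elementary fact that the covariant functor ${\mathfrak B}(Y, -)$ carries a contractible complex of dual spaces to a contractible complex. First I would unwind the definition: by \cite[III.3.11]{He0} the complex \eqref{weakly-admis-long} is weakly admissible exactly when its dual cochain complex
\[
\cdots \stackrel{d_{n-2}^*}{\longrightarrow} X_{n-1}^* \stackrel{d_{n-1}^*}{\longrightarrow} X_n^* \stackrel{d_n^*}{\longrightarrow} X_{n+1}^* \longrightarrow \cdots
\]
splits in ${\mathcal Ban}$, i.e.\ admits a contracting homotopy consisting of bounded linear maps $s_n : X_n^* \to X_{n-1}^*$ with $d_{n-1}^* s_n + s_{n+1} d_n^* = \id_{X_n^*}$ for every $n$. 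I would fix such a family $(s_n)_n$ at the outset; the task is then to build an analogous homotopy for the dual of \eqref{weakly-admis-Y-otimes-long}.

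The crucial ingredient is the standard isometric identification $(Y \ptp X_n)^* \iso {\mathfrak B}(Y, X_n^*)$ for the projective tensor product, given by $\phi \mapsto T_\phi$ with $(T_\phi y)(x) = \phi(y \otimes x)$. Under it, a direct computation from the definitions shows that the transpose $(\id_Y \otimes d_{n-1})^*$ corresponds to post-composition with $d_{n-1}^*$, since $T_{(\id_Y \otimes d_{n-1})^* \phi}(y) = d_{n-1}^*(T_\phi(y))$. Hence the dual complex of \eqref{weakly-admis-Y-otimes-long} is isometrically isomorphic to the complex
\[
\cdots \longrightarrow {\mathfrak B}(Y, X_{n-1}^*) \longrightarrow {\mathfrak B}(Y, X_n^*) \longrightarrow {\mathfrak B}(Y, X_{n+1}^*) \longrightarrow \cdots
\]
with coboundaries $T \mapsto d_n^* \circ T$, that is, to the complex obtained by applying ${\mathfrak B}(Y, -)$ termwise to $X_\bullet^*$.

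Finally I would transport the homotopy. Setting $S_n : {\mathfrak B}(Y, X_n^*) \to {\mathfrak B}(Y, X_{n-1}^*)$, $S_n(T) = s_n \circ T$, which is bounded with $\|S_n\| \le \|s_n\|$, and post-composing the homotopy identity for $(s_n)$ with $T$ gives
\[
d_{n-1}^* \circ S_n(T) + S_{n+1}(d_n^* \circ T) = (d_{n-1}^* s_n + s_{n+1} d_n^*) \circ T = T .
\]
Thus $(S_n)$ is a contracting homotopy for the dual of \eqref{weakly-admis-Y-otimes-long}, so that complex splits and \eqref{weakly-admis-Y-otimes-long} is weakly admissible. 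I do not expect a genuine obstacle here; the one point needing care is checking that the duality $(Y \ptp X_n)^* \iso {\mathfrak B}(Y, X_n^*)$ is natural enough to intertwine the transposed maps $(\id_Y \otimes d_{n-1})^*$ with post-composition by $d_{n-1}^*$, together with keeping the degree bookkeeping of the two homotopy terms consistent.
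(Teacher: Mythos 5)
Your proposal is correct and is essentially the paper's own argument: the paper transports the contracting homotopy $(s_n)$ of $\X^*$ through the same identification $(Y \ptp X_n)^* \iso {\mathfrak B}(Y, X_n^*)$, defining $\gamma_{n-1}(f) = \Phi_{s_{n-1}\circ \phi_f}$, which is exactly your post-composition operator $S_n$ read back on the dual spaces. The naturality point you flag at the end (that the duality intertwines $(\id_Y \otimes d_{n-1})^*$ with post-composition by $d_{n-1}^*$) is precisely the ``one can check'' verification in the paper's proof, and your computation of it is right.
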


\begin{proof}  By assumption the complex (\ref{weakly-admis-long})
is weakly admissible. Therefore, for every $n$,  there is a bounded linear operator 
\[
s_n :  X_{n+1}^* \to  X_{n}^*, 
\]
such that 
$d_{n-1}^* \circ s_{n-1} + s_{n} \circ d_{n}^* =
\id _{X_{n}^*}$, where
$$ \dots \stackrel {d_{n-2}^*} {\longrightarrow} X_{n-1}^* \stackrel {d_{n-1}^*} { \longrightarrow}  X_{n}^* 
 \stackrel {d_{n}^*} { \longrightarrow} X_{n+1}^* \stackrel {d_{n+1}^*} { \longrightarrow} X_{n+2}^* \longrightarrow  \cdots   $$ 
Further we will use the well-known isomorphism 
\cite[Theorem 2.2.17]{He0}
\[
{\mathfrak B}(Y, X_n^*)  \rightarrow (Y {\mathbin{\widehat{\otimes}}} X_n)^*: \phi \mapsto \Phi_\phi
\]
where $\Phi_\phi(y \otimes x) = [\phi(y)](x); y \in Y, x \in X_n$ and
\[
 (Y {\mathbin{\widehat{\otimes}}} X_n)^* \rightarrow {\mathfrak B}(Y, X_n^*): f \mapsto \phi_f
\]
where $[\phi_ f(y)](x) = f(y \otimes x); \; y \in Y, x \in X_n.$
One can see that, for  $ f \in (Y {\mathbin{\widehat{\otimes}}} X_n)^*  $, we have the following 
$\phi_ f \in {\mathfrak B}(Y, X_n^*)  $,~~
$s_{n-1} \circ \phi_ f \in {\mathfrak B}(Y, X_{n-1}^*) $ and $\Phi_{s_{n-1} \circ \phi_ f } \in 
(Y {\mathbin{\widehat{\otimes}}} X_{n-1})^*  $. We define a map  
\[ 
\gamma_{n-1}:
(Y {\mathbin{\widehat{\otimes}}} X_n)^* \rightarrow (Y {\mathbin{\widehat{\otimes}}} X_{n-1})^*
\] 
by $ \gamma_{n-1}(f)
 = \Phi_{ s_{n-1} \circ \phi_ f}$ for  $ f \in (Y {\mathbin{\widehat{\otimes}}} X_n)^* $.
It is easy to see that  $ \gamma_{n-1}$  is a continuous linear
operator.
One can check that
\[
 \gamma_{n} \circ (\id_Y \otimes d_{n})^*  +  (\id_Y \otimes d_{n-1})^* \circ \gamma_{n-1}  = \id_{(Y {\mathbin{\widehat{\otimes}}} X_{n})^*},
\] 
where
\begin{equation}\label{dual-Y-otimes-long}
\dots \stackrel {(\id_Y \otimes d_{n-2})^*} {
\longrightarrow} (Y {\mathbin{\widehat{\otimes}}} X_{n-1})^* \stackrel {(\id_Y \otimes d_{n-1})^*}
 { \longrightarrow} (Y {\mathbin{\widehat{\otimes}}}  X_{n})^* 
 \stackrel {(\id_Y \otimes d_{n})^*} { \longrightarrow} 
(Y {\mathbin{\widehat{\otimes}}} X_{n+1})^* \longrightarrow \cdots   
\end{equation}

The result follows.
\end{proof}

We shall denote $\gamma_{n-1}$ from the previous  lemma by
$\gamma_{(s,X_n,Y)}$.

\begin{lemma}\label{CommutativeWeakAdmis} Let
\begin{equation}\label{commutative-weakly-admis-long}
\dots \stackrel {d_{n-2}} {
\longleftarrow} X_{n-1} \stackrel {d_{n-1}} { \longleftarrow}  X_{n} 
 \stackrel {d_{n}} { \longleftarrow} X_{n+1} \stackrel {d_{n+1}} 
{ \longleftarrow} X_{n+2}
\longleftarrow  \cdots  
\end{equation}
be a weakly admissible complex of Banach spaces and continuous operators. 
 Then,  for Banach spaces $Y$ and $Z$ and a bounded linear operator $\delta: Y \to Z$,  the following diagram
\begin{equation}\label{commutative-weakly-admis-Y-otimes-long}
\begin{array}{ccccccccc}
\dots & 
\stackrel{\gamma_{(s,X_{n-1},Z)}}{\longleftarrow} 
& (Z {\mathbin{\widehat{\otimes}}} X_{n-1})^* & 
\stackrel{\gamma_{(s,X_{n},Z)}}{\longleftarrow} &
(Z {\mathbin{\widehat{\otimes}}}  X_{n})^* &
\stackrel {\gamma_{(s,X_{n+1},Z)}} {\longleftarrow} &
(Z {\mathbin{\widehat{\otimes}}} X_{n+1})^* &\cdots   \\

\dots & ~ &
\vcenter{\llap{$\scriptstyle{(\delta \otimes \id_{X_{n-1}})^*}~~$}}{\downarrow}  & ~ &
\vcenter{\llap{$\scriptstyle{(\delta \otimes \id_{X_{n}})^*}~~$}}{\downarrow}  & ~ &
\vcenter{\llap{$\scriptstyle{(\delta \otimes \id_{X_{n+1}})^*}~~$}}{\downarrow} \\

\dots &
\stackrel {\gamma_{(s,X_{n-1},Y)}} { \longleftarrow} &
(Y {\mathbin{\widehat{\otimes}}} X_{n-1})^* &
\stackrel {\gamma_{(s,X_{n},Y)}} { \longleftarrow}  &(Y {\mathbin{\widehat{\otimes}}}  X_{n})^* &
\stackrel {\gamma_{(s,X_{n+1},Y)}} { \longleftarrow} &
(Y {\mathbin{\widehat{\otimes}}} X_{n+1})^* &\cdots   \\
\end{array}
\end{equation}
is commutative.
\end{lemma}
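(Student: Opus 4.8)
The plan is to establish commutativity of the diagram square by square, by a direct computation on elementary tensors. Fix $n$ and consider the square with corners $(Z \ptp X_n)^*$, $(Z \ptp X_{n-1})^*$, $(Y \ptp X_n)^*$ and $(Y \ptp X_{n-1})^*$; I must show that $(\delta \otimes \id_{X_{n-1}})^* \circ \gamma_{(s,X_n,Z)} = \gamma_{(s,X_n,Y)} \circ (\delta \otimes \id_{X_n})^*$ as maps from $(Z \ptp X_n)^*$ to $(Y \ptp X_{n-1})^*$. Since both composites are continuous and linear, it suffices to check that they agree as functionals on elementary tensors $y \otimes x$ with $y \in Y$ and $x \in X_{n-1}$, because such tensors span a dense subspace of $Y \ptp X_{n-1}$.

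The key point, and the source of the commutativity, is the naturality in the first variable of the isomorphism ${\mathfrak B}(Y, X_n^*) \iso (Y \ptp X_n)^*$ recalled in the proof of Lemma \ref{WeakAdmis}, together with the fact that the homotopy operators $s_{n-1}: X_n^* \to X_{n-1}^*$ are intrinsic to the complex (\ref{commutative-weakly-admis-long}) and do not depend on $Y$ or $Z$. Concretely, given $g \in (Z \ptp X_n)^*$, I set $h = (\delta \otimes \id_{X_n})^* g \in (Y \ptp X_n)^*$. Then for $y \in Y$ and $x' \in X_n$ one has $[\phi_h(y)](x') = h(y \otimes x') = g(\delta(y) \otimes x') = [\phi_g(\delta(y))](x')$, so that $\phi_h = \phi_g \circ \delta$ as elements of ${\mathfrak B}(Y, X_n^*)$. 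This identity is the whole of the argument; it encodes that $(\delta \otimes \id_{X_n})^*$ corresponds, under the identification, to precomposition with $\delta$.

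It then remains to expand both composites on $y \otimes x$. For the route ``down then left'', $\gamma_{(s,X_n,Y)}(h) = \Phi_{s_{n-1} \circ \phi_h}$, whence $[\gamma_{(s,X_n,Y)}(h)](y \otimes x) = [s_{n-1}(\phi_h(y))](x) = [s_{n-1}(\phi_g(\delta(y)))](x)$, using the naturality identity. For the route ``left then down'', $\gamma_{(s,X_n,Z)}(g) = \Phi_{s_{n-1} \circ \phi_g}$, and applying $(\delta \otimes \id_{X_{n-1}})^*$ gives $[(\delta \otimes \id_{X_{n-1}})^* \Phi_{s_{n-1} \circ \phi_g}](y \otimes x) = \Phi_{s_{n-1} \circ \phi_g}(\delta(y) \otimes x) = [s_{n-1}(\phi_g(\delta(y)))](x)$. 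The two expressions coincide, so the square commutes; since $n$ was arbitrary, the whole diagram (\ref{commutative-weakly-admis-Y-otimes-long}) is commutative.

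I do not anticipate any genuine obstacle here: there is no analytic subtlety, and the only care required is to keep straight the two mutually inverse identifications $\phi \mapsto \Phi_\phi$ and $f \mapsto \phi_f$ and the order in which the transpose maps are applied. The single substantive step is the naturality identity $\phi_h = \phi_g \circ \delta$, from which the equality of the two composites is immediate.
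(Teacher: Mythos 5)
Your proof is correct and follows essentially the same route as the paper's: both verify the square on elementary tensors $y \otimes x$ via the identification $(Y \ptp X_n)^* \iso {\mathfrak B}(Y, X_n^*)$, with the naturality identity $\phi_{(\delta \otimes \id_{X_n})^* f} = \phi_f \circ \delta$ (and the $Y$-, $Z$-independence of the homotopy $s_{n-1}$) as the single substantive step. Your explicit remark that density of elementary tensors plus continuity suffices is a welcome clarification that the paper leaves implicit.
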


\begin{proof} We shall show that for every $ f \in (Z {\mathbin{\widehat{\otimes}}} X_n)^* $,
\[ (\delta \otimes \id_{X_{n-1}})^* \circ \gamma_{(s,X_{n},Z)}=
\gamma_{(s,X_{n},Y)} \circ (\delta \otimes \id_{X_{n}})^*.
\]
We shall use notations from the previous lemma.
Note that for every $x \in X_{n-1}$, $ y \in Y$,
\[
[\phi_ f (\delta y)] (x)=
f (\delta y \otimes x)
\]
and
\[
[\phi_{f \circ (\delta \otimes \id_{X_{n-1}})}(y)](x)=
f \circ (\delta \otimes \id_{X_{n-1}}) (y \otimes x) = f (\delta y \otimes x).
\]

Thus, for all  $ y \in Y$,
\[ [\phi_ f (\delta y )] = [\phi_{f \circ (\delta \otimes \id_{X_{n-1}})}](y)
\]
and 
\[ [s_{n-1} \circ \phi_ f ](\delta y ) = [s_{n-1} \circ \phi_{f \circ (\delta \otimes \id_{X_{n-1}})}](y) = [s_{n-1} \circ \phi_{(\delta \otimes \id_{X_{n-1}})^*f}](y) .
\]
Therefore

\[ [(\delta \otimes \id_{X_{n-1}})^* \circ \gamma_{(s,X_{n},Z)} (f)](y \otimes x)= [\gamma_{(s,X_{n},Z)} (f)](\delta y \otimes x)
\]
\[ = \Phi_{s_{n-1} \circ \phi_ f}(\delta y \otimes x) = [s_{n-1} \circ \phi_ f](\delta y)(x)
\]
and
\[ [\gamma_{(s,X_{n},Y)} \circ (\delta \otimes \id_{X_{n}})^* (f)]( y \otimes x) = \Phi_{s_{n-1} \circ (\delta \otimes \id_{X_{n}})^* (f)}(y \otimes x)
\]
\[
= [s_{n-1} \circ \phi_{(\delta \otimes \id_{X_{n-1}})^* f}(y)](x)
= [s_{n-1} \circ \phi_ f ](\delta y )(x).
\]
\end{proof}

\begin{proposition}\label{tensor-product-w-a-resol} 
Suppose that  complexes of Banach spaces and continuous linear operators
\[
0  \longleftarrow X
\stackrel {\varepsilon_1} { \longleftarrow}  X_0 
 \stackrel {d_0} { \longleftarrow} X_1 \stackrel {d_1} 
{ \longleftarrow} X_2 \longleftarrow  \dots  \hspace*{1.5cm}
\hfill {(0 \leftarrow X 
\stackrel{\varepsilon_1}{\longleftarrow}{\mathcal X})} 
\]
and
\[
0  \longleftarrow Y
\stackrel {\varepsilon_2} { \longleftarrow}  Y_0 
 \stackrel {\tilde{d}_0} { \longleftarrow} Y_1 \stackrel {\tilde{d}_1} 
{ \longleftarrow} Y_2
\longleftarrow  \dots  \hspace*{1.5cm}
\hfill {(0 \leftarrow Y 
\stackrel{\varepsilon_2}{\longleftarrow}{\mathcal Y})} 
\]
are weakly admissible. Then
the complex ${0 \leftarrow X \hat{\otimes} Y \stackrel
{\varepsilon_1\otimes\varepsilon_2}{\longleftarrow} 
{\mathcal X} \hat{\otimes }{\mathcal Y}}$
is also weakly admissible.
\end{proposition}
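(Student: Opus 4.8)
The statement to prove is that weak admissibility is preserved under the tensor product of Definition \ref{XptpY}. Since a complex is weakly admissible exactly when its dual complex admits a bounded contracting homotopy, the plan is to build such a homotopy for the dual of $0\leftarrow X\ptp Y\stackrel{\varepsilon_1\otimes\varepsilon_2}{\longleftarrow}\X\ptp\Y$. I would regard $\X\ptp\Y$ as the total complex of the first-quadrant bicomplex with $(p,q)$-entry $X_p\ptp Y_q$ and the differential of Definition \ref{XptpY}, and assemble the homotopy from the homotopies produced by the two preceding lemmas, resolving one tensor factor at a time.

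First I would resolve the $\Y$-variable. For each fixed $p\ge 0$, tensoring the weakly admissible complex $0\leftarrow Y\stackrel{\varepsilon_2}{\longleftarrow}\Y$ by the Banach space $X_p$ and invoking Lemma \ref{WeakAdmis} shows that the augmented row
\[
0\leftarrow X_p\ptp Y\leftarrow X_p\ptp Y_0\leftarrow X_p\ptp Y_1\leftarrow\cdots
\]
is weakly admissible; write $\gamma^{(p)}$ for the resulting contraction of its dual, i.e.\ the operators $\gamma_{(s,Y_q,X_p)}$ of Lemma \ref{WeakAdmis}. Applying Lemma \ref{CommutativeWeakAdmis} with the bounded operators $d_{p-1}\colon X_p\to X_{p-1}$ coming from $\X$ then shows that the $\gamma^{(p)}$ are natural in $p$: they commute with the duals of the horizontal differentials $d_{p-1}\otimes\id_{Y_q}$.

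The core step is a double-complex computation in which these row contractions are glued together. Splitting the dual of the total differential into a horizontal part $(d\otimes\id)^*$ and a vertical part $\pm(\id\otimes\tilde d)^*$, and defining the candidate homotopy $\Sigma$ summand-by-summand as a signed copy of $\gamma^{(p)}$, the off-diagonal contributions of $\Sigma(d^*)+(d^*)\Sigma$ cancel precisely by the commutation of Lemma \ref{CommutativeWeakAdmis}, while the diagonal contributions add up to the identity by the homotopy relation of Lemma \ref{WeakAdmis}; the signs $(-1)^m$ of Definition \ref{XptpY} are absorbed into the definition of $\Sigma$. This exhibits $\Sigma$ as a contraction of the dual of the total complex augmented along the column $\{X_p\ptp Y\}_{p\ge 0}$, that is, it shows that $\id\otimes\varepsilon_2$ induces a homotopy equivalence of the dual complexes of $\X\ptp\Y$ and $\X\ptp Y$. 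Finally, by Lemma \ref{WeakAdmis} again (and the symmetry of $\ptp$), the complex $0\leftarrow X\ptp Y\stackrel{\varepsilon_1\otimes\id}{\longleftarrow}\X\ptp Y$ is weakly admissible, being $\X$ tensored by $Y$; composing the two dual homotopy equivalences and using $(\varepsilon_1\otimes\id)\circ(\id\otimes\varepsilon_2)=\varepsilon_1\otimes\varepsilon_2$ produces the required contraction for the dual of $0\leftarrow X\ptp Y\leftarrow\X\ptp\Y$.

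I expect the main obstacle to be the augmentation corner. In the statement the single map $\varepsilon_1\otimes\varepsilon_2$ plays the role of the two separate edge maps of the fully augmented bicomplex, so the naive summand-wise $\gamma$-homotopy — which in fact contracts the dual of the fully augmented tensor product, the one carrying all the extra terms $X\ptp Y_q$ and $X_p\ptp Y$ — does \emph{not} restrict to the complex in the statement, since it tries to map into edge terms that are absent there. Routing through the intermediate complex $\X\ptp Y$ and composing, as above, is what circumvents this; equivalently one may build a single tensor-of-contractions homotopy carrying a correction term supported at the base, whose verification in the lowest degrees (around $X_0\ptp Y_0$ and $X\ptp Y$, where one invokes the splitting relations for the two augmentations) is the one genuinely non-formal check. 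The careful sign bookkeeping in the gluing of the $\gamma^{(p)}$ is the remaining technical point.
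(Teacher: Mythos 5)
Your proposal is correct and takes essentially the same route as the paper's own proof: the paper likewise applies Lemma \ref{WeakAdmis} in each tensor factor to get the summand-wise operators $\gamma_{(s,X_n,Y_p)}$ and $\tilde\gamma_{(t,X_p,Y_n)}$, uses Lemma \ref{CommutativeWeakAdmis} to turn them into chain homotopies on $({\X}\ptp{\Y})^*$, and composes two homotopies (via MacLane's product-homotopy proposition) so that the identity becomes homotopic to a morphism supported at the corner $(X_0\ptp Y_0)^*$, where the relation $\gamma_{(s,X_0,Y)}\circ\tilde\gamma_{(t,X_0,Y_0)}\circ(\varepsilon_1\otimes\varepsilon_2)^*=\id_{(X\ptp Y)^*}$ closes the argument --- exactly your ``augmentation corner'' point. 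The only difference is organizational: where you factor through the intermediate complex ${\X}\ptp Y$ and contract its augmented dual directly by Lemma \ref{WeakAdmis}, the paper runs the second collapse as a second homotopy on the full complex $({\X}\ptp{\Y})^*$; unwinding your composition of the homotopy equivalence with that contraction yields the same final splitting operators.
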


\begin{proof}  By assumption the complexes
$ 0 \leftarrow X \stackrel{\varepsilon_1}{\longleftarrow}{\mathcal X}$ and $ 0 \leftarrow Y \stackrel{\varepsilon_2}{\longleftarrow} {\mathcal Y}$ are weakly admissible.
 Therefore, for every $n$,  there is a bounded linear operator 
\[
s_n :  X_{n+1}^* \to  X_{n}^*
\]
such that 
\[
d_{n-1}^* \circ s_{n-1} + s_{n} \circ d_{n}^* =
\id _{X_{n}^*},\;\; \varepsilon_1^* \circ s_{-1} + s_{0} \circ d_{0}^* =
\id _{X_{0}^*} \;\; \text{\rm and} \;\;  s_{-1} \circ \varepsilon_1^*  =
\id _{X^*}.
\]
Similarly,
 for every $n$,  there is a bounded linear operator 
\[
t_n :  Y_{n+1}^* \to  Y_{n}^*, 
\]
such that 
\[
\tilde{d}_{n-1}^* \circ t_{n-1} + t_{n} \circ \tilde{d}_{n}^* =
\id _{Y_{n}^*}, \;\; \varepsilon_2^* \circ t_{-1} + t_{0} \circ \tilde{d}_{0}^* =
\id _{Y_{0}^*} \;\; \text{\rm and} \;\;  t_{-1} \circ \varepsilon_2^*  =
\id _{Y^*}.
\]
Let us show that the complex 
\[
0  \longleftarrow X \hat{\otimes} Y
\stackrel {\varepsilon_1\otimes\varepsilon_2} { \longleftarrow}  
(\mathcal X \hat{\otimes}{\mathcal Y})_0 
 \stackrel {\delta_0} { \longleftarrow} (\mathcal X \hat{\otimes}{\mathcal Y})_1 \stackrel {\delta_1} 
{ \longleftarrow} (\mathcal X \hat{\otimes}{\mathcal Y})_2
\longleftarrow  \dots  
\]
is weakly admissible too. Recall that
\[(\mathcal X {\mathbin{\widehat{\otimes}}}{\mathcal Y})_n=  \displaystyle{\bigoplus_{m+q=n}} X_m \ptp Y_q
\]  
and  for each $x\otimes y \in X_m \ptp Y_q$, 
\[\; \delta_n (x \otimes y)= d_{m-1}(x) \otimes y + (-1)^m x \otimes \tilde{d}_{q-1}(y).\]

By virtue of Lemma \ref{WeakAdmis}, for $Y$ and $Y_p$, $p \ge 0$, the sequence
\[
0  \longrightarrow (X \hat{\otimes} Y_p)^*
\stackrel {(\varepsilon_1 \otimes \id_{Y_p})^*} { \longrightarrow}  
(X_0 \hat{\otimes} Y_p)^* 
 \stackrel {(d_{0} \otimes \id_{Y_p})^*} { \longrightarrow} 
(X_1 \hat{\otimes} Y_p)^* 
 \stackrel {(d_{1} \otimes \id_{Y_p})^*} { \longrightarrow} 
(X_2 \hat{\otimes} Y_p)^* 
 \cdots   \]
splits as a complex of Banach spaces, so that there exist bounded linear operators
\[
\gamma_{(s, X_n,Y_p)}:
(X_n \hat{\otimes} Y_p)^* \rightarrow (X_{n-1} \hat{\otimes} Y_p)^*
\]
such that
\begin{eqnarray}
\label{gamma-of-s}
\nonumber \; \; \gamma_{(s, X_{n+1},Y_p)} \circ (d_{n} \otimes \id_{Y_p})^*  +  ( d_{n-1}\otimes \id_{Y_p})^* \circ  \gamma_{(s, X_{n},Y_p)} = \id_{(X_{n} \hat{\otimes} Y_p)^*}\;\\
\;\; (\varepsilon_1 \otimes \id_{Y_p})^* \circ 
\gamma_{(s, X_{0},Y_p)} + \gamma_{(s, X_{1},Y_p)} \circ (d_{0} \otimes \id_{Y_p})^* = \id _{(X_{0} \hat{\otimes} Y_p)^*} \;\; \text{\rm and} \\
\nonumber \;\;  \gamma_{(s, X_{0},Y_p)} \circ (\varepsilon_1 \otimes \id_{Y_p})^*  = \id _{(X \hat{\otimes} Y_p)^*}.
\end{eqnarray}
Similarly, by virtue of Lemma \ref{WeakAdmis}, for $X$ and $X_p$, $p \ge 0$, the sequence
\[
0  \longrightarrow (X_p \hat{\otimes} Y)^*
\stackrel {( \id_{X_p} \otimes \varepsilon_2)^*} { \longrightarrow}  
(X_p \hat{\otimes} Y_0)^* 
 \stackrel {(\id_{X_p} \otimes  \tilde{d}_{0} )^*} { \longrightarrow} 
(X_p \hat{\otimes} Y_1)^* 
 \stackrel {(\id_{X_p} \otimes  \tilde{d}_{1} )^*} { \longrightarrow} 
(X_p \hat{\otimes} Y_2)^* 
 \cdots   \]
splits as a complex of Banach spaces, so that there exist bounded linear operators
\[ 
\tilde{\gamma}_{(t, X_{p},Y_n)}:
(X_p \hat{\otimes} Y_n)^* \rightarrow (X_{p} \hat{\otimes} Y_{n-1})^*
\] 
such that
\begin{eqnarray}
\label{tilde-gamma-of-t}
\nonumber \; \tilde{\gamma}_{(t, X_{p},Y_{n+1})} \circ (\id_{X_p} \otimes \tilde{d}_{n})^*  +  (\id_{X_p} \otimes \tilde{d}_{n-1})^* \circ \tilde{\gamma}_{(t, X_{p},Y_{n})} = \id_{(X_{p} \hat{\otimes} Y_n)^*},\;\\
\;\; (\id_{X_p} \otimes \varepsilon_2)^* \circ \tilde{\gamma}_{(s, X_{p},Y_0)} + \tilde{\gamma}_{(s, X_{p},Y_1)} \circ 
(\id_{X_p} \otimes \tilde{d}_{0})^*  = \id _{(X_p \hat{\otimes}  Y_0)^*} \;\; \text{\rm and} \\
\nonumber\; \tilde{\gamma}_{(s, X_{p},Y_0)} \circ ( \id_{X_p} \otimes \varepsilon_2 )^*  = \id _{(X_p \hat{\otimes} Y)^*}.
\end{eqnarray}

Consider the two morphisms 
$\id_{({\mathcal X} \hat{\otimes }{\mathcal Y})^*}$ and 
$(\varepsilon_1 \otimes \id_{\mathcal Y})^* \circ  \gamma_{(s, X_{0},{\mathcal Y})} $ of the complex $({\mathcal X} \hat{\otimes }{\mathcal Y})^*.$
Here $(\varepsilon_1 \otimes \id_{\mathcal Y})^* \circ  \gamma_{(s, X_{0},{\mathcal Y})} $ is trivial on $(X_m \ptp Y_p)^*$ for $m \ge 1$ and
equal to $(\varepsilon_1 \otimes \id_{Y_p})^* \circ  \gamma_{(s, X_{0},Y_p)} $ on $(X_0 \ptp Y_p)^*$.
By virtue of Lemma \ref{CommutativeWeakAdmis} and equalities \ref{gamma-of-s}, $\gamma_{(s, {\mathcal X},{\mathcal Y})}$ which is equal to 
$\gamma_{(s, X_{p},Y_n)}$ on $(X_p \ptp Y_n)^*$
is a homotopy between $\id_{({\mathcal X} \hat{\otimes }{\mathcal Y})^*}$ and $(\varepsilon_1 \otimes \id_{\mathcal Y})^* \circ  \gamma_{(s, X_{0},{\mathcal Y})} $, so
\[
\gamma_{(s, {\mathcal X},{\mathcal Y})}: \id_{({\mathcal X} \hat{\otimes }{\mathcal Y})^*} \cong (\varepsilon_1 \otimes \id_{\mathcal Y})^* \circ  \gamma_{(s, X_{0},{\mathcal Y})} : ({\mathcal X} \hat{\otimes }{\mathcal Y})^* \to ({\mathcal X} \hat{\otimes }{\mathcal Y})^*.
\]
Similarly, by virtue of 
Lemma \ref{CommutativeWeakAdmis} and equalities \ref{tilde-gamma-of-t},
$\tilde{\gamma}_{(t, {\mathcal X},{\mathcal Y})}$ which is equal to
$\tilde{\gamma}_{(t, X_{p},Y_n)}$ on $(X_p \ptp Y_n)^*$
is a homotopy between $\id_{({\mathcal X} \hat{\otimes }{\mathcal Y})^*}$ and $
( \id_{\mathcal X} \otimes \varepsilon_2 )^* \circ \tilde{\gamma}_{(t, {\mathcal X},Y_0)}$, so
\[
\tilde{\gamma}_{(t, {\mathcal X},{\mathcal Y})}: \id_{({\mathcal X} \hat{\otimes }{\mathcal Y})^*} \cong 
( \id_{\mathcal X} \otimes \varepsilon_2)^* \circ \tilde{\gamma}_{(t, {\mathcal X}, Y_0)}: ({\mathcal X} \hat{\otimes }{\mathcal Y})^* \to ({\mathcal X} \hat{\otimes }{\mathcal Y})^*.
\] 
Here 
$( \id_{\mathcal X} \otimes \varepsilon_2)^* \circ \tilde{\gamma}_{(t, {\mathcal X}, Y_0)}$ is trivial on $(X_m \ptp Y_q)^*$ for $q \ge 1$ and
equal to $(\id_{X_m} \otimes \varepsilon_2)^* \circ \tilde{\gamma}_{(t, {X_m}, Y_0)}$ on $(X_m \ptp Y_0)^*$.
Therefore, by \cite[Proposition II.2.3]{Mac}, there exists a product homotopy
\[
\tilde{\gamma}_{(t, {\mathcal X},{\mathcal Y})} + \gamma_{(s, {\mathcal X},{\mathcal Y})} \circ ( \id_{\mathcal X} \otimes \varepsilon_2)^* \circ \tilde{\gamma}_{(t, {\mathcal X}, Y_0)}
\]
between morphisms 
$\id_{({\mathcal X} \hat{\otimes }{\mathcal Y})^*}$
and 
$(\varepsilon_1 \otimes \id_{\mathcal Y})^* \circ  \gamma_{(s, X_{0},{\mathcal Y})} \circ ( \id_{\mathcal X} \otimes \varepsilon_2)^* \circ \tilde{\gamma}_{(t, {\mathcal X}, Y_0)}$
 of the complex $ ({\mathcal X} \hat{\otimes }{\mathcal Y})^*$.
One can check that 
$$(\varepsilon_1 \otimes \id_{\mathcal Y})^* \circ  \gamma_{(s, X_{0},{\mathcal Y})} \circ ( \id_{\mathcal X} \otimes \varepsilon_2)^* \circ \tilde{\gamma}_{(t, {\mathcal X}, Y_0)}$$
is trivial on $({\mathcal X} \hat{\otimes }{\mathcal Y})^*_n$ for $n \ge 1$ and
equal to 
$(\varepsilon_1 \otimes \varepsilon_2)^* \circ  \gamma_{(s, X_{0},Y)} \circ \tilde{\gamma}_{(t, X_0, Y_0)}$ on $(X_0 \hat{\otimes} Y_0)^*$.
Note that 
\[
\gamma_{(s, X_{0},Y)} \circ \tilde{\gamma}_{(t, X_0, Y_0)} \circ
(\varepsilon_1 \otimes \varepsilon_2)^* = \id_{(X \hat{\otimes } Y)^*}.
\]
Thus the dual complex $0 \rightarrow (X \hat{\otimes} Y)^* \stackrel{(\varepsilon_1\otimes\varepsilon_2)^*}{\longrightarrow} 
({\mathcal X} \hat{\otimes }{\mathcal Y})^*$
splits as a complex of Banach spaces.
\end{proof} 

The following lemma is  essentially  \cite[Lemma 3.6]{LW}.

\begin{lemma}\label{pseudo-resol-Abai} Let $\A$ be a Banach algebra and let  $I$  be a closed two-sided ideal of $\A_+$.
 Suppose that one of the following conditions is satisfied:

{\rm (i)} $I$ is flat in $\A$-$\mod$ and has a  left bounded approximate identity  $(e_{\alpha})_{ \alpha \in \Lambda}$; 

{\rm (ii)} $I$ is flat in $\mod$-$\A$ and has a right bounded approximate identity $(e_{\alpha})_{ \alpha \in \Lambda}$; 

{\rm (iii)} $I$ has a  bounded approximate identity 
 $(e_{\alpha})_{ \alpha \in \Lambda}$. 

Then the sequence 
\begin{equation}
\label{psres}
 0 \longleftarrow I \stackrel {\varepsilon} {\longleftarrow} 
 I {\mathbin{\widehat{\otimes}}} I 
 \stackrel {d_0} {\longleftarrow} I {\mathbin{\widehat{\otimes}}} I {\mathbin{\widehat{\otimes}}} 
I {\longleftarrow}
\dots   {\longleftarrow} I^{ {\mathbin{\widehat{\otimes}}}(n+2)}
\stackrel {d_n} {\longleftarrow}  
I^{ {\mathbin{\widehat{\otimes}}}(n+3)} \longleftarrow \dots,
\end{equation}
where $\varepsilon(b_0 \otimes b_1) = b_0 b_1$ and
\[
d_n (b_0 \otimes b_1 \otimes b_2 \otimes \dots \otimes b_{n+2} ) = 
 \sum_{i=0}^{n+1} (-1)^{i} 
( b_0 \otimes b_1 \otimes \dots \otimes b_i b_{i+1} \otimes 
\dots \otimes b_{n+2}),
\]
is a pseudo-resolution of $I$ in  $\A$-$\essmod$-$\A$ such that all modules $I^{ {\mathbin{\widehat{\otimes}}}(n)}$, $n \ge 2$, are flat in $\A$-$\mod$-$\A$.
\end{lemma}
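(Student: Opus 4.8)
The plan is to establish the two ingredients of a flat pseudo-resolution separately: weak admissibility of the augmented complex (\ref{psres}), and flatness in $\A$-$\mod$-$\A$ of each $I^{\ptp n}$ with $n\ge 2$. First I record that a one- or two-sided b.a.i.\ yields $I=\overline{I^2}$, so $I$ is an essential $\A$-bimodule; consequently every $I^{\ptp n}$ with $n\ge 2$ is an essential bimodule and (\ref{psres}) is a complex in $\A$-$\essmod$-$\A$, the maps $\varepsilon$ and $d_n$ being bimodule morphisms.

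For weak admissibility I build an \emph{approximate} contracting homotopy from the b.a.i.\ and then pass to a weak-$*$ limit on the dual. Assuming a left b.a.i.\ $(e_\alpha)$ (cases (i) and (iii)), set $s_{-1}(b)=e_\alpha\otimes b$ and $s_n(b_0\otimes\cdots\otimes b_{n+1})=e_\alpha\otimes b_0\otimes\cdots\otimes b_{n+1}$. The interior terms telescope, leaving
\[
(\varepsilon s_{-1})(b)=e_\alpha b,\qquad (d_n s_n+s_{n-1}d_{n-1})(b_0\otimes\cdots\otimes b_{n+1})=(e_\alpha b_0)\otimes b_1\otimes\cdots\otimes b_{n+1},
\]
so these composites tend to the identity in the strong operator topology since $e_\alpha b_0\to b_0$. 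Dualising, the operators $s_n^*$ are uniformly bounded and $d^*s^*+s^*d^*=T_\alpha^*\to\id$ in the weak-$*$ operator topology. Using the isometric identification ${\mathfrak B}\bigl((I^{\ptp(n+3)})^*,(I^{\ptp(n+2)})^*\bigr)\iso\bigl((I^{\ptp(n+3)})^*\ptp I^{\ptp(n+2)}\bigr)^*$ together with Banach--Alaoglu, I extract a weak-$*$ cluster point $\sigma$ of the net $(s^*)$; as composition with the fixed maps $d^*$ is weak-$*$ continuous, the relation passes to the limit to give $d^*\sigma+\sigma d^*=\id$ on the dual complex, so (\ref{psres}) is weakly admissible. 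With a right b.a.i.\ (cases (ii) and (iii)) one inserts $e_\alpha$ in the last slot instead.

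For flatness, write $I^{\ptp n}=I\ptp I^{\ptp(n-2)}\ptp I$ ($n\ge 2$) and read its outer bimodule structure as the external tensor product, over $\A^e=\A_+\ptp\A_+^{op}$, of the left $\A_+$-module $I$ (first factor) and the right $\A_+$-module $I$ (last factor), with the Banach space $E=I^{\ptp(n-2)}$ inserted in the middle. Since the external tensor product of a flat left module and a flat right module is flat in $\A$-$\mod$-$\A$, and tensoring with the Banach space $E$ preserves flatness, it suffices to show $I$ is flat both in $\A$-$\mod$ and in $\mod$-$\A$ (computed over $\A_+$, as $I$ is essential). The decisive point is that a one-sided b.a.i.\ already forces flatness on the \emph{opposite} side: the maps $b\mapsto b\otimes e_\alpha:I\to\A_+\ptp I$ and $b\mapsto e_\alpha\otimes b:I\to I\ptp\A_+$ are module sections of the respective multiplication maps up to the errors $be_\alpha-b$ and $e_\alpha b-b$, so exactly as above a weak-$*$ cluster point exhibits $I$ as a weak retract of the projective modules $\A_+\ptp I$ and $I\ptp\A_+$, whence $I$ is flat. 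Thus a right b.a.i.\ gives left-flatness and a left b.a.i.\ gives right-flatness; combining this with the explicit flatness hypothesis in (i) and (ii), and using both b.a.i.'s in (iii), yields left- and right-flatness of $I$ in every case.

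I expect the main obstacle to be the passage to the weak-$*$ limit, which is precisely why one obtains only a pseudo-resolution and not an admissible resolution: the approximate identity supplies only an approximate homotopy, and the identity can be made exact only after dualising, where bounded sets are weak-$*$ compact and the residual error $e_\alpha b_0-b_0$ dies against a fixed functional. Two technical checks carry the weight here: that the limiting homotopy is bounded and that it satisfies the splitting identity on \emph{all} levels simultaneously rather than level by level along different subnets; the latter is handled by taking the net $(s_\alpha)=\bigl((s_\alpha^n)_{n\ge -1}\bigr)$ in the product of the weak-$*$ compact operator balls and invoking Tychonoff to obtain a single cluster point. The flatness half is comparatively routine once the retract-via-approximate-identity observation is in place, the hypotheses (i)--(iii) being tuned so that both one-sided flatnesses of $I$ are always available.
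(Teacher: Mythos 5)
Your weak-admissibility argument is, in substance, the paper's own. The paper defines the contracting homotopy of the dual complex directly by $s_n(f)(u)=\lim_{\alpha\to U}f(e_\alpha\otimes u)$, where $U$ is an ultrafilter refining the order filter on $\Lambda$; that is exactly the weak-$*$ cluster point you extract from the adjoints of $u\mapsto e_\alpha\otimes u$, your Tychonoff/product-of-balls device doing the job the single ultrafilter does there (one coherent limit at all levels simultaneously). Your passage to the limit is sound because pre- and post-composition with the fixed maps $d_n^*$ are pointwise weak-$*$ continuous, $d_n^*$ being itself an adjoint. So this half differs from the paper only in packaging.

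The flatness half is where you diverge, and where there is a genuine gap. You prove, correctly, that $I$ is flat in $\A$-$\mod$ and in $\mod$-$\A$ (your weak-retract construction is in effect a reproof of one direction of \cite[Theorem VII.1.5]{He0}), and you then invoke, as known, that a flat left module tensored with a Banach space and a flat right module is flat in $\A$-$\mod$-$\A$. With mere flatness of the factors this is not the standard citable result: the standard external-product statement, \cite[Proposition VII.2.4]{He0} --- which is precisely what the paper uses at this point --- assumes the one-sided modules are \emph{strictly} flat, because after tensoring an admissible complex of bimodules with the first factor one obtains a complex that is exact but in general no longer admissible, and only strict flatness of the second factor keeps it exact at the next stage. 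Your retract argument yields flatness (injectivity of $I^*$) but not strict flatness, so VII.2.4 does not apply as you stand. The stronger claim you need (flat $\ptp$ Banach space $\ptp$ flat implies flat as a bimodule) is in fact true, but it requires a duality proof --- composing the two dual retractions to exhibit $(I^{\ptp n})^*$ as a bimodule retract of the injective module $(\A_+\ptp I^{\ptp n}\ptp\A_+)^*$, in the style of the paper's Proposition \ref{XptpYflatAptpBbimod} --- which you neither supply nor can cite. The paper sidesteps the issue: for closed ideals, \cite[Theorem VII.1.5]{He0} upgrades each of the hypotheses (i)--(iii) to \emph{strict} flatness of $I$ as a left and as a right $\A$-module, after which \cite[Proposition VII.2.4]{He0} applies verbatim. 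To repair your proof, either quote VII.1.5 to obtain strictness, or prove the merely-flat external-product claim by the retract argument; as written, that step is unsupported.
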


\begin{proof} It is easy to check that $ d_{n-1} \circ d_n = 0$
for $n \ge 1$ and $\varepsilon \circ d_0 = 0$. Thus (\ref{psres}) is a
complex. By  \cite[Theorem VII.1.5]{He0}, $I$ is strictly flat as a left and as a right Banach $\A$-module. Hence, by \cite[Proposition VII.2.4]{He0}, for any $n \ge 2$, the Banach  $\A$-bimodule $I^{ {\mathbin{\widehat{\otimes}}} n}$ is flat in $\A$-$\mod$-$\A$. Note that $I^{ {\mathbin{\widehat{\otimes}}} n}$ is an essential Banach  $\A$-bimodule since  $I$ has a left or right  bounded approximate identity.

We consider the case when $I$ is flat in $\A$-$\mod$ and has a  left bounded approximate identity  $(e_{\alpha})_{ \alpha \in \Lambda}$.
Now we have to show that the dual complex 

\vspace*{0.2cm}
$ 0 \longrightarrow I^* \stackrel {\varepsilon^*} {\longrightarrow}  
(I {\mathbin{\widehat{\otimes}}} I)^* 
 \stackrel {d_0^*} {\longrightarrow} (I {\mathbin{\widehat{\otimes}}} I {\mathbin{\widehat{\otimes}}} I)^* 
{\longrightarrow}
\dots $
\begin{equation}
\label{dpsres}
\hspace*{2.0cm} \dots {\longrightarrow}  (I^{ {\mathbin{\widehat{\otimes}}}(n+2)})^* 
\stackrel {d_n^*} {\longrightarrow} (I^{ {\mathbin{\widehat{\otimes}}}(n+3)})^* 
\longrightarrow \dots
\end{equation}
is admissible. 

   Consider the Fr${\rm \acute{e}}$chet filter $F$ on $\Lambda$, with 
 base $ \{ Q_{\lambda}: 
\lambda \in \Lambda \}$, where 
$Q_{\lambda} = \{ \alpha \in \Lambda : \alpha \ge \lambda \}.$ Thus
\newline
\[
 F = 
\{ E \subset \Lambda : \;  {\rm there \; is \; a} \; \lambda  \in \Lambda \; {\rm such \; that } \; 
 Q_{\lambda} \subset E \}.
\]
\noindent Let $U$ be an ultrafilter on $ \Lambda $ which refines $F$. One can find information on 
 filters in \cite{Bo}. 

 For $n \ge -1$ and $f \in (I^{ {\mathbin{\widehat{\otimes}}}(n+3)})^*$, we define
 $g_f \in (I^{ {\mathbin{\widehat{\otimes}}}(n+2)})^* $ by
\[
g_f(u) = \lim_{ \alpha \rightarrow U}
 f(e_{\alpha} \otimes u) \; 
{\rm for \; all} \; u \in I^{ {\mathbin{\widehat{\otimes}}}(n+2)}.
\]
One can check the following: $g_f$ is a bounded linear functional, 
the operator 
\[
s_n : (I^{ {\mathbin{\widehat{\otimes}}}(n+3)})^* \rightarrow (I^{ {\mathbin{\widehat{\otimes}}}(n+2)})^* 
: f \mapsto g_f
\]
is a bounded linear operator, 
$d_{n-1}^* \circ s_{n-1} + s_{n} \circ d_{n}^* =
id _{(I^{ {\mathbin{\widehat{\otimes}}}(n+2)})^*}$ for all $n \ge 1$ and 
$\varepsilon^* \circ s_{-1} + s_{0} \circ d_{0}^* =
id _{(I^{ {\mathbin{\widehat{\otimes}}}2})^*}$.
Thus (\ref{dpsres}) is admissible  (see \cite[III.1.9]{He0}).
Therefore, by definition, (\ref{psres})  
is a pseudo-resolution of $I$ in  $\A$-$\essmod$-$\A$ such that all modules $I^{ {\mathbin{\widehat{\otimes}}}(n)}$, $n \ge 2$, are flat in $\A$-$\mod$-$\A$.
\end{proof}

\section{WEAK BIDIMENSION OF BANACH ALGEBRAS WITH BOUNDED APPROXIMATE IDENTITIES}

Let $\A$ be a Banach algebra and $X$ be a Banach
$\A$-bimodule.  Let us recall the definition of the standard homological
chain complex ${\mathcal C}_{\sim} (\A, X)$. For $n\ge 0$, let $C_n(\A, X)$
denote the projective tensor product 
 $X \ptp \A^{{\ptp} n}$. 
The elements of $C_n(\A, X)$ are called
{\em $n$-chains}. Let the differential $d_n : C_{n+1} \to C_n$ be given by
\begin{eqnarray*} 
d_n(x \otimes  a_1 \otimes \ldots \otimes  a_ {n+1})\!
&=& \! x \cdot a_1 \otimes \ldots \otimes  a_ {n+1}\\
& & \! + \sum_{k=1}^{n} (-1)^k (x \otimes a_1 \otimes \ldots \otimes a_k
a_{k+1} \otimes \ldots \otimes a_ {n+1})\\ 
& &\! +(-1)^{n+1}(a_ {n+1} \cdot x  \otimes a_1 \otimes \ldots \otimes 
a_{n})
\end{eqnarray*}
with $d_{-1}$ the null map. The space of boundaries 
$B_n({\mathcal C}_{\sim} (\A, X))= \im d_n$ is denoted by $B_n(\A, X)$ and
the space of cycles $Z_n({\mathcal C}_{\sim} (\A, X)) = \Xer d_{n-1}$ is 
denoted by $Z_n(\A, X)$.
The homology groups of  this complex
$H_n({\mathcal C}_{\sim} (\A, X)) = Z_n(\A, X)/B_n(\A, X)$ are called  the
{\it Hochschild homology groups of  $\A$ with coefficients in $X$} and
denoted by $\H_n(\A, X)$ \cite[Definition II.5.28]{He0}.

The Hochschild cohomology groups $\H^n(\A, X^*)$ of the Banach algebra $\A$
with coefficients in the dual $\A$-bimodule $X^*$ are topologically isomorphic to 
the cohomology groups $H^n(({\mathcal C}_{\sim} (\A, X))^*)$ of  the 
dual complex $({\mathcal C}_{\sim} (\A, X))^*$, see \cite{BEJ1} and
\cite[Definition I.3.2 and Proposition II.5.27]{He0}.

Let $\A$  be a Banach  algebra with a bounded approximate identity
$(e_{\alpha})_{\alpha \in \Lambda}$. We put $\beta_n(\A )=
\A^{{\ptp}^{n+2}},$ $n \ge 0$, and let 
 $d_n:\beta_{n+1}(\A) \to \beta_{n}(\A)$ be given by
$$d_n( a_0 \otimes \ldots \otimes  a_ {n+2})= 
\sum_{k=0}^{n+1} (-1)^k ( a_0 \otimes \ldots \otimes a_k a_{k+1} \otimes
\ldots \otimes a_ {n+2}).$$ By Lemma \ref{pseudo-resol-Abai}, the
complex
$$
 0 \leftarrow \A \stackrel{\pi}{\longleftarrow} \beta_{0}(\A)  
\stackrel{d_0}{\longleftarrow} \beta_{1}(\A) 
\stackrel{d_1}{\longleftarrow} \cdots 
\from \beta_n(\A )  \stackrel{d_n}{\longleftarrow} \beta_{n+1}(\A)  \from
\dots, $$ 
where $\pi:\beta_{0}(\A ) \to \A: a \otimes b \mapsto ab $,
is a flat pseudo-resolution of the $\A$-bimodule $\A$. We denote it by
$ 0 \leftarrow \A \stackrel{\pi}{\longleftarrow} \beta(\A )$.

Further we will need the following  result of the author and M.C. White. 

\begin{proposition}\label{H*Ibai-idealA} {\rm \cite[Proposition 5.2 (i)]{LW}}  Let $\A$ be a Banach algebra and 
let  $I$  be a closed two-sided ideal of $\A$.
 Suppose that $I$ has a  bounded approximate identity.
Then, for any Banach $I$-bimodule $Z$,
\[
{\mathcal H}_n(I,Z) = 
{\mathcal H}_n(\A,\overline{IZI}) \; {\rm and } \;
  {\mathcal H}^n(I,Z^*) = 
{\mathcal H}^n(\A,(\overline{IZI})^*)  \; 
{\rm for \; all} \;  n \ge 1.
\]
\end{proposition}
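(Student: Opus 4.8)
The plan is to pass from $\H_n(I,Z)$ to $\H_n(\A,\overline{IZI})$ through a chain of reductions, each powered by the bounded approximate identity $(e_\alpha)$ of $I$, and then to obtain the cohomological statement by dualizing throughout. Write $Y=\overline{IZI}$. First I would equip $Y$ with a Banach $\A$-bimodule structure extending its $I$-bimodule structure. By the extended Cohen factorization theorem quoted in Section 2, $Y=\overline{IY}=\overline{YI}$, so $Y$ is an essential $I$-bimodule and every $y\in Y$ factors as $y=a\cdot y'=y''\cdot b$ with $a,b\in I$. Since $I$ is an ideal we have $ca,\,bc\in I$ for $c\in\A$, so I can set $c\cdot y=(ca)\cdot y'$ and $y\cdot c=y''\cdot(bc)$; well-definedness and boundedness follow from the factorization, after checking $c\cdot y=\lim_\alpha(ce_\alpha)\cdot y$. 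This makes the right-hand sides meaningful.

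Next I would reduce the coefficients inside $I$, proving $\H_n(I,Z)=\H_n(I,Y)$ for $n\ge1$. By Cohen factorization $\overline{ZI}=ZI$, and $Z/\overline{ZI}$ is annihilated by $I$ on the right; a homotopy on the standard complex built from $(e_\alpha)$ shows that the Hochschild homology of $I$ with coefficients in a right-annihilated bimodule vanishes in positive degrees, so the long exact sequence of $0\to\overline{ZI}\to Z\to Z/\overline{ZI}\to0$ gives $\H_n(I,Z)=\H_n(I,\overline{ZI})$ for $n\ge1$. Repeating on the left replaces $\overline{ZI}$ by $\overline{I\,\overline{ZI}}=Y$.

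The heart of the argument is the passage from $I$ to $\A$, that is $\H_n(I,Y)=\H_n(\A,Y)$ for the essential bimodule $Y$. Here I would invoke Lemma \ref{pseudo-resol-Abai}(iii): the complex (\ref{psres}), namely $\beta(I)$ with terms $I^{\ptp k}$, is at once a flat pseudo-resolution of $I$ in $I$-$\essmod$-$I$ and in $\A$-$\essmod$-$\A$, its terms being flat in both $I$-$\mod$-$I$ and $\A$-$\mod$-$\A$. Because $Y$ is essential, the natural maps $Y\ptp_{\A^e}I^{\ptp k}\iso Y\ptp_{I^e}I^{\ptp k}$ are isomorphisms (the outer $\A$-factors are absorbed by the b.a.i.\ of $I$), so the complexes $Y\ptp_{I^e}\beta(I)$ and $Y\ptp_{\A^e}\beta(I)$ coincide; the former computes $\H_n(I,Y)$ and the latter computes $\Tor^{\A^e}_n(Y,I)$. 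It then remains to identify $\Tor^{\A^e}_n(Y,I)$ with $\H_n(\A,Y)=\Tor^{\A^e}_n(Y,\A)$ for $n\ge1$, which follows from $0\to I\to\A\to\A/I\to0$ on noting that $\A/I$ is annihilated by $I$ on both sides while $Y$ is essential, so $\Tor^{\A^e}_n(Y,\A/I)=0$ in positive degrees.

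For the cohomology statement I would run the same three reductions on the dual complexes: since $\beta(I)$ is a flat pseudo-resolution its dual is admissible, and together with $\H^n(\A,X^*)=H^n((\C_\sim(\A,X))^*)$ this lets the isomorphisms above be transported to the cohomology of the dual complexes, each comparison map being built from morphisms whose duals split. I expect the delicate point to lie in the passage from $I$ to $\A$: verifying that the extended $\A$-action creates no new chains or boundaries, i.e. that $\ptp_{\A^e}$ and $\ptp_{I^e}$ genuinely agree on essential modules after tensoring with the flat terms $I^{\ptp k}$, compatibly with the differentials and with dualization. The restriction to $n\ge1$ is essential, entering through the vanishing assertions used above, which fail in degree $0$.
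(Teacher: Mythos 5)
First, a point of comparison that matters: the paper does not prove this proposition at all --- it is quoted from \cite{LW} (Proposition 5.2(i)) --- so there is no in-paper proof to match your argument against. Judged on its own terms, your three-stage skeleton (extend the bimodule structure to $\A$ by Cohen factorization; cut $Z$ down to the essential part $Y=\overline{IZI}$; pass from $I$ to $\A$ via the bar pseudo-resolution $\beta(I)$ of Lemma \ref{pseudo-resol-Abai} and flatness of its terms) is exactly the kind of argument the quoted machinery is designed for, and your first step is correct as written.

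The genuine gap is in the homology half of your coefficient reduction. You claim that ``a homotopy on the standard complex built from $(e_\alpha)$'' shows $\H_n(I,W)=\{0\}$, $n\ge 1$, when $W\cdot I=\{0\}$. No such homotopy exists: a bounded approximate identity converges only strongly, so inserting $e_\alpha$ into chains produces a \emph{bounded net} of maps $s_\alpha$ with $d s_\alpha+s_\alpha d\to \id$ pointwise in norm, and for a cycle $u$ this gives only $u=\lim_\alpha d(s_\alpha u)$, i.e.\ $Z_n=\overline{B_n}$. That is vanishing of the \emph{reduced} homology; since $\H_n$ is defined (and asserted in the proposition) as $Z_n/B_n$, which may be non-Hausdorff, the step fails as justified. (The ultrafilter/weak-$*$ limit homotopy does work for the cohomology statement $\H^n(I,W^*)=\{0\}$ --- which is precisely why Johnson's reduction is a cohomology argument.) The homology vanishing is true, but it has to come from the strict flatness of $I$ as a one-sided Banach $I$-module (\cite{He0}, Theorem VII.1.5, invoked in the proof of Lemma \ref{pseudo-resol-Abai}) together with dimension shifting along the admissible sequence $0\to I\to I_+\to\CC\to 0$; that is, from Tor machinery, not from an $(e_\alpha)$-homotopy.

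A second, related gap: both of your long-exact-sequence steps (for $0\to\overline{ZI}\to Z\to Z/\overline{ZI}\to 0$ and for $0\to I\to\A\to\A/I\to 0$) are not free in the continuous theory. A short exact sequence of Banach bimodules induces a long exact sequence of continuous Hochschild (co)homology only if it remains exact after applying $(\cdot)\ptp I^{\ptp n}$, respectively after dualizing, and for that one needs (weak) admissibility; neither sequence is admissible in general. Both \emph{are} weakly admissible --- the idempotent $f\mapsto\lim_{U} f(\,\cdot\,e_\alpha)$ on $Z^*$ (resp.\ on $\A_+^*$), with $U$ an ultrafilter as in the proof of Lemma \ref{pseudo-resol-Abai}, splits the dual sequence as $I$-bimodules --- and weak admissibility both passes through $\ptp$ (Lemma \ref{WeakAdmis}) and forces exactness, so the step can be repaired; but in the Banach category this verification is the crux and cannot be waved through. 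Finally, two smaller corrections for your third stage: the standard identification is $\H_n(\A,Y)=\Tor_n^{\A^e}(Y,\A_+)$, not $\Tor_n^{\A^e}(Y,\A)$, and since $\A$ itself is not assumed to have a b.a.i.\ you cannot appeal to Proposition \ref{H-Ext-Abai} to exchange them --- work with $0\to I\to\A_+\to\A_+/I\to 0$ instead; and the assertion that the flat pseudo-resolution $\beta(I)$ computes $\Tor_n^{\A^e}(Y,\,\cdot\,)$ (as opposed to $\ext$ with dual coefficients, which is what \cite{He0}, Exercise VII.1.19, and Proposition \ref{H-Ext-Abai} give) is itself a statement requiring proof.
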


\begin{remark} In Theorem \ref{AptpAone-side-bai} we prove  that the homological properties of the tensor product algebras $\A {\mathbin{\widehat{\otimes}}} \B$ and $\A_+ {\mathbin{\widehat{\otimes}}} \B_+$
are different. Thus in proofs of homological properties of $\A {\mathbin{\widehat{\otimes}}} \B$ one needs to avoid adjoining an identity to the algebra. On the other hand, the previous Proposition \ref{H*Ibai-idealA} shows that
in the case of Banach algebras $\A$ with bounded approximate identities
we can restrict ourselves to the category of essential Banach modules in questions on $db_w$ and  ${\mathcal H}^n(\A,X^*)$. In the next  propositions we develop standard homological tools for injective and flat essential Banach modules without adjoining an identity to the algebra. We  present results for one of the categories: $\A$-mod, mod-$\A$ or
$\A$-mod-$\A$; for the other categories similar results hold.
\end{remark}
Let $\A$ be a Banach algebra.
Recall that, for $X \in \A$-mod, the canonical morphism 
\[ \pi_+: \A_+ {\mathbin{\widehat{\otimes}}} X \to X \]
is defined by 
\[ \pi_+(a \otimes x)= a \cdot x \;\;(a \in \A_+, x \in X).
\]
By \cite[Chapter VII]{He0}, $X^* \in \mod$-$\A$ is injective if and only if 
\[\pi_+^*:X^* \to (\A_+ {\mathbin{\widehat{\otimes}}} X)^*\]
is a coretraction in $\mod$-$\A$, that is, there is a morphism  in $\mod$-$\A$
\[ \zeta_+: (\A_+ {\mathbin{\widehat{\otimes}}} X)^*\to X^*\]
such that $\zeta_+ \circ \pi_+^*= 1_{X^*}$.

\begin{proposition}\label{ingective-ess-mod}  Let $\A$  be a
  Banach  algebra and let $X$ be a left essential Banach $\A$-module, that is, $X = \overline{\A X}$. Then $X^* \in \mod$-$\A$ is injective if and only if 
\[\pi^*:X^* \to (\A {\mathbin{\widehat{\otimes}}} X)^*\]
is a coretraction in $\mod$-$\A$, that is, there is a morphism  in   $\mod$-$\A$
\[ \zeta: (\A {\mathbin{\widehat{\otimes}}} X)^*\to X^*\]
such that $\zeta \circ \pi^*= 1_{X^*}$. 
\end{proposition}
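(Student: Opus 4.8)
The plan is to reduce the statement about the essential module $\pi^*$ to the known criterion for $\pi_+^*$ stated just before the proposition, by exploiting that $X$ is essential. The key observation is that for a left essential module $X = \overline{\A X}$, the two maps $\pi_+ : \A_+ \ptp X \to X$ and $\pi : \A \ptp X \to X$ carry essentially the same information, because the unit of $\A_+$ contributes a direct summand that does not interact with the $\A$-action used in the Cohen factorization. First I would set up the relationship between the complexes $(\A_+ \ptp X)^*$ and $(\A \ptp X)^*$. Since $\A_+ = \A \oplus \CC$ as a Banach space, we have $\A_+ \ptp X \iso (\A \ptp X) \oplus X$ as right Banach $\A$-modules, and under this decomposition $\pi_+$ restricts to $\pi$ on the $\A \ptp X$ summand and to $\id_X$ on the $X$ summand.

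Next I would prove the forward implication. Suppose $X^*$ is injective in $\mod$-$\A$; then by the cited criterion from \cite[Chapter VII]{He0} there exists $\zeta_+ : (\A_+ \ptp X)^* \to X^*$ in $\mod$-$\A$ with $\zeta_+ \circ \pi_+^* = 1_{X^*}$. I would define $\zeta : (\A \ptp X)^* \to X^*$ by composing $\zeta_+$ with the dual of the inclusion $\A \ptp X \hookrightarrow \A_+ \ptp X$ (equivalently, the natural projection of $(\A_+ \ptp X)^*$ onto the $(\A \ptp X)^*$ summand, which is a morphism of right $\A$-modules). The main point to verify is that $\zeta \circ \pi^* = 1_{X^*}$. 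Here is where essentiality is crucial: writing an arbitrary $x \in X$ as $x = a \cdot y$ via Cohen factorization, one checks that $\pi^*(f)$ evaluated on $a \otimes y \in \A \ptp X$ reproduces the value $f(x)$, so that the summand $X$ omitted in passing from $\A_+ \ptp X$ to $\A \ptp X$ is redundant; the retraction property survives the restriction.

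For the converse, I would assume $\zeta : (\A \ptp X)^* \to X^*$ is a morphism in $\mod$-$\A$ with $\zeta \circ \pi^* = 1_{X^*}$, and construct $\zeta_+ : (\A_+ \ptp X)^* \to X^*$ by pulling $\zeta$ back along the module projection $\A_+ \ptp X \to \A \ptp X$ coming from the splitting $\A_+ = \A \oplus \CC$; I would then verify $\zeta_+ \circ \pi_+^* = 1_{X^*}$ and invoke the cited criterion to conclude $X^*$ is injective. The step I expect to be the main obstacle is confirming, in both directions, that the maps I construct are genuinely morphisms of \emph{right} $\A$-modules and not merely bounded linear maps: the $\A$-action on $(\A \ptp X)^*$ and on $(\A_+ \ptp X)^*$ acts on the left tensor factor, so I must check that the splitting $\A_+ = \A \oplus \CC$ respects this action (it does, since $\A$ is an ideal in $\A_+$) and that the Cohen-factorization identity used for the retraction is compatible with the module structure. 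Granting those compatibilities, the equivalence follows immediately from the $\pi_+$ criterion.
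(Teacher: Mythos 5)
Your reduction to the $\pi_+^*$ criterion is the same route the paper takes, and the maps you write down are ultimately the right ones; but the two facts you lean on to make the reduction work are not true as stated, and they sit exactly where the content of the proposition lies. The splitting $\A_+ \ptp X \iso (\A \ptp X) \oplus (1 \otimes X)$ does \emph{not} respect the $\A$-action: since $\A$ is an ideal, $\A \ptp X$ is a submodule of $\A_+ \ptp X$ (so the inclusion $i$ is a module map), but the complementary summand $1 \otimes X$ is not a submodule --- for $b \in \A$ one has $b \cdot (1 \otimes x) = b \otimes x \in \A \ptp X$ --- so the projection $p: \A_+ \ptp X \to \A \ptp X$ killing $1 \otimes X$ satisfies $p\bigl(b \cdot ((a+\lambda 1)\otimes x)\bigr) = (ba+\lambda b)\otimes x \neq ba \otimes x = b \cdot p\bigl((a+\lambda 1)\otimes x\bigr)$ and is \emph{not} a module map. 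Hence its dual $p^*$ is not a morphism in $\mod$-$\A$, and your $\zeta = \zeta_+ \circ p^*$ cannot be seen to lie in $\mod$-$\A$ by any formal or functorial argument. It \emph{is} a module morphism, but proving this requires the step your proposal omits, which is the heart of the paper's proof: for $f \in {\rm Ker}~i^*$ and $a \in \A$, $(f \cdot a)(c \otimes x) = f(ac \otimes x) = 0$ because $ac \in \A$; hence $f \cdot a = 0$, so the module property of $\zeta_+$ gives $\zeta_+(f)\cdot a = \zeta_+(f \cdot a) = 0$ for all $a \in \A$, i.e.\ $\zeta_+(f)$ vanishes on $\A X$, which is dense by essentiality, so $\zeta_+(f) = 0$. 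This vanishing of $\zeta_+$ on ${\rm Ker}~i^*$ is simultaneously what makes $\zeta$ (the factorization of $\zeta_+$ through the module epimorphism $i^*$) a well-defined morphism of right $\A$-modules, and what your verification of $\zeta \circ \pi^* = 1_{X^*}$ secretly needs: $p^*\pi^* g$ and $\pi_+^* g$ differ by an element of ${\rm Ker}~i^*$, so $\zeta(\pi^* g) = \zeta_+(\pi_+^* g) = g$ only once that element is known to be annihilated.

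The second unusable step is Cohen factorization. The proposition assumes only $X = \overline{\A X}$; the algebra $\A$ is arbitrary, with no bounded approximate identity, and Cohen's theorem (and its module version) requires a b.a.i. Without it, essentiality gives only density of the linear span of $\{a \cdot y : a \in \A, \; y \in X\}$, not exact factorization $x = a \cdot y$. Density does suffice, but only when routed through the module identity $\zeta_+(f)(a \cdot y) = \bigl(\zeta_+(f)\cdot a\bigr)(y) = \zeta_+(f \cdot a)(y)$ --- that is, through the same computation as above. Your converse direction is correct provided you read the construction as $\zeta_+ = \zeta \circ i^*$, precomposition with the restriction map $i^*$ (dual of the inclusion, hence a genuine morphism in $\mod$-$\A$), which gives $\zeta_+ \circ \pi_+^* = \zeta \circ i^* \circ \pi_+^* = \zeta \circ \pi^* = 1_{X^*}$; this is precisely the paper's argument. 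Pulling back along the ``module projection'' $\A_+ \ptp X \to \A \ptp X$ is not available, because no such module projection exists.
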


\begin{proof} Consider the natural embedding 
\[{i}:\A {\mathbin{\widehat{\otimes}}} X \to \A_+ {\mathbin{\widehat{\otimes}}} X: a \otimes x \mapsto a \otimes x.\]
Note that $\pi = \pi_+ \circ {i}$, thus 
$\pi^* = {i}^* \circ \pi_+^*.$

($\Rightarrow$)
Suppose $X^*$ is injective in $\mod$-$\A $. Thus there exists  a morphism  in  $\mod$-$\A$
\[ \zeta_+: (\A_+ {\mathbin{\widehat{\otimes}}} X)^*\to X^*\]
such that $\zeta_+ \circ \pi_+^*= 1_{X^*}$.

If $f \in {\rm Ker}~i^*$, then 
\[[f \cdot a] (c \otimes x) = f (ac \otimes x)=[i^*(f)](ac \otimes x)=0,
\]
for all $a \in \A$, $c \in \A_+$, $x \in X$. This implies that, for all 
$f \in {\rm Ker}~i^*$ and $a \in \A$,
\[ [\zeta_+(f)] \cdot a =  \zeta_+(f \cdot a) =0. \]
Therefore, for all 
$f \in {\rm Ker}~i^*$, $\zeta_+(f)$ is zero on $X^* = \overline{X^* \A}$.
Thus there is a unique morphism of right $\A$-modules
$ \zeta: (\A {\mathbin{\widehat{\otimes}}} X)^*\to X^*$ such that 
Diagram (\ref{ess-injective}) is commutative.

\begin{equation}
\label{ess-injective}
\begin{array}{ccccccccc}
\;(\A_+ {\mathbin{\widehat{\otimes}}} X)^* & \stackrel{\zeta_+}{\longrightarrow} &   \;X^* \; \\
 \vcenter{\llap{$\scriptstyle{i^*}$}}\downarrow  &  
      \vcenter{\rlap{$\scriptstyle{\zeta}$}}\nearrow & ~ \\ 
\;(\A {\mathbin{\widehat{\otimes}}} X)^* ~ & ~ & \\  
\end{array}
\end{equation} 
One can check that $\zeta \circ \pi^*= \id_{X^*}$.

($\Leftarrow$) Suppose that there is a morphism  in $\mod$-$\A$
\[ \zeta: (\A {\mathbin{\widehat{\otimes}}} X)^*\to X^*\]
such that $\zeta \circ \pi^*= 1_{X^*}$. Put $\zeta_+ = \zeta \circ i^*$.
It is obvious that $\zeta_+$ is a  morphism of right $\A$-modules and 
$\zeta_+ \circ \pi_+^*= \zeta \circ i^* \circ \pi_+^* = \zeta \circ \pi^* = 1_{X^*}$.
\end{proof}

\begin{proposition}\label{injective-mod-essmod}
{\rm (i)}~ Let $\A$ be a Banach algebra with a  left [right] bounded approximate identity  $(e_{\alpha})_{ \alpha \in \Lambda}$ and let 
$X$ be a left [right] essential Banach $\A$-module. 
Then $X^*$ is injective in  $\mod$-$\A$ [$\A$-$\mod$] if and only if 
$X^*$  is injective in $\essmod$-$\A$ [$\A$-$\essmod$].

{\rm (ii)}~ Let $\A$ be a Banach algebra with a bounded approximate identity  $(e_{\alpha})_{ \alpha \in \Lambda}$ and let 
$X$ be an essential Banach $\A$-bimodule. 
Then $X^*$ is injective in $\A$-$\mod$-$\A$ if and only if 
$X^*$  is injective in $\A$-$\essmod$-$\A$.
\end{proposition}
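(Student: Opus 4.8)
The plan is to reduce each of the two equivalences to a single splitting condition on the canonical morphism $\pi^{*}\colon X^{*}\to(\A\ptp X)^{*}$, and then to argue that this condition does not see the difference between $\mod$-$\A$ and $\essmod$-$\A$. For the larger category the reduction is already available: since $X$ is left essential, Proposition \ref{ingective-ess-mod} says that $X^{*}$ is injective in $\mod$-$\A$ if and only if $\pi^{*}$ is a coretraction in $\mod$-$\A$, i.e.\ there is a right-module morphism $\zeta$ with $\zeta\circ\pi^{*}=\id_{X^{*}}$. First I would establish the exact analogue internal to the essential category, namely that $X^{*}$ is injective in $\essmod$-$\A$ precisely when $\pi^{*}$ admits a left inverse compatible with the essential structure. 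This runs along the lines of Proposition \ref{ingective-ess-mod}, the point being that in $\essmod$-$\A$ one never adjoins a unit, so the module $\A\ptp X$ (with the outer left action) takes over the role that $\A_{+}\ptp X$ plays in the unital theory; here the left bounded approximate identity is used exactly as in Lemma \ref{pseudo-resol-Abai} to exhibit $\pi^{*}$ as an admissible monomorphism, via the ultrafilter limit $[s(f)](x)=\lim_{\alpha\to U}f(e_{\alpha}\otimes x)$ together with the fact that $e_{\alpha}\cdot x\to x$ for every $x\in X=\overline{\A X}$ (a consequence of Cohen factorisation).

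With both descriptions in hand, the equivalence becomes the assertion that $\pi^{*}$ is a coretraction in $\mod$-$\A$ exactly when it is one in the essential setting. One implication is immediate, since injectivity against all admissible monomorphisms forces injectivity against the essential ones. For the converse I would start from the essential-category left inverse and manufacture a genuine $\mod$-$\A$ left inverse of $\pi^{*}$; Proposition \ref{ingective-ess-mod} then upgrades this to full injectivity in $\mod$-$\A$. The engine is again the left bounded approximate identity $(e_{\alpha})_{\alpha\in\Lambda}$ together with a fixed ultrafilter $U$ refining the Fr\'echet filter on $\Lambda$, precisely as in Lemma \ref{pseudo-resol-Abai}: averaging against $(e_{\alpha})$ and passing to the limit along $U$ produces a bounded operator which one checks is a morphism of right $\A$-modules and still satisfies $\zeta\circ\pi^{*}=\id_{X^{*}}$.

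For part (ii) the plan is to transport the bimodule statement to a one-sided statement over the enveloping algebra. Using the standard identifications $\A$-$\mod$-$\A\iso\unmod$-$\A^{e}$ with $\A^{e}=\A_{+}\ptp\A_{+}^{op}$, an essential Banach $\A$-bimodule becomes an essential $\A^{e}$-module, and $\A^{e}$ inherits a bounded approximate identity $(e_{\alpha}\otimes e_{\alpha})$ from that of $\A$. Part (i), applied over $\A^{e}$, then yields the equivalence between injectivity in $\A$-$\mod$-$\A$ and in $\A$-$\essmod$-$\A$; the only extra bookkeeping is to match the canonical morphism $\pi^{*}$ for $\A^{e}$ with the two-sided comultiplication $(\A\ptp X\ptp\A)^{*}$ coming from the bar construction $\beta(\A)$.

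The hard part will be the converse implication of the second paragraph: converting a left inverse supplied by the essential theory into one valid in $\mod$-$\A$. The difficulty is structural rather than homological — duals of essential modules need not themselves be essential, so neither $X^{*}$ nor $(\A\ptp X)^{*}$ can be assumed to lie in $\essmod$-$\A$, and one cannot simply invoke the inclusion of categories. The averaging against the bounded approximate identity must therefore be arranged so that the ultrafilter limit simultaneously lands in the right space, commutes with the right $\A$-action, and preserves the relation $\zeta\circ\pi^{*}=\id_{X^{*}}$; verifying that all three hold at once is exactly the place where a genuine (one-sided) bounded approximate identity, as opposed to a mere unit, is indispensable.
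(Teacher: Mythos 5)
Your skeleton for part (i) is the paper's (easy inclusion of test diagrams, reduction of injectivity in $\mod$-$\A$ to the coretraction property of $\pi^*$ via Proposition \ref{ingective-ess-mod}, the ultrafilter limit from Lemma \ref{pseudo-resol-Abai}), but the mechanism you propose for the crucial step fails. The averaging operator $\alpha(f)(x)=\lim_{\alpha\to U}f(e_\alpha\otimes x)$ is \emph{not} a morphism of right $\A$-modules, and no rearrangement of the averaging will make it one: with the dual actions of the paper, $\alpha(f\cdot a)(x)=\lim_{U}f(ae_\alpha\otimes x)$ whereas $(\alpha(f)\cdot a)(x)=\lim_{U}f(e_\alpha\otimes a\cdot x)$, and these already disagree for elementary tensors $f=\phi\otimes\psi$ (a left b.a.i.\ gives no control over $ae_\alpha$, and even for a two-sided b.a.i.\ one gets $\phi(a)\psi(x)$ versus $\bigl(\lim_U\phi(e_\alpha)\bigr)\psi(a\cdot x)$, which are different in general). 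Your final paragraph in effect concedes this: you list ``commutes with the right $\A$-action'' as a property the limit must be ``arranged'' to have, but that is precisely what cannot be arranged. The paper's proof uses the averaging \emph{only} to produce a bounded linear left inverse of $\pi^*$, i.e.\ to exhibit $\pi^*$ as an admissible monomorphism; the module-morphism left inverse $\zeta$ then comes from the hypothesis itself, by applying injectivity of $X^*$ in $\essmod$-$\A$ (via \cite[Proposition III.1.14(ii)]{He0}) to the admissible monomorphism $\pi^*$, which is dual to the morphism $\pi$ of \emph{essential} modules. That the injectivity hypothesis, not the averaging, supplies the module structure of the splitting is the idea missing from your proposal.

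Part (ii) as you set it up is also incorrect. The enveloping algebra $\A^e=\A_+\ptp\A_+^{op}$ is \emph{unital}, so $(e_\alpha\otimes e_\alpha)$ is not a bounded approximate identity for it (it is one for $\A\ptp\A^{op}$), and over a unital algebra every unital module is essential; hence essentiality of $X$ as an $\A$-bimodule becomes invisible after the identification $\A$-$\mod$-$\A\iso\unmod$-$\A^e$, and part (i) applied over $\A^e$ compares the wrong pair of categories (it yields a tautology, not the statement $\A$-$\mod$-$\A$ versus $\A$-$\essmod$-$\A$). A reduction of this kind must be run over the non-unital algebra $\A\ptp\A^{op}$, under the identification of essential $\A$-bimodules with essential left $\A\ptp\A^{op}$-modules used in Proposition \ref{XptpYflatAptpBbimod}; the paper instead simply repeats the one-sided argument with the two-sided canonical morphism and the averaging $f\mapsto\lim_{\alpha\to U}f(e_\alpha\otimes x\otimes e_\alpha)$.
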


\begin{proof} (i) It is obvious that the injectivity of $X^*$ in $\mod$-$\A$ implies the injectivity of $X^*$ in $\essmod$-$\A$. 

Suppose that $X^*$ is injective  in $\essmod$-$\A$.  
As in  Lemma \ref{pseudo-resol-Abai} one can define a bounded linear operator
\[ \alpha: (\A {\mathbin{\widehat{\otimes}}} X)^*\to X^*: f \mapsto g_f, \]
where $g_f \in X^* $ is given by
\[
g_f(x) = \lim_{ \alpha \rightarrow U} f(e_{\alpha} \otimes x) \; 
{\rm for \; all} \; x \in X.
\]
One can check that  $\alpha \circ \pi^*= 1_{X^*}$. Therefore,
by \cite[Proposition III.1.14(ii)]{He0}, there is a morphism of right 
Banach $\A$-modules
\[ \zeta: (\A {\mathbin{\widehat{\otimes}}} X)^*\to X^*\]
such that $\zeta \circ \pi^*= 1_{X^*}$. By Proposition \ref{ingective-ess-mod}, $X^*$ is injective  in $\mod$-$\A$.

A proof of Part (ii) is similar to the proof of Part (i).   
\end{proof}

\begin{proposition}\label{XptpYflatAptpBbimod}  Let $\A$ and $\B$ be
  Banach  algebras, let $X$ be an essential Banach $\A$-bimodule  and let $Y$ be an essential Banach $\B$-bimodule. Suppose $X$ is flat in $\A$-mod-$\A$ and $Y$ is flat in $\B$-$\mod$-$\B$. Then  $X {\mathbin{\widehat{\otimes}}} Y$ is
 flat in $\A \ptp \B$-$\mod$-$\A \ptp \B$.
\end{proposition}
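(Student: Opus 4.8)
The plan is to establish flatness of $X \ptp Y$ in $\A \ptp \B$-$\mod$-$\A \ptp \B$ by reducing it, via the definition of flatness, to the flatness already assumed for $X$ and $Y$ over $\A$ and $\B$ separately. Recall that a bimodule is flat precisely when tensoring any admissible complex of bimodules against it over the enveloping algebra yields an exact complex; dualizing, it is convenient to use the standard duality whereby $Z$ is flat in $\A$-$\mod$-$\A$ if and only if its dual $Z^*$ is injective in that category, so that flatness of $X \ptp Y$ can be tested through the injectivity of $(X \ptp Y)^*$. I would first record the identification of enveloping algebras, namely that $(\A \ptp \B)^e = (\A \ptp \B)_+ \ptp (\A \ptp \B)_+^{op}$ relates to $\A^e \ptp \B^e$; more precisely, an $\A \ptp \B$-bimodule structure on $X \ptp Y$ arises from the outer $\A$-bimodule action on $X$ and $\B$-bimodule action on $Y$, with $(a \otimes b) \cdot (x \otimes y) = (a \cdot x) \otimes (b \cdot y)$ and similarly on the right.

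The key steps, in order, are as follows. First I would use Proposition \ref{injective-mod-essmod}(ii): since $X$ is essential and flat, $X^*$ is injective in $\A$-$\essmod$-$\A$, and likewise $Y^*$ is injective in $\B$-$\essmod$-$\B$; this lets me work within the essential categories, which is exactly where the tensor product of modules is well behaved and where adjoining identities is avoided. Second, I would invoke the criterion of Proposition \ref{ingective-ess-mod} (in its bimodule form): injectivity of $X^*$ is equivalent to the existence of a splitting morphism $\zeta_X : (\A \ptp X \ptp \A)^* \to X^*$ retracting the canonical $\pi_X^*$, and analogously a $\zeta_Y$ for $Y$. Third — and this is where Proposition \ref{tensor-product-w-a-resol} does the real work — I would take the flat pseudo-resolutions $\beta(\A)$ of $\A$ and $\beta(\B)$ of $\B$ furnished by Lemma \ref{pseudo-resol-Abai}, form their tensor product $\beta(\A) \ptp \beta(\B)$, and use Proposition \ref{tensor-product-w-a-resol} to conclude that this tensor-product complex remains weakly admissible. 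Tensoring $X \ptp Y$ against the resulting resolution, and using that the tensor product of flat modules over the respective enveloping algebras is flat over $(\A \ptp \B)^e$ (which follows from the associativity of the projective tensor product and the compatibility of $\ptp_{\A^e}$ with $\ptp_{\B^e}$ under $\ptp_{(\A \ptp \B)^e}$), gives exactness, hence flatness.

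Concretely, the cleanest route I would actually carry out is to build the retraction directly: from $\zeta_X$ and $\zeta_Y$ I would assemble a candidate retraction $\zeta : ((\A \ptp \B) \ptp (X \ptp Y) \ptp (\A \ptp \B))^* \to (X \ptp Y)^*$ by combining them through the duality isomorphism ${\mathfrak B}(E, F^*) \iso (E \ptp F)^*$ used in Lemma \ref{WeakAdmis}, checking that the composite with the canonical $\pi^*$ for $X \ptp Y$ is the identity. Verifying that this $\zeta$ is a genuine morphism of $\A \ptp \B$-bimodules — rather than merely a bounded linear map splitting $\pi^*$ — and then appealing to Proposition \ref{ingective-ess-mod} to upgrade the splitting to injectivity of $(X \ptp Y)^*$ in $\A \ptp \B$-$\essmod$-$\A \ptp \B$, and finally to Proposition \ref{injective-mod-essmod}(ii) to pass to the full category $\A \ptp \B$-$\mod$-$\A \ptp \B$, completes the argument.

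The main obstacle I anticipate is the bookkeeping of module actions when interleaving the two tensor factors: the $\A \ptp \B$-bimodule structure on $((\A \ptp \B) \ptp (X \ptp Y) \ptp (\A \ptp \B))^*$ mixes the $\A$- and $\B$-slots, and one must verify that the retraction constructed factor-by-factor respects the \emph{combined} action. Establishing that the tensor product of the two essential flat resolutions is again a flat pseudo-resolution over $(\A \ptp \B)^e$ — equivalently, that each module $\A^{\ptp p} \ptp \B^{\ptp q}$ appearing in $\beta(\A) \ptp \beta(\B)$ is flat as an $\A \ptp \B$-bimodule — is the technical crux, and here I would lean on Proposition \ref{tensor-product-w-a-resol} together with the strict flatness of $X$ and $Y$ as one-sided modules (as exploited inside the proof of Lemma \ref{pseudo-resol-Abai}) to control the weak admissibility of the dual complex.
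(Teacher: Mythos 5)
Your argument, as written, has a genuine gap: it repeatedly imports hypotheses that the proposition does not grant. Proposition \ref{injective-mod-essmod}(ii), Lemma \ref{pseudo-resol-Abai} and the resolutions $\beta(\A)$, $\beta(\B)$ all require bounded approximate identities, but here $\A$ and $\B$ are arbitrary Banach algebras; the only hypotheses are essentiality and flatness of $X$ and $Y$. This is not a cosmetic point: the paper applies this proposition in Theorem \ref{AptpAone-side-bai} precisely to biflat algebras lacking a two-sided (in case (iii), even a one-sided) bounded approximate identity, so a proof routed through b.a.i.-dependent machinery establishes a strictly weaker statement that could not serve those applications. Your third step has a further, independent defect: exactness of $X \ptp Y$ tensored against \emph{one} weakly admissible resolution of $\A \ptp \B$ does not establish flatness of $X \ptp Y$ (flatness requires exactness against all admissible complexes, equivalently injectivity of $(X \ptp Y)^*$), and the auxiliary claim you lean on there --- that the modules of $\beta(\A) \ptp \beta(\B)$ are flat over $\A \ptp \B$ because tensor products of flat modules are flat --- is an instance of the very statement being proved, so that route is circular.

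The ``concrete route'' of your final paragraphs is the right one and is in fact the paper's proof: obtain $\zeta_X$ and $\zeta_Y$ from Proposition \ref{ingective-ess-mod}, which needs only essentiality and no b.a.i., splice them into a retraction $\zeta_{X \ptp Y}$ of $\pi_{X \ptp Y}^*$ through the isomorphism ${\mathfrak B}(E,F^*) \iso (E \ptp F)^*$, and apply Proposition \ref{ingective-ess-mod} once more. Note that this already yields injectivity of $(X \ptp Y)^*$ in the \emph{full} category $\A \ptp \B$-$\mod$-$\A \ptp \B$, since that proposition characterizes injectivity in $\mod$-$\A$ for essential $X$; so your closing appeal to Proposition \ref{injective-mod-essmod}(ii) ``to pass to the full category'' is both unavailable (no b.a.i.) and unnecessary. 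What remains --- and what you flag as the main obstacle but do not carry out --- is exactly the bookkeeping that makes the factor-by-factor retraction a morphism for the combined action. The paper resolves it by a definite device you should adopt: regard $X$ as a flat \emph{left} $\A \ptp \A^{op}$-module and $Y$ as a flat \emph{right} $\B^{op} \ptp \B$-module, exhibit $(X \ptp Y)^* \iso {\mathfrak B}(Y, X^*)$ as a retract of ${\mathfrak B}((\A \ptp \A^{op}) \ptp X, Y^*)$ via post-composition $\phi \mapsto \zeta_X \circ \phi$, then apply $\zeta_Y$ in the second variable to realize it as a retract of ${\mathfrak B}\left((\A \ptp \B) \ptp (\A \ptp \B)^{op} \ptp (X \ptp Y), \CC\right)$, and check that the composite $\zeta_{X \ptp Y}$ is a morphism of $(\A \ptp \B) \ptp (\A \ptp \B)^{op}$-modules. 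Without some such interleaving your candidate $\zeta$ is only a bounded linear splitting of $\pi^*$, and the proof is incomplete at its crux.
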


\begin{proof}  
For Banach spaces $U$ and $V$,  we will use the well-known isomorphism \cite[Theorem 2.2.17]{He0}
\[
{\mathfrak B}(U, {\mathfrak B}(V, W))  \iso {\mathfrak B}(U {\mathbin{\widehat{\otimes}}} V, W): \psi \mapsto \phi,
\]
where $\phi(u \otimes v) = [\psi(u)](v)$, $u\in U$, $v\in V$.

As in \cite[Chapter VII]{He0}, $X$ is a flat left Banach $\A \ptp \A^{op}$-module with multiplication 
\[(a_1 \otimes a_2) \cdot x = a_1  \cdot x \cdot a_2\] 
and $X^* \in \mod$-$\A \ptp \A^{op}$ is injective.
By Proposition \ref{ingective-ess-mod},  since $X$ is a left
 essential Banach $\A \ptp \A^{op}$-module, 
 there is a morphism  in  $\mod$-$\A \ptp \A^{op}$
\[ \zeta_X: ((\A \ptp \A^{op}) {\mathbin{\widehat{\otimes}}} X)^*\to X^*\]
such that $\zeta_X \circ \pi_X^*= 1_{X^*}$. Here 
the canonical morphism 
\[ \pi_X: (\A \ptp \A^{op}) {\mathbin{\widehat{\otimes}}} X \to X \]
is defined by 
\[ \pi_X (u \otimes x)= u \cdot x = a_1  \cdot x \cdot a_2 \;\;(u=a_1 \otimes a_2 \in \A \ptp \A^{op}, x \in X).
\]
Therefore we can define 
\[ \tilde{\zeta}_X: {\mathfrak B}(Y,((\A \ptp \A^{op}) {\mathbin{\widehat{\otimes}}} X)^*)\to {\mathfrak B}(Y,X^*): \phi \mapsto \zeta_X \circ \phi;\]
\[ \tilde{\pi_X^*}: {\mathfrak B}(Y,X^*) \to {\mathfrak B}(Y,((\A \ptp \A^{op}) {\mathbin{\widehat{\otimes}}} X)^*): \psi \mapsto \pi_X^* \circ \psi;\]
and one can see that $\tilde{\zeta}_X \circ \tilde{\pi_X^*}= 1_{{\mathfrak B}(Y,X^*)}$.
Thus the $\A \ptp \B$-bimodule
$ (X {\mathbin{\widehat{\otimes}}} Y)^* \stackrel{i_1}{\iso} {\mathfrak B}(Y, X^*) $ is a retract of 
\[{\mathfrak B}(Y,((\A \ptp \A^{op}) {\mathbin{\widehat{\otimes}}} X)^*) \stackrel{i_2}{\iso} 
{\mathfrak B}((\A \ptp \A^{op}) {\mathbin{\widehat{\otimes}}} X, Y^*).\]

We can consider $Y$ as a flat right Banach $\B^{op} \ptp \B$-module with multiplication 
\[y \cdot (b_1 \otimes b_2)  = b_1  \cdot y \cdot b_2.\] 
By Proposition \ref{ingective-ess-mod},  since $Y$ is a right
 essential Banach $\B^{op} \ptp \B$-module and $Y^*$ is injective in  
$\B^{op} \ptp \B$-$\mod$, 
 there is a morphism  in  $\B^{op} \ptp \B$-$\mod$
\[ \zeta_Y: (Y {\mathbin{\widehat{\otimes}}} (\B^{op} \ptp \B))^*\to Y^*\]
such that $\zeta_Y \circ \pi_Y^*= \id_{Y^*}$. 
 Here 
the canonical morphism 
\[ \pi_Y: Y {\mathbin{\widehat{\otimes}}} (\B^{op} \ptp \B) \to Y \]
is defined by 
\[ \pi_Y (y \otimes u)= y \cdot u = b_1  \cdot y \cdot b_2 \;\;(u=b_1 \otimes b_2 \in \B^{op} \ptp \B, y \in Y).
\]
Thus we can define
\[ \tilde{\zeta}_Y: {\mathfrak B}((\A \ptp \A^{op}) {\mathbin{\widehat{\otimes}}} X, (Y {\mathbin{\widehat{\otimes}}} (\B^{op} \ptp \B) )^*) \to 
{\mathfrak B}((\A \ptp \A^{op}) {\mathbin{\widehat{\otimes}}} X, Y^*) : \phi \mapsto \zeta_Y \circ \phi;\]
\[ \tilde{\pi_Y^*}: {\mathfrak B}((\A \ptp \A^{op}) {\mathbin{\widehat{\otimes}}} X, Y^*)
\to {\mathfrak B}((\A \ptp \A^{op}) {\mathbin{\widehat{\otimes}}} X, (Y {\mathbin{\widehat{\otimes}}} (\B^{op} \ptp \B) )^*): \psi \mapsto \pi_Y^* \circ \psi;\]
and $\tilde{\zeta}_Y \circ \tilde{\pi_Y^*}= 1_{{\mathfrak B}((\A \ptp \A^{op}) {\mathbin{\widehat{\otimes}}} X, Y^*)}$.
Therefore 
$ (X {\mathbin{\widehat{\otimes}}} Y)^*$ is a retract of 
\[{\mathfrak B}((\A \ptp \A^{op}) {\mathbin{\widehat{\otimes}}} X, (Y {\mathbin{\widehat{\otimes}}} (\B^{op} \ptp \B) )^*) \stackrel{i_3}{\iso}  
 {\mathfrak B}((\A \ptp \B) \ptp (\A \ptp \B)^{op} \ptp (X \ptp Y), \CC).\]

One can check that 
\[ \zeta_{X \ptp Y}= i_1^{-1} \circ \tilde{\zeta_X}  \circ i_2^{-1} \circ \tilde{\zeta_Y}  \circ i_3^{-1}: ((\A \ptp \B) \ptp (\A \ptp \B)^{op} \ptp (X \ptp Y))^* \to  (X \ptp Y)^*
\]
 is a morphism  in  $(\A \ptp \B) \ptp (\A \ptp \B)^{op}$-$\mod$, that
\[ \pi^*_{X \ptp Y}= i_3 \circ \tilde{\pi^*_Y}  \circ i_2 \circ \tilde{\pi^*_X}  \circ i_1: (X \ptp Y)^* \to ((\A \ptp \B) \ptp (\A \ptp \B)^{op} \ptp (X \ptp Y))^*, 
\]
and $\zeta_{X \ptp Y} \circ \pi_{X \ptp Y}^*= 1_{(X \ptp Y)^*}$.
By Proposition \ref{ingective-ess-mod},
$ (X {\mathbin{\widehat{\otimes}}} Y)^*$ is injective in 
$\A \ptp \B$-mod-$\A \ptp \B$ and thus $X {\mathbin{\widehat{\otimes}}} Y$ is
 flat in  $\A \ptp \B$-mod-$\A \ptp \B$.
\end{proof}

\begin{proposition}\label{ProductResolutions}  Let $\A_1$ and $\A_2$ be
  Banach  algebras.  
Let~  
 $ 0 \leftarrow X \stackrel{\varepsilon_1}{\longleftarrow}{\mathcal X}$ 
be a  pseudo-resolution of $X$ in  $\A_1$-$\essmod$-$\A_1$ such that all modules in $\X$ are flat in $\A_1$-$\mod$-$\A_1$   and
 $ 0 \leftarrow Y \stackrel{\varepsilon_2}{\longleftarrow} {\mathcal Y}$ 
be a  pseudo-resolution  of $Y$ in $\A_2$-$\essmod$-$\A_2$
such that all modules in $\Y$ are flat in $\A_2$-$\mod$-$\A_2$. Then 
$ 0 \leftarrow X {\mathbin{\widehat{\otimes}}} Y \stackrel
{\varepsilon_1\otimes\varepsilon_2}{\longleftarrow} 
{\mathcal X} {\mathbin{\widehat{\otimes }}}{\mathcal Y}$
 is a  pseudo-resolution  of
$X {\mathbin{\widehat{\otimes}}} Y$ in $\A_1 {\mathbin{\widehat{\otimes}}} \A_2$-$\essmod$-$\A_1 {\mathbin{\widehat{\otimes}}} \A_2$
such that all modules in ${\mathcal X}{\mathbin{\widehat{\otimes }}}{\mathcal Y}$ are flat in $\A_1 {\mathbin{\widehat{\otimes}}} \A_2$-$\mod$-$\A_1 {\mathbin{\widehat{\otimes}}} \A_2$. \end{proposition}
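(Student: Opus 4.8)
The plan is to assemble the statement from the two structural results already in hand: Proposition \ref{tensor-product-w-a-resol}, which produces weak admissibility of a tensor product of weakly admissible complexes, and Proposition \ref{XptpYflatAptpBbimod}, which produces flatness of a tensor product of essential flat bimodules. After these are invoked, all that remains is the module-theoretic bookkeeping needed to see that $\X \ptp \Y$ genuinely lives in $\A_1 \ptp \A_2$-$\essmod$-$\A_1 \ptp \A_2$.

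First I would record that $0 \leftarrow X \ptp Y \stackrel{\varepsilon_1 \otimes \varepsilon_2}{\leftarrow} \X \ptp \Y$ is a complex of $\A_1 \ptp \A_2$-bimodules. Writing $\X$ as $0 \leftarrow X \stackrel{\varepsilon_1}{\leftarrow} X_0 \stackrel{d_0}{\leftarrow} X_1 \leftarrow \cdots$ and $\Y$ with maps $\varepsilon_2, \tilde d_q$, the space $X_m \ptp Y_q$ is an $\A_1 \ptp \A_2$-bimodule via $(a_1 \otimes a_2)\cdot(x \otimes y) = (a_1 \cdot x)\otimes(a_2 \cdot y)$ and the analogous right action. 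Since $d_{m-1}$, $\tilde d_{q-1}$, $\varepsilon_1$, $\varepsilon_2$ are bimodule morphisms, both the augmentation $\varepsilon_1 \otimes \varepsilon_2$ and the boundary map $\delta_{n-1}(x \otimes y) = d_{m-1}(x)\otimes y + (-1)^m x \otimes \tilde d_{q-1}(y)$ of Definition \ref{XptpY} are $\A_1 \ptp \A_2$-bimodule morphisms. For essentiality I would use that $X_m = \overline{\A_1 X_m \A_1}$ and $Y_q = \overline{\A_2 Y_q \A_2}$: if $\xi_k \to x$ in $\A_1 X_m \A_1$ and $\eta_k \to y$ in $\A_2 Y_q \A_2$, then $\xi_k \otimes \eta_k \to x \otimes y$, and each $(a\xi a')\otimes(b\eta b') = (a\otimes b)\cdot(\xi\otimes\eta)\cdot(a'\otimes b')$ lies in $(\A_1 \ptp \A_2)(X_m \ptp Y_q)(\A_1 \ptp \A_2)$; since elementary tensors span a dense subspace, $X_m \ptp Y_q$ is essential, and the same argument applies to $X \ptp Y$. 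Finally, a pseudo-resolution is by definition weakly admissible, so the two given complexes of Banach spaces are weakly admissible, and Proposition \ref{tensor-product-w-a-resol} gives at once that $0 \leftarrow X \ptp Y \stackrel{\varepsilon_1 \otimes \varepsilon_2}{\leftarrow} \X \ptp \Y$ is weakly admissible. This settles the pseudo-resolution claim.

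For the flatness claim, fix $n$ and recall that $(\X \ptp \Y)_n = \bigoplus_{m+q=n} X_m \ptp Y_q$ is a finite direct sum. Each $X_m$ is an essential $\A_1$-bimodule flat in $\A_1$-$\mod$-$\A_1$ and each $Y_q$ is an essential $\A_2$-bimodule flat in $\A_2$-$\mod$-$\A_2$, so Proposition \ref{XptpYflatAptpBbimod} shows each summand $X_m \ptp Y_q$ is flat in $\A_1 \ptp \A_2$-$\mod$-$\A_1 \ptp \A_2$. Since for any admissible complex $\mathcal Z$ of $\A_1 \ptp \A_2$-bimodules the functor $\mathcal Z \ptp_{(\A_1 \ptp \A_2)^e}(-)$ commutes with finite direct sums, and a finite direct sum of exact complexes is exact, the module $(\X \ptp \Y)_n$ is flat in $\A_1 \ptp \A_2$-$\mod$-$\A_1 \ptp \A_2$, which completes the proof.

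Because the two technical engines have already been built, I expect no serious obstacle here; the argument is an assembly. The single point demanding genuine care is the interplay between the essential and the full bimodule categories: the hypotheses supply flatness in $\A_i$-$\mod$-$\A_i$ while the pseudo-resolution is stated in the essential categories, so I must track carefully which category each assertion refers to and confirm that the conclusion of Proposition \ref{XptpYflatAptpBbimod}, namely flatness in the full category $\A_1 \ptp \A_2$-$\mod$-$\A_1 \ptp \A_2$, is exactly the flatness asserted in the present statement.
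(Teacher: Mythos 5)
Your proposal is correct and follows essentially the same route as the paper's own proof: both deduce flatness of each $(\X \ptp \Y)_n$ from Proposition \ref{XptpYflatAptpBbimod} and weak admissibility of the product complex from Proposition \ref{tensor-product-w-a-resol}. The extra verifications you supply (the $\A_1 \ptp \A_2$-bimodule structure, essentiality of $X_m \ptp Y_q$, and flatness of finite direct sums) are exactly the routine steps the paper leaves implicit, and they are carried out correctly.
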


\begin{proof}  
By Definition \ref{XptpY}, 
for any $n \ge 0$, the Banach $\A_1\ptp\A_2$-bimodule 
$$(\mathcal X {\mathbin{\widehat{\otimes}}}{\mathcal Y})_n=  \displaystyle{\bigoplus_{m+q=n}} X_m \ptp Y_q.$$  
By  Proposition \ref{XptpYflatAptpBbimod}, $(\mathcal X {\mathbin{\widehat{\otimes}}}{\mathcal Y})_n$ is flat in $\A_1 {\mathbin{\widehat{\otimes}}} \A_2$-$\mod$-$\A_1 {\mathbin{\widehat{\otimes}}} \A_2$ for all $n \ge 0$.

By assumption the complexes $ 0 \leftarrow X \stackrel{\varepsilon_1}{\longleftarrow}{\mathcal X}$ and $ 0 \leftarrow Y \stackrel{\varepsilon_2}{\longleftarrow} {\mathcal Y}$ are weakly admissible.
By Proposition \ref{tensor-product-w-a-resol},
the complex ${0 \leftarrow X \hat{\otimes} Y \stackrel
{\varepsilon_1\otimes\varepsilon_2}{\longleftarrow} 
{\mathcal X} \hat{\otimes }{\mathcal Y}}$
is weakly admissible too. 
\end{proof} 

By \cite[Theorem III.4.9]{He0}, for a Banach algebra $\A$ and a Banach $\A$-bimodule $X$,
\[
  {\mathcal H}^n(\A,X) = {\rm Ext}_{\A^e}^n(\A_+, X).
\]
For a Banach algebra with a bounded approximate identity and for dual bimodules, we may avoid adjoining identity to the algebra.

\begin{proposition}\label{H-Ext-Abai} {\rm \cite[Proposition 3.12]{LW}}  Let $\A$ be a Banach algebra with a bounded approximate identity and let $X$ be an essential  Banach $\A$-bimodule.
Then, for all $n \ge 0$,
\[
  {\mathcal H}^n(\A,X^*) = {\rm Ext}_{\A^e}^n(\A, X^*).
\]
\end{proposition}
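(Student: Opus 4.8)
The plan is to compute both sides of the asserted equality from the bar complex $\beta(\A)$, using that $\beta(\A)$ is a \emph{flat pseudo-resolution} of $\A$ (Lemma~\ref{pseudo-resol-Abai}) together with the duality between flatness and injectivity. For orientation, recall that the identity quoted just before the proposition gives $\H^n(\A, X^*) = {\rm Ext}^n_{\A^e}(\A_+, X^*)$ in general, so the real content is that, for the dual of an essential module, one may replace $\A_+$ by $\A$.

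First I would rewrite the left-hand side. By the formula $\H^n(\A, X^*) = H^n((\mathcal{C}_{\sim}(\A, X))^*)$ recalled above and the bimodule form of the adjunction $\mathfrak{B}(E, F^*) \iso (E \ptp F)^*$ of \cite[Theorem 2.2.17]{He0}, namely ${}_{\A}h_{\A}(P, X^*) \iso (P \ptp_{\A^e} X)^*$ (here $\ptp_{\A^e}$ is the bimodule tensor product and $\A$-$\mod$-$\A \iso \A^e$-modules), it is enough to identify the standard homological complex $\mathcal{C}_{\sim}(\A, X)$ with $\beta(\A) \ptp_{\A^e} X$. Since $\beta_n(\A) = \A^{\ptp(n+2)}$ and $X$ is essential, the two outer copies of $\A$ are absorbed into $X$ through $\overline{\A X \A} = X$ (Cohen factorization, using the bounded approximate identity), giving $\beta_n(\A) \ptp_{\A^e} X \iso \A^{\ptp n} \ptp X \iso C_n(\A, X)$ with matching differentials. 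Hence $\H^n(\A, X^*) = H^n({}_{\A}h_{\A}(\beta(\A), X^*))$.

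The heart of the proof is to show that the flat pseudo-resolution $\beta(\A)$ computes ${\rm Ext}^n_{\A^e}(\A, X^*)$ just as well as a genuine projective resolution $0 \leftarrow \A \leftarrow \P$ in $\A$-$\mod$-$\A$, for which ${\rm Ext}^n_{\A^e}(\A, X^*) = H^n({}_{\A}h_{\A}(\P, X^*))$ by definition. The obstacle is that $\beta(\A) \to \A$ is only \emph{weakly} admissible, so $\id_{\A}$ cannot be lifted to a chain map between $\P$ and $\beta(\A)$ over $\A$, and the usual comparison theorem for admissible resolutions does not apply directly. I would get around this by dualizing. By Lemma~\ref{pseudo-resol-Abai} the augmented complex $\beta(\A) \to \A$ is weakly admissible with all terms flat in $\A$-$\mod$-$\A$; hence its dual $\A^* \to \beta(\A)^*$ is admissible, and since the dual of a flat bimodule is injective in $\A$-$\mod$-$\A$ (the flat/injective duality of \cite[Chapter VII]{He0}, in the essential setting via Propositions~\ref{ingective-ess-mod} and \ref{injective-mod-essmod}), it is an \emph{injective resolution} of $\A^*$. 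The same argument applied to $\P$ shows that $\A^* \to \P^*$ is an injective resolution of $\A^*$.

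By the uniqueness of injective resolutions up to chain homotopy equivalence, $\P^*$ and $\beta(\A)^*$ are homotopy equivalent over $\id_{\A^*}$. Applying the covariant functor ${}_{\A}h_{\A}(X, -)$ and using the natural duality ${}_{\A}h_{\A}(P, X^*) \iso {}_{\A}h_{\A}(X, P^*)$, which follows from the symmetry $P \ptp_{\A^e} X \iso X \ptp_{\A^e} P$ and the adjunction above, I obtain
\[ H^n({}_{\A}h_{\A}(\P, X^*)) \iso H^n({}_{\A}h_{\A}(X, \P^*)) \iso H^n({}_{\A}h_{\A}(X, \beta(\A)^*)) \iso H^n({}_{\A}h_{\A}(\beta(\A), X^*)). \]
Together with the first step this yields ${\rm Ext}^n_{\A^e}(\A, X^*) = \H^n(\A, X^*)$. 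The single genuinely non-formal point is the passage through weak admissibility: the whole argument rests on the duality between flat modules and injective dual modules, which converts the weakly admissible flat resolution $\beta(\A)$ into an honest injective resolution of $\A^*$ after dualizing. This is exactly where the hypotheses enter — essentiality of $X$ to identify $\mathcal{C}_{\sim}(\A, X)$ with $\beta(\A) \ptp_{\A^e} X$, and the bounded approximate identity of $\A$ to produce the flat pseudo-resolution $\beta(\A)$ via Lemma~\ref{pseudo-resol-Abai}.
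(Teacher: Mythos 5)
Your proposal is correct, but note that the paper itself contains no proof of this proposition: it is quoted verbatim from \cite[Proposition 3.12]{LW}, so the only possible comparison is with that external argument, not with anything in the text. Your route is essentially the standard one underlying both that reference and \cite[Exercise VII.1.19]{He0}, which this paper invokes repeatedly for exactly the fact you prove (that a \emph{flat} pseudo-resolution computes $\ext$ into dual modules): you identify $({\mathcal C}_{\sim}(\A,X))^*$ with $_{\A}h_{\A}(\beta(\A), X^*)$ via conjugate associativity, and then compare $\beta(\A)$ with a genuine projective resolution $\P$ by dualizing both to admissible injective coresolutions of $\A^*$ and using the comparison theorem together with the natural isomorphism $_{\A}h_{\A}(P,X^*) \iso {_{\A}h_{\A}}(X,P^*)$; this is also how the paper itself implicitly computes in Theorem \ref{ExternalProductExt}. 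Two small points of precision. First, the isomorphism $\beta_n(\A)\ptp_{\A^e}X \iso X \ptp \A^{\ptp n}$ is the only non-formal step of your first paragraph: it rests on Rieffel's theorem that $\A \ptp_{\A} X \iso X$ for an essential module over an algebra with a bounded approximate identity (applied on both sides), and this, rather than a bare appeal to $X=\overline{\A X \A}$, is what should be cited, since it is precisely where both hypotheses of the proposition enter. Second, your parenthetical appeal to Propositions \ref{ingective-ess-mod} and \ref{injective-mod-essmod} for ``the dual of a flat bimodule is injective'' is misplaced: that equivalence is \cite[Theorem VII.1.14]{He0} in the full category $\A$-$\mod$-$\A$ with no essentiality hypothesis, and Lemma \ref{pseudo-resol-Abai} already supplies flatness of the modules $\beta_n(\A)$ in that category, so the essential-module propositions are not needed at that point.
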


\begin{theorem}\label{dbwA<=n} Let $\A$ be a Banach algebra with bounded approximate identity.
For each integer $n \ge 0$ the following properties of a Banach algebra $\A$ are equivalent:

{\rm (i)} $db_w \A \le n$;

{\rm (ii)} ${\mathcal H}^{n+1} (\A, X^*) = \{ 0 \} \; {\rm for \; all \;} X \in \A\hbox{-} \essmod \hbox{-}\A;$

{\rm (iii)} ${\mathcal H}^{m} (\A, X^*) = \{ 0 \} \; {\rm for \; all \;} m \ge n+1 \;{\rm and \; for \; all \;} X \in \A\hbox{-} \essmod \hbox{-}\A;$

{\rm (iv)}  ${\mathcal H}_{n+1} (\A, X) = \{ 0 \}$ and  ${\mathcal H}_{n} (\A, X) $ is a Hausdorff space for all
\newline $ X \in \A\hbox{-} \essmod \hbox{-}\A;$

{\rm (v)} if 
\vspace*{0.2cm}
\hspace{0.2cm}
$0  \longleftarrow \A
\stackrel {\varepsilon} { \longleftarrow}  P_0 
 \stackrel {\phi_0} { \longleftarrow} P_1 \stackrel {\phi_1} 
{ \longleftarrow}  \cdots P_{n-1} \stackrel {\phi_{n-1}} { \longleftarrow} Y \longleftarrow 0
\hfill {(0 \leftarrow \A \leftarrow \P)} $ 
\vspace*{0.2cm}
\newline is  a pseudo-resolution  of  $\A$  in which all the modules  $P_i$ are  flat in $\A$-$\essmod$-$\A$, then $Y$ is also flat in  $\A$-$\essmod$-$\A$. 

{\rm (vi)} the $\A$-bimodule $\A$ has a flat pseudo-resolution of length $n$  in the category of $\A$-$\essmod$-$\A$.

\end{theorem}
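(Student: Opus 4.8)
The plan is to prove the six conditions equivalent by a single cycle of implications supplemented by the separate equivalence (ii)$\,\Leftrightarrow\,$(iv), using three ingredients: reduction of the (co)homology to essential coefficients, the flat pseudo-resolution $\beta(\A)$ of Lemma \ref{pseudo-resol-Abai}, and the duality between a chain complex of Banach spaces and its dual. First I would settle (i)$\,\Leftrightarrow\,$(ii). Applying Proposition \ref{H*Ibai-idealA} with $I=\A$ (a closed two-sided ideal of itself possessing a b.a.i.) gives $\H^{m}(\A,Z^*)=\H^{m}(\A,(\overline{\A Z\A})^*)$ for every Banach $\A$-bimodule $Z$ and all $m\ge 1$; since $\overline{\A Z\A}$ is essential, the cohomology over arbitrary coefficients coincides with that over essential coefficients, so the infimum defining $db_w\A$ may be tested on $\A$-$\essmod$-$\A$. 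The implication (iii)$\,\Rightarrow\,$(ii) is trivial.

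Next I would run the resolution–flatness block. By Proposition \ref{H-Ext-Abai}, $\H^m(\A,X^*)=\ext^m_{\A^e}(\A,X^*)$ for essential $X$, and since $\beta(\A)$ has flat terms while the coefficients $X^*$ are dual, $\beta(\A)$ computes these Ext groups. Dimension shifting along any flat pseudo-resolution $0\leftarrow\A\leftarrow P_0\leftarrow\cdots\leftarrow P_{n-1}\leftarrow Y\leftarrow 0$ (flat terms are acyclic for $\ext(-,X^*)$, because a flat module has injective dual, so $\Tor_k(X,P_i)=0$ and hence $\ext^k(P_i,X^*)=0$ for $k\ge 1$) yields $\ext^{k}_{\A^e}(Y,X^*)\cong\ext^{n+k}_{\A^e}(\A,X^*)=\H^{n+k}(\A,X^*)$. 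Thus (ii) forces $\ext^1_{\A^e}(Y,X^*)=0$ for every essential $X$, which by the flatness–injectivity duality of Propositions \ref{ingective-ess-mod}–\ref{injective-mod-essmod} makes $Y$ flat: this is (v). Applying (v) to the truncation of $\beta(\A)$ at stage $n$ produces a flat pseudo-resolution of length $n$, giving (vi). Conversely, a flat pseudo-resolution of length $n$ has a dual complex concentrated in degrees $0,\dots,n$, whence $\H^m(\A,X^*)=\{0\}$ for all $m\ge n+1$; this is (vi)$\,\Rightarrow\,$(iii). Together these close the cycle (i)$\,\Leftrightarrow\,$(ii)$\,\Rightarrow\,$(v)$\,\Rightarrow\,$(vi)$\,\Rightarrow\,$(iii)$\,\Rightarrow\,$(ii).

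For (ii)$\,\Leftrightarrow\,$(iv) I would invoke the homology–cohomology duality for the standard complex $\C_\sim(\A,X)$. By the closed-range theorem, $H^{m}\big((\C_\sim(\A,X))^*\big)=\{0\}$ if and only if $\im d_{m-1}$ is closed, equivalently $\H_{m-1}(\A,X)$ is Hausdorff, and $Z_m(\A,X)=\overline{B_m(\A,X)}$, i.e.\ the reduced homology in degree $m$ vanishes. Taking $m=n+1$, condition (ii) is equivalent to: $\H_n(\A,X)$ is Hausdorff and the reduced $\H_{n+1}(\A,X)$ vanishes, for all essential $X$. Using the already-proved (iii) we also have $\H^{n+2}(\A,X^*)=\{0\}$, which by the same criterion makes $\H_{n+1}(\A,X)$ Hausdorff; combined with reduced vanishing this upgrades to $\H_{n+1}(\A,X)=\{0\}$ outright, giving (iv). The converse is immediate from the same duality, since $\H_{n+1}(\A,X)=\{0\}$ and $\H_n(\A,X)$ Hausdorff return $\H^{n+1}(\A,X^*)=\{0\}$.

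The main obstacle is the functional-analytic bookkeeping in the third paragraph: tracking closed ranges, Hausdorffness, and weak-$*$ density simultaneously, and in particular the two-degree trick that converts the ``Hausdorff $+$ reduced vanishing'' data carried by two consecutive cohomology groups into the genuine unreduced vanishing of $\H_{n+1}(\A,X)$ demanded by (iv). The second delicate point is the flatness–injectivity step inside (ii)$\,\Rightarrow\,$(v): the passage from $\ext^1_{\A^e}(Y,X^*)=\{0\}$ for all essential $X$ to ``$Y^*$ injective, hence $Y$ flat'' must be carried out entirely within the essential categories, without the convenience of adjoining an identity, which is precisely where Propositions \ref{ingective-ess-mod}–\ref{injective-mod-essmod} (together with the $\Tor$–$\ext$ duality) do the work.
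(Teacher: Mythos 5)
Your proposal is correct, and its main cycle is the paper's own: (i)$\Leftrightarrow$(ii) via Proposition \ref{H*Ibai-idealA} with $I=\A$; (ii)$\Rightarrow$(v) by dimension shifting along the given resolution, with flat terms acyclic on dual coefficients; (v)$\Rightarrow$(vi) by truncating $\beta(\A)$; and (vi)$\Rightarrow$(iii)$\Rightarrow$(ii) by computing $\ext^m_{\A^e}(\A,X^*)\iso\H^m(\A,X^*)$ (Proposition \ref{H-Ext-Abai} plus Helemskii's Exercise VII.1.19) from a length-$n$ flat pseudo-resolution. You depart from the paper in two places. First, where the paper settles (ii)$\Leftrightarrow$(iii) and (ii)$\Leftrightarrow$(iv) by citing Johnson's memoir (Corollary 1.3 and reduction of dimension), you prove them: (iii) falls out of your cycle for free, and (iv) follows from the closed-range duality criterion ($H^m$ of the dual complex vanishes iff $B_{m-1}$ is closed and $Z_m=\overline{B_m}$) together with the two-degree trick of using both $\H^{n+1}(\A,X^*)=\{0\}$ and $\H^{n+2}(\A,X^*)=\{0\}$ to upgrade reduced vanishing of $\H_{n+1}(\A,X)$ to genuine vanishing; this argument is sound and makes the theorem self-contained. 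Second, your dimension shift in (ii)$\Rightarrow$(v) runs in the first variable, $\ext_{\A^e}(-,X^*)$, along the pseudo-resolution itself; since that resolution is only \emph{weakly} admissible, long exact Ext-sequences in the first variable are not available for arbitrary coefficients, and they hold here only because the coefficients are dual modules. The clean way to justify it — and exactly what the paper does — is to dualize the resolution (weak admissibility makes the dual short exact sequences admissible), shift in the second variable $\ext_{\A^e}(X,-)$ using injectivity of the $P_i^*$, and then swap arguments via $\ext^{k}_{\A^e}(X,\A^*)\iso\ext^{k}_{\A^e}(\A,X^*)$ (Helemskii, Proposition III.4.13). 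Your sketch gestures at this with the phrase ``Tor--Ext duality,'' so the gap is presentational rather than mathematical, but a full write-up should make the dualize-and-swap mechanism explicit.
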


\begin{proof} By definition, (i) $\Rightarrow$ (ii). By
Proposition \ref{H*Ibai-idealA}, for a Banach algebra $\A$ with  a  bounded approximate identity and for any Banach $\A$-bimodule $Z$,
\[
  {\mathcal H}^n(\A,Z^*) = 
{\mathcal H}^n(\A,(\overline{\A Z \A})^*)  \; 
{\rm for \; all} \;  n \ge 1.
\]
Thus (ii) $\Rightarrow$ (i) and therefore (ii) $\iff $ (i).

By \cite[Corollary 1.3 and 1.a Reduction of dimension]{BEJ1}, (ii) $\iff$ (iii) and (ii) $\iff$ (iv). 

(vi) $\Rightarrow$ (ii)~
By Proposition \ref{H-Ext-Abai}, for a Banach algebra  $\A$ with a bounded approximate identity, for  an essential $X$ Banach $\A$-bimodule
and for all $n \ge 0$,
\[
  {\mathcal H}^n(\A,X^*) = {\rm Ext}_{\A^e}^n(\A, X^*).
\]
By \cite[VII.1.19]{He0}, ${\rm Ext}_{\A^e}^n(\A, X^*)$ is the cohomology of the complex $h_{\A^e}(\P, X^*)$ where $(0 \leftarrow \A \leftarrow \P)$ is a pseudo-resolution of $\A$ in $\A$-mod-$\A$. The rest is clear.

(ii) $\Rightarrow$ (v)~ By assumption, the complex 
 $(0 \leftarrow \A \leftarrow \P)$ is weakly admissible, that is,
the dual complex  

\vspace*{0.2cm}
\hspace{0.2cm}
$0  \longrightarrow \A^*
\stackrel {\varepsilon^*} { \longrightarrow}  P_0^* 
 \stackrel {\phi_0^*} { \longrightarrow} P_1 \stackrel {\phi_1^*} 
{ \longrightarrow} P_2^* \longrightarrow  \cdots P_{n-1}^* \stackrel {\phi_{n-1}^*} { \longrightarrow} Y^* \longrightarrow 0$ 
\vspace*{0.2cm}
\newline splits as a complex of Banach spaces.
Therefore there are the following admissible short exact sequences:
\[
0  \longrightarrow \A^*
\stackrel {\varepsilon^*} { \longrightarrow}  P_0^* 
 \stackrel {\phi_0^*} { \longrightarrow} {\rm Im }~ \phi_0^* \longrightarrow 0,
\]
\[ 0\longrightarrow {\rm Im }~ \phi_{k-1}^* = {\rm Ker}~\phi_{k}^*
\stackrel {i_{k}} { \longrightarrow} P_{k}^* \stackrel {\phi_{k}^*} { \longrightarrow}  {\rm Im }~ \phi_{k}^* \longrightarrow 0,
\]
$k= 1, 2, \dots, n-2,$
and 
\[ 0\longrightarrow {\rm Im }~ \phi_{n-2}^* = {\rm Ker}~\phi_{n-1}^*
\stackrel {i_{n-1}} { \longrightarrow} P_{n-1}^* \stackrel {\phi_{n-1}^*} { \longrightarrow} Y^* \longrightarrow 0,
\]
where $i_k, k=1, \dots, n-1,$ are natural inclusions.
Thus, by \cite[Theorem III.4.4]{He0}, for every $X \in \A$-$\essmod$-$\A$, there are long exact sequences of ${\rm Ext}_{\A^e}(X, \cdot)$ associated with these admissible short exact sequences.
By assumption, all the modules $P_i$ are  flat in $\A$-$\essmod$-$\A$ and therefore 
\[{\rm Ext}_{\A^e}^n(X,P_i^*) = \{ 0 \}\] for all $n \ge 1$ and all $X \in \A$-$\essmod$-$\A$.
Hence, for every $X \in \A$-$\essmod$-$\A$,
\[ {\rm Ext}_{\A^e}^1(X, Y^*) \iso {\rm Ext}_{\A^e}^2(X, {\rm Im}~\phi_{n-2}^*) \iso {\rm Ext}_{\A^e}^3 (X,{\rm Im}~\phi_{n-3}^* ) \iso \dots
\]
\[ \iso {\rm Ext}_{\A^e}^n(X,{\rm Im}~\phi_{0}^*) \iso {\rm Ext}_{\A^e}^{n+1}(X, \A^*).\]
By \cite[Proposition III.4.13]{He0},
\[{\rm Ext}_{\A^e}^{n+1}(X, \A^*) \iso
{\rm Ext}_{\A^e}^{n+1}(\A, X^*) 
\]
and, by Proposition \ref{H-Ext-Abai}, for the Banach algebra with bounded approximate identity,
\[
 {\rm Ext}_{\A^e}^{n+1}(\A, X^*) \iso {\mathcal H}^{n+1}(A,X^*).
\]
Therefore $Y^*$ is injective and hence $Y$ is flat in  $\A$-$\essmod$-$\A$.

It is obvious that (v) $\Rightarrow$ (vi).
\end{proof}

\begin{remark} To get the full  picture
for an arbitrary Banach algebra $\A$, one can see also \cite[Theorem 1]{Sel96} on equivalent conditions
to $db_w \A \le n$ for an integer $n \ge 0.$
\end{remark}

\begin{remark} It is clear that for   Banach  algebras with  bounded approximate identities  $\A$ and $\B$,
by  Theorem \ref{dbwA<=n} and Proposition \ref{ProductResolutions},
$ db_w (\A \ptp \B) \le  db_w \A + db_w \B.$
To get the equality here we need the following lemma of Yu. Selivanov and extensions of his Propositions 4.6.2 and 4.6.5 \cite{Sel02} to the case of Banach  algebras with  bounded approximate identities.
\end{remark}

Recall that a continuous linear operator $T: X \to Y$ between Banach spaces $X$ and $Y$
is topologically injective if it is injective and its image is closed, that is, 
$ T: X \to {\rm Im}~ T$ is a topological isomorphism. 

\begin{lemma}\label{not-injective-oper} {\rm \cite[Lemma 1]{Sel96}}
Let $E_0, E, F_0$ and  $F$ be Banach spaces, and let $S:E_0 \to E$ and 
$T:F_0 \to F$ be continuous linear operators. Suppose $S$ and $T$ are not  topologically injective. Then the continuous linear operator
\[ \Delta: E_0 \ptp F_0 \to  (E_0 \ptp F) \oplus
 (E \ptp F_0)\]
defined by 
\[ \Delta(x \otimes y)= (x \otimes T(y), S(x) \otimes y)\;\;(x \in 
E_0, y \in F_0).
\]
is not  topologically injective.
\end{lemma}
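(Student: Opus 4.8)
The plan is to reduce everything to the standard characterization that a continuous linear operator between Banach spaces is topologically injective if and only if it is \emph{bounded below}, i.e.\ there is a constant $c>0$ with $\|\Delta z\| \ge c\|z\|$ for all $z$; equivalently, $\Delta$ fails to be topologically injective precisely when there is a sequence of unit vectors $z_n$ with $\|\Delta z_n\| \to 0$. (The nontrivial direction of this equivalence is the open mapping theorem applied to $\Delta : E_0 \ptp F_0 \to \im \Delta$.) Thus I would aim to manufacture such a sequence for $\Delta$ directly out of witnesses for the failure of $S$ and $T$ to be bounded below.

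First I would extract, from the hypothesis that $S$ is not topologically injective, a sequence $(x_n)$ in $E_0$ with $\|x_n\| = 1$ and $\|S x_n\| \to 0$; and similarly, from the failure of $T$, a sequence $(y_n)$ in $F_0$ with $\|y_n\| = 1$ and $\|T y_n\| \to 0$. (For each $n$ one simply picks an element violating the lower bound with constant $1/n$ and normalizes.) Then I would set $z_n = x_n \otimes y_n \in E_0 \ptp F_0$. The key elementary fact about the projective tensor norm is the multiplicativity $\|x \otimes y\|_{\ptp} = \|x\|\,\|y\|$: the upper bound is immediate from the definition of the projective norm as an infimum, and the lower bound follows by testing against the functional $f \otimes g$, where $f \in E_0^*$ and $g \in F_0^*$ are norming functionals for $x$ and $y$. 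Hence $\|z_n\| = 1$ for every $n$.

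Next I would compute $\Delta z_n = (x_n \otimes T y_n,\, S x_n \otimes y_n)$ and estimate each coordinate using the same multiplicativity: $\|x_n \otimes T y_n\| = \|x_n\|\,\|T y_n\| = \|T y_n\| \to 0$ and $\|S x_n \otimes y_n\| = \|S x_n\|\,\|y_n\| = \|S x_n\| \to 0$. Since any reasonable norm on the direct sum $(E_0 \ptp F) \oplus (E \ptp F_0)$ is equivalent to the sum of the coordinate norms, this yields $\|\Delta z_n\| \to 0$ while $\|z_n\| = 1$, so $\Delta$ cannot be bounded below and is therefore not topologically injective.

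I do not expect a genuine obstacle here: the argument needs only the equivalence ``topologically injective $\iff$ bounded below'', the norm identity $\|x \otimes y\|_{\ptp} = \|x\|\,\|y\|$, and the observation that elementary tensors of unit vectors are unit vectors. The one point worth a line of care is that a single sequence of elementary tensors suffices to defeat the lower bound even though $\Delta$ is defined on the whole completed tensor product; but because topological injectivity is equivalent to a uniform lower norm bound on \emph{all} vectors, exhibiting its failure on the particular vectors $z_n$ is enough.
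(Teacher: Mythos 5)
Your proof is correct. One point of comparison is moot here: the paper does not prove this lemma at all --- it is quoted verbatim from Selivanov (\cite[Lemma 1]{Sel96}) with no argument reproduced --- so your write-up supplies a proof where the paper offers only a citation. Your argument is the natural one and it is complete: the equivalence ``topologically injective $\iff$ bounded below'' is exactly right given the paper's definition (injective with closed range; the open mapping theorem gives the continuous inverse onto the image), and it correctly covers both ways an operator can fail to be topologically injective, since a nonzero kernel vector also violates any lower bound $\|Sx\|\ge c\|x\|$. The two remaining ingredients --- that the projective norm is a cross norm, $\|x\otimes y\|_{\ptp}=\|x\|\,\|y\|$ (upper bound from the definition, lower bound by pairing with $f\otimes g$ for norming functionals $f,g$), and that it suffices to defeat the lower bound on the particular unit vectors $z_n=x_n\otimes y_n$ even though $\Delta$ lives on the completed tensor product --- are both handled correctly, the latter being precisely why the bounded-below formulation is the right one to use. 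The computation $\Delta z_n=(x_n\otimes Ty_n,\,Sx_n\otimes y_n)$ with both coordinates tending to $0$ in norm then finishes it; no gap.
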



\begin{proposition}\label{dbwA<n} Let $\A$ be a Banach algebra with bounded approximate identity and let 
\begin{equation}
\label{pseudo-res-ofA}
\hspace{0.2cm}
0  \longleftarrow \A
\stackrel {\varepsilon} { \longleftarrow}  P_0 
 \stackrel {\phi_0} { \longleftarrow} P_1 \stackrel {\phi_1} 
{ \longleftarrow}   \cdots P_{n-1} \stackrel {\phi_{n-1}} { \longleftarrow} P_n \longleftarrow 0 \hspace{0.5cm}
\hfill {(0 \leftarrow \A \leftarrow \P)}  
\end{equation}
be  a flat pseudo-resolution  of  $\A$  in  $\A$-$\essmod$-$\A$. 
Then $db_w \A < n$ if and only if, for every  $X$ in  $\A$-$\essmod$-$\A$, the operator
\[
\phi_{n-1} \otimes_{\A-\A} \id_X: P_n \ptp_{\A-\A} X \to P_{n-1} \ptp_{\A-\A} X
\]
is topologically injective.
\end{proposition}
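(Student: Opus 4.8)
The plan is to reduce $db_w \A < n$ to the vanishing of a single Hochschild cohomology group and then to recognise that vanishing, via the closed range theorem, as topological injectivity of the dual map. First I would rewrite $db_w \A < n$ as $db_w \A \le n-1$ and apply Theorem \ref{dbwA<=n} with its index equal to $n-1$; since $\A$ has a bounded approximate identity, this shows that $db_w \A < n$ holds if and only if $\H^n(\A, X^*) = \{0\}$ for every $X \in \A$-$\essmod$-$\A$. It therefore suffices to prove, for a fixed essential bimodule $X$, that $\H^n(\A, X^*) = \{0\}$ is equivalent to topological injectivity of $\phi_{n-1} \otimes_{\A-\A} \id_X$.

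Next I would compute $\H^n(\A, X^*)$ directly from the given flat pseudo-resolution $(0 \leftarrow \A \leftarrow \P)$. By Proposition \ref{H-Ext-Abai} we have $\H^n(\A, X^*) = \ext^n_{\A^e}(\A, X^*)$, and, exactly as in the proof of Theorem \ref{dbwA<=n}, \cite[VII.1.19]{He0} identifies this with the $n$th cohomology of the complex $h_{\A^e}(\P, X^*)$. Using the natural duality $h_{\A^e}(P_k, X^*) \iso (P_k \ptp_{\A-\A} X)^*$ (cf.\ the duality for the standard complex, \cite[Proposition II.5.27]{He0}), the complex $h_{\A^e}(\P, X^*)$ becomes the dual of the chain complex $\P \ptp_{\A-\A} X$, and under this identification the coboundary induced by $\phi_{n-1}$ is $(\phi_{n-1} \otimes_{\A-\A} \id_X)^*$. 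Because the resolution has length $n$, the module $P_n \ptp_{\A-\A} X$ occupies the top degree, so the top cohomology is a cokernel:
\[
\H^n(\A, X^*) \iso \Coker\,(\phi_{n-1} \otimes_{\A-\A} \id_X)^*.
\]

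Finally I would apply standard Banach space duality. For a continuous linear operator $T$ between Banach spaces, $T^*$ is surjective if and only if $T$ is injective with closed range, that is, topologically injective (the closed range theorem). Taking $T = \phi_{n-1} \otimes_{\A-\A} \id_X$, the cokernel above is $\{0\}$ precisely when $(\phi_{n-1} \otimes_{\A-\A} \id_X)^*$ is onto, hence precisely when $\phi_{n-1} \otimes_{\A-\A} \id_X$ is topologically injective. Quantifying over all essential $X$ and combining with the first reduction gives the asserted equivalence.

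The step I expect to be the main obstacle is the identification $\H^n(\A, X^*) \iso \Coker\,(\phi_{n-1} \otimes_{\A-\A} \id_X)^*$, since it requires that a merely weakly admissible flat pseudo-resolution genuinely compute $\ext_{\A^e}(\A, X^*)$ for the dual coefficient $X^*$; this is exactly what Proposition \ref{H-Ext-Abai} together with \cite[VII.1.19]{He0} supply in the presence of a bounded approximate identity, after which one must carefully match the induced coboundary with the dualised tensor map. An alternative, fully homological, route would use condition (iv) of Theorem \ref{dbwA<=n}: $db_w \A < n$ is equivalent to $\H_n(\A, X) = \{0\}$ together with Hausdorffness of $\H_{n-1}(\A, X)$, and reading these off the complex $\P \ptp_{\A-\A} X$ recovers injectivity of $\phi_{n-1} \otimes_{\A-\A} \id_X$ and closedness of its range, which together again amount to topological injectivity.
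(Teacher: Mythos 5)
Your proposal is correct and follows essentially the same route as the paper: reduction to $\H^n(\A,X^*)=\{0\}$ via Theorem \ref{dbwA<=n}, computation of $\ext^n_{\A^e}(\A,X^*)$ from the given flat pseudo-resolution via Proposition \ref{H-Ext-Abai} and \cite[VII.1.19]{He0}, identification of the Hom complex with the dual of $\P\ptp_{\A-\A}X$ by conjugate associativity, and the final equivalence between surjectivity of the adjoint and topological injectivity (the paper cites \cite[Corollary 8.6.15]{Ed} for this last step). The only detail you leave implicit, which the paper spells out, is that the b.a.i.\ (via Proposition \ref{injective-mod-essmod}(ii) and \cite[Theorem VII.1.14]{He0}) upgrades flatness in $\A$-$\essmod$-$\A$ to flatness in $\A$-$\mod$-$\A$, so that the resolution genuinely computes $\ext_{\A^e}$.
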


\begin{proof} By Theorem \ref{dbwA<=n},  $db_w \A < n$ if and only if ${\mathcal H}^n(\A,X^*) = \{0 \}$ for every  $X$ in  $\A$-$\essmod$-$\A.$
By Proposition \ref{H-Ext-Abai}, for a Banach algebra  $\A$ with a bounded approximate identity, for  an essential $X$ Banach $\A$-bimodule
and for all $n \ge 0$,
\[
  {\mathcal H}^n(\A,X^*) = {\rm Ext}_{\A^e}^n(\A, X^*).
\]

By assumption, $\A$ is a Banach algebra with bounded approximate identity. Thus, 
by Proposition \ref{injective-mod-essmod}(ii) and \cite[Theorem VII.1.14]{He0}, $ 0 \leftarrow \A  \leftarrow ({\P},\phi) $ is a flat pseudo-resolution of $\A$  in  $\A$-$\mod$-$\A$. By \cite[Exercise VII.1.19]{He0}, 
${\rm Ext}_{\A^e}^n(\A, X^*)$ is the cohomology of the complex 
 $ {_{\A} h_{\A}} ({\P}, X^*)$. Therefore, up to topological isomorphism, ${\rm Ext}_{\A^e}^n(\A, X^*)$ is the $n$-th cohomology of 
the complex ${_{\A} h_{\A}} (\P, X^*):$

\begin{equation}
\label{complexAhA(P,X*)}
0  \longrightarrow  {_{\A} h_{\A}}(P_0, X^*)
 \stackrel {h(\phi_0)} { \longrightarrow}  \cdots   \longrightarrow
{_{\A} h_{\A}}(P_{n-1}, X^*) \stackrel {h(\phi_{n-1})} { \longrightarrow} {_{\A} h_{\A}}(P_n , X^*) \longrightarrow 0 , 
\end{equation}
where $h(\phi_{n-1})$ is the operator defined by 
$h(\phi_{n-1})(\eta) = \eta \circ \phi_{n-1}$ for 
\newline $ \eta \in {_{\A} h_{\A}}(P_{n-1}, X^*)$.
Therefore
${\mathcal H}^n(\A,X^*) = \{0 \}$ for every  $X$ in  $\A$-$\essmod$-$\A$ if and only if $h(\phi_{n-1})$ is surjective for every  $X$ in  $\A$-$\essmod$-$\A$.

By the conjugate associativity law \cite[Theorem  II.5.21]{He0},
there is a natural isomorphism of Banach spaces:
\begin{eqnarray*}
{_{\A} h_{\A}} (\P, X^*)&\cong & (\P {\mathbin{\widehat{\otimes}}}_{\A^e} X)^*.
\end{eqnarray*}
It is clear that $h(\phi_{n-1})$ is the dual to $\phi_{n-1} \otimes_{\A-\A} \id_X$. By \cite[Corollary 8.6.15]{Ed}, $h(\phi_{n-1})$ is surjective if and only if $\phi_{n-1} \otimes_{\A-\A} \id_X$ is 
 topologically injective.
\end{proof}


\begin{proposition}\label{dbwAptpB>=max(dbwA,dbwB)}
Let $\A$ and $\B$  be  Banach  algebras with  bounded approximate 
identities. Then
\[
db_w (\A \ptp \B) \ge \max \{ db_w \A, db_w \B \}.
\]
\end{proposition}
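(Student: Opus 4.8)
The plan is to prove $db_w(\A\ptp\B)\ge db_w\A$ and $db_w(\A\ptp\B)\ge db_w\B$ separately; by the symmetry of the roles of $\A$ and $\B$ it is enough to establish the first. By the definition of $db_w$ and Theorem~\ref{dbwA<=n}, for an integer $m\ge 1$ one has $db_w\A\ge m$ if and only if there is an essential Banach $\A$-bimodule $X$ with ${\mathcal H}^m(\A,X^*)\ne\{0\}$. So it suffices to produce, for each such $m$ and $X$, an essential Banach $\A\ptp\B$-bimodule $Z$ with ${\mathcal H}^m(\A\ptp\B,Z^*)\ne\{0\}$, since this forces $db_w(\A\ptp\B)\ge m$ for every $m\le db_w\A$ and hence $db_w(\A\ptp\B)\ge db_w\A$ (the case $db_w\A=0$ being trivial, and the argument covering $db_w\A=\infty$ as well).

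First I would manufacture a degree-zero witness on the $\B$-side. Take $Y=\B\ptp\B$ with the outer bimodule structure $b\cdot(u\otimes v)\cdot b'=bu\otimes vb'$. Since $\B$ has a bounded approximate identity, $\overline{\B^2}=\B$, so $Y$ is essential, and by Lemma~\ref{pseudo-resol-Abai} (applied to the ideal $I=\B$ of $\B_+$) the module $Y=\B^{\ptp 2}$ is flat in $\B$-$\mod$-$\B$. The continuous map $u\otimes v\mapsto vu$ annihilates every commutator $bu\otimes v-u\otimes vb$, so it descends to a continuous surjection ${\mathcal H}_0(\B,Y)\to\overline{\B^2}=\B$; hence $V:={\mathcal H}_0(\B,Y)\ne\{0\}$ and therefore ${\mathcal H}^0(\B,Y^*)\iso V^*\ne\{0\}$.

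Now set $Z=X\ptp Y$, an essential $\A\ptp\B$-bimodule. Choosing standard flat pseudo-resolutions $0\leftarrow\A\leftarrow\P$ and $0\leftarrow\B\leftarrow\Q$ (for instance the bar resolutions $\beta(\A),\beta(\B)$), Proposition~\ref{ProductResolutions} shows that $0\leftarrow\A\ptp\B\leftarrow\P\ptp\Q$ is a flat pseudo-resolution of $\A\ptp\B$. Combining Proposition~\ref{H-Ext-Abai}, the computation of $\ext$ through such a resolution, the conjugate associativity isomorphism ${}_{\A\ptp\B}h_{\A\ptp\B}(\P\ptp\Q,Z^*)\iso\bigl((\P\ptp\Q)\ptp_{(\A\ptp\B)^e}Z\bigr)^*$, and the natural identification $(\P\ptp\Q)\ptp_{(\A\ptp\B)^e}(X\ptp Y)\iso(\P\ptp_{\A^e}X)\ptp(\Q\ptp_{\B^e}Y)\iso{\mathcal C}_{\sim}(\A,X)\ptp{\mathcal C}_{\sim}(\B,Y)$, I obtain
\[ {\mathcal H}^m(\A\ptp\B,(X\ptp Y)^*)\iso H^m\bigl(({\mathcal C}_{\sim}(\A,X)\ptp{\mathcal C}_{\sim}(\B,Y))^*\bigr). \]
The key step is then to collapse the $\B$-direction of this dual tensor complex. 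Because $Y$ is flat in $\B$-$\mod$-$\B$, its dual $Y^*$ is injective (Proposition~\ref{injective-mod-essmod}), so ${\mathcal H}^q(\B,Y^*)=\ext^q_{\B^e}(\B,Y^*)$ vanishes for $q\ge 1$ and equals $V^*$ for $q=0$, and $({\mathcal C}_{\sim}(\B,Y))^*$ admits a contracting homotopy in positive degrees onto $V^*$ in degree $0$. Feeding this homotopy into the product-homotopy construction of Proposition~\ref{tensor-product-w-a-resol} deforms the tensor complex and yields ${\mathcal H}^m(\A\ptp\B,(X\ptp Y)^*)\iso{\mathcal H}^m(\A,(X\ptp V)^*)$, where $V$ is treated as a coefficient Banach space carrying the trivial $\A$-action.

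To finish, fix $0\ne\bar\eta\in V^*$ and $v_0\in V$ with $\bar\eta(v_0)=1$. Using $(X\ptp V)^*\iso{\mathfrak B}(V,X^*)$, the maps $j\colon X^*\to{\mathfrak B}(V,X^*)$, $j(\phi)(v)=\bar\eta(v)\phi$, and $r\colon{\mathfrak B}(V,X^*)\to X^*$, $r(\Psi)=\Psi(v_0)$, are morphisms of $\A$-bimodules with $r\circ j=\id_{X^*}$; hence ${\mathcal H}^m(\A,X^*)$ is a retract of ${\mathcal H}^m(\A,(X\ptp V)^*)$, and since the former is nonzero so is the latter. This gives ${\mathcal H}^m(\A\ptp\B,(X\ptp Y)^*)\ne\{0\}$, as required. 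I expect the collapse step to be the main obstacle: one must check that the flatness of $Y$ really removes all higher Künneth interaction, so that the homotopy on $({\mathcal C}_{\sim}(\B,Y))^*$ lifts through the product homotopies $\gamma,\tilde\gamma$ of Proposition~\ref{tensor-product-w-a-resol} to a deformation of the whole dual tensor complex onto $({\mathcal C}_{\sim}(\A,X\ptp V))^*$, while keeping track of the non-Hausdorff (non-closed-range) phenomena typical of the Banach setting.
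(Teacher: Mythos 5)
Your reduction in step 1 (via Theorem \ref{dbwA<=n}), the choice of $Y=\B\ptp\B$ with outer actions, the identification $\H^m(\A\ptp\B,(X\ptp Y)^*)\iso H^m\bigl(({\mathcal C}_{\sim}(\A,X)\ptp{\mathcal C}_{\sim}(\B,Y))^*\bigr)$ (which is the paper's own Theorem \ref{ExternalProductExt}, whose proof is independent of this proposition, so there is no circularity), and the final retract argument are all sound. The genuine gap is exactly where you suspected: the collapse step. Flatness of $Y$ does give $\H^q(\B,Y^*)=\ext^q_{\B^e}(\B,Y^*)=\{0\}$ for $q\ge 1$, i.e.\ the complex $({\mathcal C}_{\sim}(\B,Y))^*$ is \emph{exact} in positive degrees; but you then assert it \emph{admits a contracting homotopy}, and in the Banach category exactness does not imply splitting --- a contracting homotopy by bounded operators is strictly stronger, and it is this stronger property that any product-homotopy argument consumes. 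Concretely, for a single space $X_m$ one has $(X_m\ptp{\mathcal C}_{\sim}(\B,Y))^*\iso{\mathfrak B}\bigl(X_m,({\mathcal C}_{\sim}(\B,Y))^*\bigr)$, and the functor ${\mathfrak B}(X_m,\cdot)$ carries split complexes to split complexes but does \emph{not} preserve exactness (a bounded operator into a quotient need not lift), so vanishing of $\H^q(\B,Y^*)$ alone does not even make the $\B$-direction of the dual bicomplex acyclic. Moreover, Proposition \ref{tensor-product-w-a-resol} cannot be ``fed'' your homotopy as stated: it requires \emph{both} augmented factors to be weakly admissible, whereas ${\mathcal C}_{\sim}(\A,X)$ is not an augmented weakly admissible complex (its dual cohomology is precisely the object of interest). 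What you need is a one-sided collapse lemma --- tensoring an arbitrary bounded-below complex against an augmented complex with a bounded splitting of its dual, using the naturality of Lemma \ref{CommutativeWeakAdmis} and a bicomplex argument --- and this is nowhere proved.

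It is worth noting that for your particular $Y$ the required homotopy \emph{does} exist, but for a reason unrelated to flatness: under the shift $u\otimes v\otimes a_1\otimes\cdots\otimes a_n\mapsto v\otimes a_1\otimes\cdots\otimes a_n\otimes u$ one has ${\mathcal C}_{\sim}(\B,\B\ptp\B)\iso\beta(\B)$, and $0\leftarrow\B\leftarrow\beta(\B)$ is weakly admissible by Lemma \ref{pseudo-resol-Abai} precisely because $\B$ has a bounded approximate identity; so your argument is repairable, though still heavier than necessary. The paper's proof runs in the opposite direction and avoids K\"unneth-type arguments entirely: assuming $db_w(\A\ptp\B)=n<\infty$, it takes a length-$n$ flat pseudo-resolution of $\A\ptp\B$ in $\A\ptp\B$-$\essmod$-$\A\ptp\B$ (Theorem \ref{dbwA<=n}), restricts scalars so that it is a flat pseudo-resolution in $\A$-$\essmod$-$\A$ \cite[Proposition VII.2.2]{He0}, observes that $\A\iso\A\ptp\CC$ is a direct $\A$-bimodule summand of $\A\ptp\B$, and then uses additivity of $\ext$ together with Proposition \ref{H-Ext-Abai} to conclude $\H^{n+1}(\A,X^*)=\ext_{\A^e}^{n+1}(\A,X^*)=\{0\}$ for every essential $X$, i.e.\ $db_w\A\le n$. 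You should either adopt that restriction-of-scalars/direct-summand argument, or else supply the bar-complex identification above plus the missing one-sided collapse lemma.
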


\begin{proof} By assumption, $\A$ and $\B$ are  Banach algebras with bounded approximate identities. Thus $\A \ptp \B$ has a bounded approximate  identity too \cite{Da}. 
By Proposition \ref{injective-mod-essmod}(ii) and \cite[Theorem VII.1.14]{He0}, for a Banach algebra $\D$ with a bounded approximate identity, $X$ is flat in $\D$-$\mod$-$\D$ if and only if $X$ is flat in $\D$-$\essmod$-$\D$.

 Suppose that $db_w (\A \ptp \B) =n < \infty$. 
By Theorem \ref{dbwA<=n},
the $\A \ptp \B$-bimodule $\A \ptp \B$ has a flat pseudo-resolution 
$ 0 \leftarrow \A\ptp \B  \leftarrow {\P}$
of length $n$  in the category $\A \ptp \B$-$\essmod$-$\A \ptp \B$.
By \cite[Proposition VII.2.2]{He0}, $ 0 \leftarrow \A\ptp \B  \leftarrow {\P}$ is  a flat pseudo-resolution  in the category $\A $-$\essmod$-$\A$. Therefore, for every essential Banach $\A$-bimodule $X$, 
${\rm Ext}_{\A^e}^{n+1}(\A \ptp \B, X^*) =\{0 \}.$

It is easy to see that $\A$ is isomorphic to $\A \ptp \CC$ in 
$\A$-$\mod$-$\A$. On the other hand $\A \ptp \CC$ is a direct module summand of $\A \ptp \B \in \A$-$\mod$-$\A$. Therefore, since 
${\rm Ext}$ is additive, for every essential  Banach $\A$-bimodule $X$,
\[
  {\mathcal H}^{n+1}(\A,X^*) = {\rm Ext}_{\A^e}^{n+1}(\A, X^*) =\{0 \}.
\]
Thus, by Theorem \ref{dbwA<=n}, $db_w \A \le n$ and $db_w (\A \ptp \B) \ge  db_w \A.$ As in  the case of $\A$ one can show that $db_w (\A \ptp \B) \ge  db_w \B.$ 
\end{proof}

\begin{theorem}\label{dbwAptpB}
Let $\A$ and $\B$  be  Banach  algebras with  bounded approximate 
identities. Then
\[
db_w (\A \ptp \B) =  db_w \A + db_w \B.
\]
\end{theorem}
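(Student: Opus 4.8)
The plan is to establish the equality $db_w(\A\ptp\B)=db_w\A+db_w\B$ by combining two inequalities. The inequality $db_w(\A\ptp\B)\le db_w\A+db_w\B$ has already been observed in the Remark following Theorem~\ref{dbwA<=n}: given flat pseudo-resolutions of $\A$ in $\A$-$\essmod$-$\A$ of length $db_w\A$ and of $\B$ in $\B$-$\essmod$-$\B$ of length $db_w\B$, Proposition~\ref{ProductResolutions} produces a flat pseudo-resolution of $\A\ptp\B$ in $\A\ptp\B$-$\essmod$-$\A\ptp\B$ whose length is the sum, and then Theorem~\ref{dbwA<=n}(vi) gives the bound. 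So the real work is the reverse inequality $db_w(\A\ptp\B)\ge db_w\A+db_w\B$. If either weak bidimension is infinite this is trivial (using Proposition~\ref{dbwAptpB>=max(dbwA,dbwB)}), so I would assume $db_w\A=p$ and $db_w\B=q$ are both finite and aim to show $db_w(\A\ptp\B)\ge p+q$.

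First I would fix flat pseudo-resolutions
\[
0\leftarrow\A\leftarrow\P:\quad 0\leftarrow\A\xleftarrow{\varepsilon}P_0\leftarrow\cdots\leftarrow P_{p-1}\xleftarrow{\phi_{p-1}}P_p\leftarrow 0
\]
in $\A$-$\essmod$-$\A$ of length exactly $p$, and similarly
\[
0\leftarrow\B\leftarrow\Q:\quad 0\leftarrow\B\xleftarrow{\eta}Q_0\leftarrow\cdots\leftarrow Q_{q-1}\xleftarrow{\psi_{q-1}}Q_q\leftarrow 0
\]
of length exactly $q$; these exist by Theorem~\ref{dbwA<=n}(vi). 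Since $db_w\A$ is exactly $p$ (not less), Proposition~\ref{dbwA<n} guarantees that there is some essential $\A$-bimodule $X$ for which the last boundary map tensored down,
\[
\phi_{p-1}\otimes_{\A-\A}\id_X:P_p\ptp_{\A-\A}X\to P_{p-1}\ptp_{\A-\A}X,
\]
fails to be topologically injective; likewise there is an essential $\B$-bimodule $Y$ with $\psi_{q-1}\otimes_{\B-\B}\id_Y$ not topologically injective. The goal is to use $X\ptp Y$ as a witness that the top boundary map of the product resolution $\P\ptp\Q$, after tensoring over $(\A\ptp\B)^e$ with $X\ptp Y$, is not topologically injective, whence $db_w(\A\ptp\B)\ge p+q$ again via Proposition~\ref{dbwA<n}.

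The technical heart is to identify the top boundary map of $\P\ptp\Q$. By Definition~\ref{XptpY} the top term $(\P\ptp\Q)_{p+q}$ is $P_p\ptp Q_q$ (the only summand with $m+r=p+q$ and $m\le p$, $r\le q$), and the boundary into $(\P\ptp\Q)_{p+q-1}=(P_{p-1}\ptp Q_q)\oplus(P_p\ptp Q_{q-1})$ is, up to sign, exactly the map $x\otimes y\mapsto(\phi_{p-1}x\otimes y,\,\pm\, x\otimes\psi_{q-1}y)$. After applying the functor $-\ptp_{(\A\ptp\B)-(\A\ptp\B)}(X\ptp Y)$, I would use the natural identification of $(P_p\ptp Q_q)\ptp_{(\A\ptp\B)^e}(X\ptp Y)$ with $(P_p\ptp_{\A-\A}X)\ptp(Q_q\ptp_{\B-\B}Y)$ (and similarly for the two summands of the target) to see that this boundary becomes precisely the operator $\Delta$ of Selivanov's Lemma~\ref{not-injective-oper}, with $E_0=P_p\ptp_{\A-\A}X$, $E=P_{p-1}\ptp_{\A-\A}X$, $F_0=Q_q\ptp_{\B-\B}Y$, $F=Q_{q-1}\ptp_{\B-\B}Y$, and with $S=\phi_{p-1}\otimes\id_X$, $T=\psi_{q-1}\otimes\id_Y$. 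Since $S$ and $T$ are not topologically injective, Lemma~\ref{not-injective-oper} says $\Delta$ is not topologically injective. Then Proposition~\ref{dbwA<n}, applied to the algebra $\A\ptp\B$ (which has a bounded approximate identity by \cite{Da}) with the flat pseudo-resolution $\P\ptp\Q$ and the essential bimodule $X\ptp Y$, yields $db_w(\A\ptp\B)\ge p+q$.

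\textbf{The main obstacle} I anticipate is the careful bookkeeping of the tensor-product identifications: one must check that $-\ptp_{(\A\ptp\B)^e}(X\ptp Y)$ really splits the top boundary map as the external-tensor $\Delta$, i.e.\ that the cross-identification $(P_m\ptp Q_r)\ptp_{(\A\ptp\B)^e}(X\ptp Y)\cong(P_m\ptp_{\A-\A}X)\ptp(Q_r\ptp_{\B-\B}Y)$ is natural in the boundary maps and commutes with the signs in Definition~\ref{XptpY}. This requires the conjugate-associativity and flatness machinery used in Proposition~\ref{XptpYflatAptpBbimod}, together with the observation that $X\ptp Y$ is essential over $\A\ptp\B$, so that Proposition~\ref{dbwA<n} legitimately applies. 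Once this identification is in place, the non-injectivity follows immediately from Lemma~\ref{not-injective-oper}, and the two inequalities combine to give the stated equality.
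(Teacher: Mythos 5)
Your proposal follows the paper's own proof almost step for step: the same upper bound via Theorem~\ref{dbwA<=n} and Proposition~\ref{ProductResolutions}, the same witnesses $X$ and $Y$ extracted from Proposition~\ref{dbwA<n}, the same identification of the top boundary map of $\P\ptp\Q$ after applying $-\ptp_{(\A\ptp\B)^e}(X\ptp Y)$, and the same appeal to Selivanov's Lemma~\ref{not-injective-oper}. There is, however, one genuine gap: before running the corner argument you exclude only the case where a weak bidimension is \emph{infinite}, but the argument also requires $p\ge 1$ and $q\ge 1$. If, say, $q=0$ (i.e.\ $\B$ is amenable), your argument breaks in two places. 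First, Proposition~\ref{dbwA<n} cannot be invoked with $n=0$: the map $\phi_{n-1}$ does not exist inside a length-zero resolution, and no reading of the proposition rescues this --- for an amenable algebra $\B$ one may take the flat pseudo-resolution $0\leftarrow\B\xleftarrow{\id_{\B}}\B\leftarrow 0$, and the identity map remains topologically injective after tensoring with every $Y$, so no witness $Y$ exists at all. Second, when $q=0$ the term $(\P\ptp\Q)_{p+q-1}$ consists of the single summand $P_{p-1}\ptp Q_0$ rather than a direct sum of two summands, so the top boundary map is not of the two-component form $\Delta$ to which Lemma~\ref{not-injective-oper} applies (that lemma needs \emph{two} non-topologically-injective operators).

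The repair is immediate and is exactly what the paper does: before assuming $0<p,q<\infty$, dispose of the cases where $db_w\A$ or $db_w\B$ equals $0$ \emph{or} $\infty$ by the sandwich
$\max\{db_w\A,\,db_w\B\}\le db_w(\A\ptp\B)\le db_w\A+db_w\B$,
which combines Proposition~\ref{dbwAptpB>=max(dbwA,dbwB)} (a result you already cite) with the upper bound; for instance $q=0$ gives $db_w(\A\ptp\B)\ge db_w\A=db_w\A+db_w\B$ at once. With that one extra case distinction inserted, your proof coincides with the paper's.
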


\begin{proof} By  Theorem \ref{dbwA<=n} and Proposition \ref{ProductResolutions},
\[
db_w (\A \ptp \B) \le  db_w \A + db_w \B.
\]
Therefore, by Proposition \ref{dbwAptpB>=max(dbwA,dbwB)},
\[
 \max \{ db_w \A, db_w \B \} \le db_w (\A \ptp \B) \le  db_w \A + db_w \B.
\]
Hence, in the case $db_w \A$ [$db_w \B$] is equal to $0$ or $\infty$,
$ db_w (\A \ptp \B) =  db_w \A + db_w \B.$

Suppose $db_w \A = m$ and $db_w \B = q$ where $0 < m,q < \infty$.
By Theorem \ref{dbwA<=n}, there is a flat pseudo-resolution 
$0 \leftarrow \A \stackrel{\varepsilon_1}{\longleftarrow} (\P, \phi) $
of length $m$  in the category  $\A$-$\essmod$-$\A$. By Proposition \ref{dbwA<n}, there exists $X \in \A$-$\essmod$-$\A$ such that  the operator
\[
\phi_{m-1} \otimes_{\A-\A} \id_X: P_m \ptp_{\A-\A} X \to P_{m-1} \ptp_{\A-\A} X
\]
is not topologically injective. Similarly, $db_w \B = q$ implies that
 there is
a flat pseudo-resolution 
$0 \leftarrow \B \stackrel{\varepsilon_2}{\longleftarrow} (\Q, \psi) $
of length $q$  in the category  $\B$-$\essmod$-$\B$ and 
there exist $Y \in \B$-$\essmod$-$\B$ such that  the operator
\[
\psi_{q-1} \otimes_{\B-\B} \id_Y: Q_q \ptp_{\B-\B} Y \to Q_{q-1} \ptp_{\B-\B} Y
\]
is not topologically injective.

By Proposition \ref{ProductResolutions},
$ 0 \leftarrow \A {\mathbin{\widehat{\otimes}}} \B \stackrel
{\varepsilon_1\otimes\varepsilon_2}{\longleftarrow} 
({\mathcal \P} {\mathbin{\widehat{\otimes}}} {\mathcal \Q, \delta)}$
 is a  flat pseudo-resolution  of
$\A \ptp \B$ in $\A {\mathbin{\widehat{\otimes}}} \B$-$\essmod$-$\A {\mathbin{\widehat{\otimes}}} \B$ of length $m+q$.
Take $Z = X \ptp Y$ in $\A {\mathbin{\widehat{\otimes}}} \B$-$\essmod$-$\A {\mathbin{\widehat{\otimes}}} \B$.
By Definition \ref{XptpY},  $(\P \ptp \Q)_{m+q}= P_m \ptp \Q_{q}$, 
$(\P \ptp \Q)_{m+q-1}=(P_{m-1} \ptp \Q_{q}) \oplus  (P_m \ptp \Q_{q-1})$
and
\[\delta_{m+q-1}:(\P \ptp \Q)_{m+q} \to (\P \ptp \Q)_{m+q-1}
\]
is defined by
$$ \delta_{m+q-1} (x \otimes y) = \phi_{m-1}(x) \otimes y + (-1)^m x \otimes   \psi_{q-1}(y),\;\; x \in P_m, y \in Q_q. $$
By Lemma \ref{not-injective-oper},  the operator
\[
\Delta: (\P_m \ptp_{\A-\A} X) \ptp (\Q_{q} \ptp_{\B-\B} Y) \to
\]
\[
\left((\P_{m-1} \ptp_{\A-\A} X) \ptp (\Q_{q} \ptp_{\B-\B} Y) \right) \oplus
\left((\P_m \ptp_{\A-\A} X) \ptp (\Q_{q-1} \ptp_{\B-\B} Y)\right)
\]
which is defined by 
\[\Delta(u \otimes v) = (\phi_{m-1} \otimes_{\A-\A} \id_X )(u) \otimes v +  u \otimes ((-1)^m \psi_{q-1} \otimes_{\B-\B} \id_Y )(v)
\]
is not topologically injective.
Note that there are natural isometric isomorphisms of Banach spaces
\[
(\P \ptp \Q)_{m+q} \ptp_{ \A \ptp \B-\A \ptp \B} Z \stackrel
{i_1}{\cong}
(\P_m \ptp_{\A-\A} X) \ptp (\Q_{q} \ptp_{\B-\B} Y)
\]
and 
\[
(\P \ptp \Q)_{m+q-1} \ptp_{ \A \ptp \B-\A \ptp \B} Z \stackrel
{i_2}{\cong}
\]
\[
\left((\P_{m-1} \ptp_{\A-\A} X) \ptp (\Q_{q} \ptp_{\B-\B} Y) \right) \oplus
\left((\P_m \ptp_{\A-\A} X) \ptp (\Q_{q-1} \ptp_{\B-\B} Y)\right).
\]
Thus one can see that the operator
\[
\delta_{m +q-1} \otimes_{\A \ptp \B-\A \ptp \B} \id_{Z}:
\]
\[
(\P \ptp \Q)_{m+q} \ptp_{ \A \ptp \B-\A \ptp \B} Z \to
(\P \ptp \Q)_{m+q-1} \ptp_{ \A \ptp \B-\A \ptp \B} Z
\]
which is equal to the operator $i^{-1}_2 \circ \Delta \circ i_1$ is not topologically injective.
Therefore, by Proposition \ref{dbwA<n}, $db_w (\A \ptp \B) = m +q.$
\end{proof}

\begin{theorem}\label{AptpAone-side-bai}
Let $\A$ and $\B$  be biflat Banach  algebras. Then\\
{\rm (i)}
\[
 \; \; db_w (\A \ptp \B) = 0  \;\; {\rm and} \;\; db_w (\A_+ \ptp \B_+) = db_w \A + db_w \B= 0 
\]
\hspace{1cm} if $ \A$ and $\B$ have bounded approximate  identities (b.a.i.);\\
{\rm (ii)}
\[
 \; \;db_w (\A \ptp \B) \le 1  \;\; {\rm and} \;\; db_w (\A_+ \ptp \B_+) = db_w \A + db_w \B = 2\] 
\hspace{1cm} if $ \A$ and $\B$  have  left [right], but 
 not two-sided b.a.i.;\\
{\rm (iii)} 
\[
\; \;db_w (\A \ptp \B) \le 2 \;\; {\rm and} \;\; db_w (\A_+ \ptp \B_+)= db_w \A + db_w \B = 4\] 
\hspace{1cm} if $ \A$ and $\B$  have neither  left  nor right  b.a.i.
\end{theorem}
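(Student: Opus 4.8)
The plan is to combine three ingredients: the inheritance of biflatness under the projective tensor product, the known values of the weak bidimension of a biflat algebra and of its unitization according to the type of approximate identity it carries, and the additivity formula of Theorem \ref{dbwAptpB} applied to the \emph{unital} algebras $\A_+$ and $\B_+$.

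First I would record that a biflat Banach algebra is an essential bimodule over itself: biflatness makes the dual $\pi^*$ of the multiplication $\pi:\A\ptp\A\to\A$ injective, so $\overline{\A\A}=\A$ and hence $\A=\overline{\A\A\A}$, and likewise for $\B$. Consequently Proposition \ref{XptpYflatAptpBbimod}, applied with the modules taken to be $\A$ and $\B$ over the algebras $\A$ and $\B$ themselves, shows that $\A\ptp\B$ is flat in $\A\ptp\B$-$\mod$-$\A\ptp\B$; that is, $\A\ptp\B$ is again biflat. I would then track approximate identities through the tensor product: if $\A$ and $\B$ carry bounded approximate identities $(e_\alpha)$ and $(f_\beta)$, then $(e_\alpha\otimes f_\beta)$ is one for $\A\ptp\B$, and the same construction turns left [right] b.a.i.\ for $\A$ and $\B$ into a left [right] b.a.i.\ for $\A\ptp\B$.

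Next I would invoke the weak-bidimension estimates for biflat algebras, namely the extensions to the b.a.i.\ setting of Selivanov's Propositions 4.6.2 and 4.6.5 \cite{Sel02} referred to after Lemma \ref{not-injective-oper}: a biflat algebra $\C$ satisfies $db_w\C\le 2$ in general, $db_w\C\le 1$ once $\C$ has a one-sided b.a.i., and $db_w\C=0$ (equivalently, $\C$ is amenable) when $\C$ has a two-sided b.a.i. Feeding the biflat algebra $\A\ptp\B$, together with the approximate-identity data from the previous step, into these estimates yields the three upper bounds $db_w(\A\ptp\B)=0$, $db_w(\A\ptp\B)\le 1$, $db_w(\A\ptp\B)\le 2$ in cases (i), (ii), (iii). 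Applied to $\A$ and $\B$ themselves, the same estimates give $db_w\A=db_w\B$ equal to $0$, $1$, $2$ in the three cases.

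For the unital side, $\A_+$ and $\B_+$ possess (two-sided) bounded approximate identities, so Theorem \ref{dbwAptpB} applies and gives $db_w(\A_+\ptp\B_+)=db_w\A_+ + db_w\B_+$. It then remains to evaluate $db_w\A_+$ for biflat $\A$. The route is the augmentation sequence $0\to\A\to\A_+\to\CC\to 0$ of $\A_+$-bimodules: since $\A$ is flat, its dual $\A^*$ is injective, so $\ext^n_{\A^e}(N,\A^*)=\{0\}$ for $n\ge 1$, and by the duality $\ext^{n}_{\A^e}(N,\A^*)\iso\ext^n_{\A^e}(\A,N^*)$ of \cite[Proposition III.4.13]{He0} already used in the proof of Theorem \ref{dbwA<=n}, the long exact $\ext_{\A^e}$-sequence identifies the homological dimension of $\A_+$ with that of the trivial bimodule $\CC$, yielding $db_w\A_+=db_w\A$ with the same values $0,1,2$. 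Substituting, $db_w(\A_+\ptp\B_+)=db_w\A+db_w\B$, equal to $0$, $2$, $4$ in cases (i), (ii), (iii), which is the asserted equality; note that in cases (ii) and (iii) this sum strictly exceeds the bound on $db_w(\A\ptp\B)$, exactly as advertised in the introduction.

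The main obstacle is not this assembly but the biflat estimates themselves: showing that a biflat algebra with only a one-sided (respectively no) b.a.i.\ has weak bidimension at most $1$ (respectively at most $2$), together with the matching computation $db_w\A_+=db_w\A$. These demand short flat pseudo-resolutions of $\A\ptp\B$ and of $\A_+$ adapted to the deficient approximate identity, built in the spirit of Lemma \ref{pseudo-resol-Abai}; it is precisely the extension of Selivanov's originally unital arguments to the b.a.i.\ case that makes such resolutions available without adjoining an identity to $\A$ itself.
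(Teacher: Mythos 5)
Your assembly follows the paper's own route: biflat algebras are essential (\cite[Proposition VII.2.6]{He0}), so Proposition \ref{XptpYflatAptpBbimod} makes $\A\ptp\B$ biflat; one-sided and two-sided b.a.i.\ pass to the tensor product; and the unital statement comes from additivity for $\A_+\ptp\B_+$ (the paper cites Selivanov's unital theorem \cite[Theorem 4.6.8]{Sel02}, while you use Theorem \ref{dbwAptpB}, which is equally valid since $\A_+$ and $\B_+$ have identities) together with $db_w\D_+=db_w\D$. However, there is a genuine gap where you pass from estimates to values. You invoke only the upper-bound halves of the biflat trichotomy ($db_w\C\le 2$ always, $\le 1$ with a one-sided b.a.i., $=0$ with a two-sided b.a.i.), yet the theorem asserts the exact sums $db_w\A+db_w\B=2$ in case (ii) and $=4$ in case (iii); your sentence ``the same estimates give $db_w\A=db_w\B$ equal to $0$, $1$, $2$'' does not follow from upper bounds. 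Case (ii) can be patched cheaply: no two-sided b.a.i.\ $\Rightarrow$ not amenable (amenable Banach algebras have two-sided b.a.i., \cite{BEJ1}) $\Rightarrow$ $db_w\A\ge 1$. But in case (iii) you need $db_w\A\ge 2$, i.e., that a biflat algebra with $db_w\le 1$ must possess a one-sided b.a.i.; nothing you state yields this, and your closing list of obstacles omits it entirely.

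The fix is a citation, not new work: Selivanov's published result \cite[Theorem 6]{Sel95}, which is exactly what the paper's proof quotes, gives the trichotomy as \emph{equalities} for arbitrary (not necessarily unital) biflat Banach algebras. Consequently your concern that ``extensions of Selivanov's Propositions 4.6.2 and 4.6.5 to the b.a.i.\ case'' must be developed here is misplaced --- the paper's remark about those extensions concerns the proof of the additivity formula of Theorem \ref{dbwAptpB}, not the biflat values. Likewise, $db_w\D=db_w\D_+$ is a standard fact for every Banach algebra, which the paper simply cites as known; your augmentation-sequence sketch is unnecessary, and as written it manipulates $\ext^n_{\A^e}(\,\cdot\,,\A^*)$ and $\ext^n_{\A^e}(\A,\,\cdot\,)$ without ever connecting them to the cohomology $\H^n(\A_+,Y^*)$ of the \emph{algebra} $\A_+$ (that connection requires the reduction of arbitrary $\A_+$-bimodules to unital ones), so it does not by itself establish $db_w\A_+=db_w\A$.
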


\begin{proof} By  \cite[Theorem 6]{Sel95}, for a biflat
 Banach algebra $\D$,
\[
db_w \D = 
\begin{cases}
0 & \text{if $ \D$  has a b.a.i.,}
\\
1 & \text{if $ \D$  has  a left or right, but 
 not two-sided b.a.i.,}
\\
2 & \text{if $ \D$  has  neither a  left  nor  a
right  b.a.i.}
\end{cases}
\]
It is known that for any Banach algebra  $\D$, $db_w \D = db_w \D_+$. Hence, by \cite[Theorem 4.6.8]{Sel02}, for unital Banach algebras $\A_+$ and $\B_+$,
\[
db_w (\A_+ \ptp \B_+ )=  db_w \A_+ + db_w \B_+ = db_w \A + db_w \B.
\]
By \cite[Proposition VII.2.6]{He0}, a biflat Banach algebra is essential. Hence, by Proposition \ref{XptpYflatAptpBbimod}, the tensor product Banach algebra $\A \ptp \B$ is biflat. Note that $\A \ptp \B$ has a [left] (right) bounded approximate  identity if $\A$ and $\B$ have
 [left] (right) bounded approximate  identities \cite{Da}. 
The rest is clear.
\end{proof}

\begin{example} 
In \cite[Theorem 5.3.2]{Sel02} Yu. Selivanov proved that the algebra $\K(\ell_2 \ptp \ell_2)$ of compact operators on $ \ell_2 \ptp \ell_2$ is a  biflat Banach  algebra with  a left, but  not two-sided bounded approximate identity. Therefore, by Theorem \ref{AptpAone-side-bai},
for an integer $n \ge 1$,
\[
db_w [\K(\ell_2 \ptp \ell_2)]^{\ptp n} \le 1 \; \;{\rm and} \;\;
db_w [\K(\ell_2 \ptp \ell_2)_+]^{\ptp n}=n.
\]
\end{example}
\begin{example}
 Let $(E,F)$ be a pair of infinite-dimensional Banach spaces endowed with a nondegenerate jointly continuous bilinear 
form $ \langle \cdot, \cdot \rangle : E \times F \to {\mathbb C}$
that is not identically zero. The space ${\mathcal A}=E {\mathbin{\widehat{\otimes}}} F $ is a ${\mathbin{\widehat{\otimes}}}$-algebra with respect to the multiplication defined by 
$$ (x_1 \otimes x_2) (y_1 \otimes y_2) = \langle x_2, y_1 \rangle  
x_1 \otimes y_2, \; x_i \in E,\; y_i \in F.$$
Then ${\mathcal A}=E {\mathbin{\widehat{\otimes}}} F $ is called the {\it tensor algebra generated by the duality} 
$(E, F, \langle \cdot, \cdot \rangle)$.
In \cite[Examples 2.1.13 and 2.1.14]{Sel02}
Yu.V. Selivanov proved that $\A$ is biprojective, and,
by \cite[Theorem 2.6.21 and Corollary 2.6.24]{Sel02}, has  neither a  left  nor  a right bounded approximate identity.

In particular, if $E$ is a Banach space with the approximation
property, then the algebra ${\mathcal A}=E {\mathbin{\widehat{\otimes}}} E^* $ is isomorphic to
the algebra ${\mathcal N}(E)$ of nuclear operators on $E$
\cite[II.2.5]{He0}. 

Therefore, by Theorem \ref{AptpAone-side-bai}, for an integer $n \ge 1$,
\[
db_w [E {\mathbin{\widehat{\otimes}}} F ]^{\ptp n} \le 2 \; \;{\rm and} \;\;
db_w [(E {\mathbin{\widehat{\otimes}}} F)_+]^{\ptp n}=2n.
\]
\end{example}
\begin{example}
 Let $\B$  be the algebra of $2 \times 2$-complex matrices of the form 
\[
\left[ \begin{array}{cc}
a & b
\\
0 & 0
\end{array} \right]
\]
with matrix multiplication and norm. It is known that $\B$ is $2$-amenable, biprojective, has a left, but not right identity  \cite{Pat2}.
Therefore, by Theorem \ref{AptpAone-side-bai}, for an integer $n \ge 1$,
\[
db_w [\B]^{\ptp n} = 1, \;\;{\rm and} \;\;db_w [\B_+]^{\ptp n}=n; \;
\]
\[
db_w [\B \ptp \K(\ell_2 \ptp \ell_2)]^{\ptp n} = 1, 
 \;\;{\rm and} \;\;
db_w [\B_+ \ptp \K(\ell_2 \ptp \ell_2)_+]^{\ptp n}=2n. 
\]
\end{example}

\section{EXTERNAL PRODUCTS OF HOCHSCHILD COHOMOLOGY OF BANACH ALGEBRAS WITH BOUNDED APPROXIMATE IDENTITIES}

\begin{theorem}\label{ExternalProductExt}
Let $\A$ and $\B$  be  Banach  algebras with  bounded approximate 
identities, let $X$  be an essential  Banach 
 $\A$-bimodule and let $Y$ be an essential  Banach  $\B$-bimodule.
 Then for $n \ge 0$, up to topological
isomorphism, 
\[ \H^n(\A \ptp \B, (X \ptp Y)^*) = 
H^n(({\mathcal C}_{\sim}(\A, X) \ptp {\mathcal C}_{\sim}(\B, Y))^*). \]
\end{theorem}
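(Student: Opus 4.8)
The plan is to realise both sides as the cohomology of a single complex of Banach spaces, built from a flat pseudo-resolution of $\A \ptp \B$ that respects the tensor structure rather than from the bar resolution of $\A \ptp \B$ itself. Since $\A$ and $\B$ carry bounded approximate identities, so does $\A \ptp \B$ (see \cite{Da}), and $X \ptp Y$ is an essential $\A \ptp \B$-bimodule because $X = \overline{\A X \A}$ and $Y = \overline{\B Y \B}$ force $\overline{(\A \ptp \B)(X \ptp Y)(\A \ptp \B)} = X \ptp Y$. Hence Proposition \ref{H-Ext-Abai} applies and yields, up to topological isomorphism,
\[
\H^n(\A \ptp \B, (X \ptp Y)^*) = {\rm Ext}^n_{(\A \ptp \B)^e}(\A \ptp \B, (X \ptp Y)^*).
\]
It is essential here to keep $\A \ptp \B$ rather than $(\A \ptp \B)_+$ on the left: this is what will let the resolution factorise, and it is precisely the point flagged in the Remark after Proposition \ref{H*Ibai-idealA}.

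Next I would take the flat pseudo-resolutions $0 \leftarrow \A \stackrel{\pi}{\longleftarrow} \beta(\A)$ and $0 \leftarrow \B \stackrel{\pi}{\longleftarrow} \beta(\B)$ of the bimodules $\A$ and $\B$ introduced after Lemma \ref{pseudo-resol-Abai}, and apply Proposition \ref{ProductResolutions} to them. This produces a flat pseudo-resolution $0 \leftarrow \A \ptp \B \leftarrow \beta(\A) \ptp \beta(\B)$ of $\A \ptp \B$ in $(\A \ptp \B)$-$\essmod$-$(\A \ptp \B)$; by Proposition \ref{injective-mod-essmod}(ii) together with \cite[Theorem VII.1.14]{He0} it is also a flat pseudo-resolution in $(\A \ptp \B)$-$\mod$-$(\A \ptp \B)$, so it computes the Ext-groups above. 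Arguing as in Proposition \ref{dbwA<n} via \cite[Exercise VII.1.19]{He0} and the conjugate associativity isomorphism \cite[Theorem II.5.21]{He0}, I then obtain
\[
{\rm Ext}^n_{(\A \ptp \B)^e}(\A \ptp \B, (X \ptp Y)^*) = H^n\big( \big( (\beta(\A) \ptp \beta(\B)) \ptp_{(\A \ptp \B)^e} (X \ptp Y) \big)^* \big).
\]

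It therefore remains to identify the complex $(\beta(\A) \ptp \beta(\B)) \ptp_{(\A \ptp \B)^e} (X \ptp Y)$ with ${\mathcal C}_{\sim}(\A, X) \ptp {\mathcal C}_{\sim}(\B, Y)$ up to isometric isomorphism. By Definition \ref{XptpY} the degree-$n$ term of the former is $\bigoplus_{p+q=n} (\beta_p(\A) \ptp \beta_q(\B)) \ptp_{(\A \ptp \B)^e} (X \ptp Y)$, so the task reduces to a natural isomorphism
\[
(\beta_p(\A) \ptp \beta_q(\B)) \ptp_{(\A \ptp \B)^e} (X \ptp Y) \iso (\beta_p(\A) \ptp_{\A^e} X) \ptp (\beta_q(\B) \ptp_{\B^e} Y)
\]
together with the standard identification $\beta_p(\A) \ptp_{\A^e} X \iso X \ptp \A^{\ptp p} = C_p(\A, X)$ (and its analogue for $\B$), under which the bar differential becomes the Hochschild differential. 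The main step is this Fubini-type factorisation of the balanced tensor product; natural isometric isomorphisms of exactly this kind were already established in the proof of Theorem \ref{dbwAptpB}, the point being that $(\A \ptp \B)^e$ acts on the essential modules $\beta_p(\A) \ptp \beta_q(\B)$ and $X \ptp Y$ through $\A \ptp \B$ alone. Using the bounded approximate identities and Cohen's factorisation to absorb the adjoined identities, the tensor product over $(\A \ptp \B)^e$ then splits as the external product of the tensor products over $\A^e$ and $\B^e$, despite the fact that $(\A \ptp \B)^e = (\A \ptp \B)_+ \ptp (\A \ptp \B)_+^{op}$ and $\A^e \ptp \B^e$ do \emph{not} coincide, since $(\A \ptp \B)_+ \neq \A_+ \ptp \B_+$. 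Finally I would verify that the differential $\delta(x \otimes y) = \phi(x) \otimes y + (-1)^{\deg x} x \otimes \psi(y)$ of Definition \ref{XptpY} transports under this isomorphism to the tensor-product differential of ${\mathcal C}_{\sim}(\A, X) \ptp {\mathcal C}_{\sim}(\B, Y)$ with the same sign; chaining the two displayed equalities then gives the claim up to topological isomorphism.
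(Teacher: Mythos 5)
Your proposal is correct and follows essentially the same route as the paper's own proof: reduce $\H^n(\A\ptp\B,(X\ptp Y)^*)$ to $\ext^n_{(\A\ptp\B)^e}(\A\ptp\B,(X\ptp Y)^*)$ using the bounded approximate identity and essentiality of $X\ptp Y$, compute this Ext via the flat pseudo-resolution $\beta(\A)\ptp\beta(\B)$ furnished by Proposition \ref{ProductResolutions}, and then identify $(\beta(\A)\ptp\beta(\B))\ptp_{(\A\ptp\B)^e}(X\ptp Y)$ with ${\mathcal C}_{\sim}(\A,X)\ptp{\mathcal C}_{\sim}(\B,Y)$ by conjugate associativity and the Fubini-type factorisation of the balanced tensor product (which the paper justifies by citing \cite[Proposition II.3.12]{He0}, where you instead appeal to the analogous isometric isomorphisms in the proof of Theorem \ref{dbwAptpB} --- the same mathematical content).
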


\begin{proof}  Consider the flat
 pseudo-resolutions 
$0 \leftarrow \A \stackrel {\pi_{\A}} {\longleftarrow} \beta(\A )$
and 
$0 \leftarrow \B \stackrel {\pi_{\B}} {\longleftarrow} \beta(\B )$ of 
 $\A$ and $\B$ in the categories of bimodules. By
Proposition \ref{ProductResolutions} their projective tensor product
$\beta(\A ){\mathbin{\widehat{\otimes}}} \beta(\B )$ is an $\A\ptp\B$-flat  
pseudo-resolution
of $\A{\mathbin{\widehat{\otimes}}}\B$ in $\A \ptp \B$-$\mod$-$\A \ptp \B$.

By \cite[Proposition 3.7]{LW},
since the Banach algebra $\A \ptp \B$ has a bounded approximate identity,
 for $n \ge 0$, up to topological
isomorphism, 
\[ \H^n(\A \ptp \B, (X \ptp Y)^*) = 
\ext^n_{(\A \ptp \B)^e}(\A \ptp \B, (X \ptp Y)^*).\]
By \cite[Exercise VII.1.19]{He0}, 
$\ext^n_{(\A \ptp \B)^e}(\A \ptp \B, (X \ptp Y)^*)$
is the cohomology of the complex 
 $ {_{\A \ptp \B} h_{\A \ptp \B}} ({\F}, (X \ptp Y)^*)$, where 
$ 0 \leftarrow \A \ptp \B \leftarrow {\F}$ is a flat pseudo-resolution of 
$\A \ptp \B$
in  $\A \ptp \B$-$\mod$-$\A \ptp \B$. Thus,
 $\ext^n_{(\A \ptp \B)^e} (\A \ptp \B, (X \ptp Y)^*)$ can be computed 
by use of the flat
pseudo-resolution $\beta(\A ){\mathbin{\widehat{\otimes}}} \beta(\B )$.
Hence 
\newline $\ext^n_{(\A \ptp \B)^e} (\A \ptp \B, (X \ptp Y)^*)$
is the cohomology of the complex 
\newline $ {_{\A \ptp \B} h_{\A \ptp \B}} 
(\beta(\A ){\mathbin{\widehat{\otimes}}} \beta(\B ), (X \ptp Y)^*).$
By the conjugate associativity law \cite[Theorem  II.5.21]{He0},
there is a natural isomorphism of Banach spaces:
\begin{eqnarray*}
{_{\A \ptp \B} h_{\A \ptp \B}} 
(\beta(\A ){\mathbin{\widehat{\otimes}}} \beta(\B ), (X \ptp Y)^*)&\cong & 
((\beta(\A ){\mathbin{\widehat{\otimes}}}\beta(\B ))
{\mathbin{\widehat{\otimes}}}_{(\A \ptp \B)^e}(X \ptp Y))^*\\ 
 &\cong & 
((X \ptp Y) {\mathbin{\widehat{\otimes}}}_{(\A \ptp \B)^e}
(\beta(\A ){\mathbin{\widehat{\otimes}}}\beta(\B )))^*.
\end{eqnarray*}

By \cite[Proposition II.3.12]{He0}, one can see that 
the following chain complexes are topologically isomorphic:
\begin{eqnarray*} 
(X \ptp Y) {\mathbin{\widehat{\otimes}}}_{(\A \ptp \B)^e}
(\beta(\A ){\mathbin{\widehat{\otimes}}}\beta(\B )) &\cong & 
(X {\mathbin{\widehat{\otimes}}}_{\A^e} \beta(\A )){\mathbin{\widehat{\otimes}}}
(Y {\mathbin{\widehat{\otimes}}}_{\B^e} \beta(\B ))\\
&\cong & 
{\mathcal C}_{\sim}(\A,  X) \ptp {\mathcal C}_{\sim}(\B, Y).
\end{eqnarray*} 
Thus, up to topological isomorphism, 
\begin{eqnarray*} 
\H^n(\A \ptp \B, (X \ptp Y)^*) &=& 
 H^n (((X \ptp Y) {\mathbin{\widehat{\otimes}}}_{(\A \ptp \B)^e}
(\beta(\A ){\mathbin{\widehat{\otimes}}}\beta(\B )))^*)\\
 &=& 
H^n (({\mathcal C}_{\sim}(\A, X) \ptp {\mathcal C}_{\sim}(\B, Y))^*).
\qedhere
\end{eqnarray*} 
\end{proof}

Corollary \ref{ExternalProductCohom} to
Theorem \ref{ExternalProductExt} gives another proof of the K$\ddot{\rm u}$nneth formula for Hochschild cohomology groups of Banach algebras with  bounded approximate identities
(see \cite[Theorem 5.5]{GLW2}).

\begin{corollary}\label{ExternalProductCohom}
Let $\A$ and $\B$  be
  Banach  algebras with  bounded approximate identities, let $X$  
be an essential  Banach 
 $\A$-bimodule and let $Y$ be an essential  Banach  $\B$-bimodule. 
 Suppose that
all boundary maps of the standard homology complexes 
${\mathcal C}_{\sim}(\A, X)$
and ${\mathcal C}_{\sim}(\B, Y)$ have closed range and, for all n,  
$B_n(\A, X)$ and $\H_n(\A ,X)$ are strictly flat. Then,
for $n \ge 0$, up to topological
isomorphism, 
\[
\H^n(\A \ptp \B, (X \ptp Y)^*) = 
\displaystyle{\bigoplus_{m+q=n}} [\H_m(\A , X) \ptp \H_q(\B, Y)]^*. \]
\end{corollary}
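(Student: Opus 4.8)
The plan is to derive the formula from Theorem~\ref{ExternalProductExt} together with the topological Künneth formula for the tensor product of chain complexes of Banach spaces and the standard duality between the cohomology of a dual complex and the dual of its homology. By Theorem~\ref{ExternalProductExt}, up to topological isomorphism,
\[
\H^n(\A \ptp \B, (X \ptp Y)^*) \cong H^n(({\mathcal C}_{\sim}(\A, X) \ptp {\mathcal C}_{\sim}(\B, Y))^*),
\]
so it suffices to compute the cohomology of the dual of the tensor product complex ${\mathcal C}_{\sim}(\A, X) \ptp {\mathcal C}_{\sim}(\B, Y)$ in terms of the Hochschild homology spaces $\H_m(\A, X)$ and $\H_q(\B, Y)$.

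First I would apply the Künneth formula for chain complexes to the homology of ${\mathcal C}_{\sim}(\A, X) \ptp {\mathcal C}_{\sim}(\B, Y)$. The hypotheses are tailored to this: since all boundary maps of the two standard complexes have closed range, each space $\H_m(\A, X)$ and $\H_q(\B, Y)$ is a Banach space, and the strict flatness of $B_n(\A, X)$ and $\H_n(\A, X)$ (that is, of the \emph{first} factor) forces the Tor-type correction terms in the Künneth sequence to vanish and the comparison map to be a topological isomorphism. This yields
\[
H_n({\mathcal C}_{\sim}(\A, X) \ptp {\mathcal C}_{\sim}(\B, Y)) \cong \bigoplus_{m+q=n} \H_m(\A, X) \ptp \H_q(\B, Y).
\]
In particular this homology space is Hausdorff, so the boundary maps of the tensor product complex themselves have closed range.

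Next I would pass from homology to the cohomology of the dual complex. For any chain complex of Banach spaces all of whose boundary maps have closed range, the cohomology of the dual complex is topologically isomorphic to the dual of the homology; applying this to ${\mathcal C}_{\sim}(\A, X) \ptp {\mathcal C}_{\sim}(\B, Y)$, and using that the dual of a finite direct sum is the direct sum of the duals, gives
\[
H^n(({\mathcal C}_{\sim}(\A, X) \ptp {\mathcal C}_{\sim}(\B, Y))^*) \cong \bigoplus_{m+q=n} [\H_m(\A, X) \ptp \H_q(\B, Y)]^*.
\]
Combining this with the display obtained from Theorem~\ref{ExternalProductExt} yields the claim.

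The main obstacle is the topological Künneth step: one must verify that the strict flatness of $B_n(\A, X)$ and $\H_n(\A, X)$ is exactly the condition needed both to annihilate the correction terms and to guarantee that the Künneth comparison map is a \emph{topological} (rather than merely algebraic) isomorphism, while the closed-range hypotheses keep every homology space Hausdorff, so that the final dualization commutes with passing to homology.
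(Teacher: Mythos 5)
Your proposal is correct and follows essentially the same route as the paper's proof: first Theorem \ref{ExternalProductExt}, then the topological K\"unneth formula for the tensor product of the two standard complexes (the paper cites \cite[Corollary 5.4]{GLW1}, whose hypotheses are exactly the closed-range and strict-flatness conditions you identify), and finally the duality $H^n(\X^*)\iso(H_n(\X))^*$ valid when the homology spaces are Banach (the paper's \cite[Corollary 4.9]{GLW1}). The paper discharges your ``main obstacle'' simply by citation, so no further verification is needed.
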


\begin{proof} By  Theorem \ref{ExternalProductExt},
 up to topological isomorphism, 
$$ 
\H^n(\A \ptp \B, (X \ptp Y)^*) =
H^n (({\mathcal C}_{\sim}(\A, X) \ptp {\mathcal C}_{\sim}(\B, Y))^*).
$$ 
By \cite[Corollary 5.4]{GLW1},
 up to  topological  isomorphism, 
$$ H_n ({\mathcal C}_{\sim}(\A, X) \ptp {\mathcal C}_{\sim}(\B, Y))
=
\displaystyle{\bigoplus_{m+q=n}} 
[H_m({\mathcal C}_{\sim}(\A, X)) \ptp H_q({\mathcal C}_{\sim}(\B, Y))].
$$
Hence, by \cite[Corollary 4.9]{GLW1}, since the
$H_n ({\mathcal C}_{\sim}(\A, X) \ptp {\mathcal C}_{\sim}(\B, Y))$
are Banach spaces,
$$H^n (({\mathcal C}_{\sim}(\A, X) \ptp {\mathcal C}_{\sim}(\B, Y))^*)
\iso [H_n ({\mathcal C}_{\sim}(\A, X) \ptp {\mathcal C}_{\sim}(\B, Y))]^*.$$

Therefore
\[
\H^n(\A \ptp \B, (X \ptp Y)^*) = 
\displaystyle{\bigoplus_{m+q=n}} [\H_m(\A , X) \ptp \H_q(\B, Y)]^*.
\qedhere \]
\end{proof}

\begin{remark} Under the conditions of Corollary \ref{ExternalProductCohom},
 $\H^n(\A \ptp \B, (X \ptp Y)^*)$ is a Banach space, and 
by \cite[Corollary 4.9]{GLW1}, $\H_n(\A \ptp \B, X \ptp Y)$ 
is a Banach space too.
\end{remark}

The closure in $X$ of the linear span of elements of the form 
$\{a \cdot x -x \cdot a:\; a \in \A, x \in X \}$ is denoted by $[X, \A].$


\begin{theorem}\label{Aamen-ExternalProductExt}
Let $\A$ and $\B$  be  Banach  algebras with  bounded approximate 
identities, let $X$  be an essential  Banach 
 $\A$-bimodule and let $Y$ be an essential  Banach  $\B$-bimodule.
Suppose $\A$ is amenable. Then, for $n \ge 0$, up to topological
isomorphism, 
\[ \H^n(\A \ptp \B, (X \ptp Y)^*) = 
\H^n(\B, (X/{[X,\A]}\ptp Y)^*), \]
where $b \cdot (\bar{x} \otimes y) = ( \bar{x} \otimes b \cdot y)$ and
$(\bar{x} \otimes y) \cdot b = ( \bar{x} \otimes y \cdot b)$ for
$\bar{x} \in X/{[X,\A]}$, $y \in Y$ and $b \in \B$.
\end{theorem}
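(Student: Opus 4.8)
The plan is to re-run the argument of Theorem \ref{ExternalProductExt}, but with the bar pseudo-resolution of $\A$ replaced by the shortest possible one, the amenability of $\A$ being exactly what permits this shortening. Concretely, since $\A$ is amenable we have $db_w\A=0$, so by Theorem \ref{dbwA<=n} the $\A$-bimodule $\A$ admits a flat pseudo-resolution of length $0$ in $\A$-$\essmod$-$\A$; dualizing the augmentation of such a resolution $0\leftarrow\A\stackrel{\varepsilon}{\leftarrow}P_0\leftarrow 0$ shows that $\A^*$ is a retract of the injective module $P_0^*$, hence $\A$ is itself flat in $\A$-$\essmod$-$\A$. (Equivalently, an amenable algebra is biflat with bounded approximate identity, so $\A$ is flat in $\A$-$\mod$-$\A$ and, by Proposition \ref{injective-mod-essmod}(ii), in $\A$-$\essmod$-$\A$.) Thus the one-term complex $[\A]\colon 0\leftarrow\A\stackrel{\id}{\leftarrow}\A\leftarrow 0$ is a flat pseudo-resolution of $\A$.

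Next I would keep the bar resolution $0\leftarrow\B\stackrel{\pi_\B}{\leftarrow}\beta(\B)$ and apply Proposition \ref{ProductResolutions} to the pair $[\A]$ and $\beta(\B)$. Since $[\A]$ is concentrated in degree $0$, the product $[\A]\ptp\beta(\B)$ has $n$-th term $\A\ptp\beta_n(\B)$ with boundary $\id_\A\otimes d_{\beta(\B)}$, and it is a flat pseudo-resolution of $\A\ptp\B$ in $\A\ptp\B$-$\essmod$-$\A\ptp\B$. As $\A\ptp\B$ has a bounded approximate identity and $X\ptp Y$ is essential, Proposition \ref{H-Ext-Abai} together with \cite[Exercise VII.1.19]{He0} gives, up to topological isomorphism, $\H^n(\A\ptp\B,(X\ptp Y)^*)=\ext^n_{(\A\ptp\B)^e}(\A\ptp\B,(X\ptp Y)^*)$, computed as the cohomology of the complex ${_{\A\ptp\B}h_{\A\ptp\B}}([\A]\ptp\beta(\B),(X\ptp Y)^*)$.

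Then, exactly as in the proof of Theorem \ref{ExternalProductExt}, I would invoke conjugate associativity \cite[Theorem II.5.21]{He0} and the splitting of $\ptp_{(\A\ptp\B)^e}$ from \cite[Proposition II.3.12]{He0} to identify the predual chain complex, up to topological isomorphism, with $(X\ptp_{\A^e}\A)\ptp(Y\ptp_{\B^e}\beta(\B))$. The second factor is $\C_\sim(\B,Y)$ as before; the only new computation is the degree-$0$ identification $X\ptp_{\A^e}\A\iso X/[X,\A]$, the zeroth Hochschild homology of $\A$ with coefficients in $X$. This exhibits the complex as $(X/[X,\A])\ptp\C_\sim(\B,Y)$, which agrees termwise and with matching boundary maps with $\C_\sim(\B,(X/[X,\A])\ptp Y)$ once $(X/[X,\A])\ptp Y$ carries the $\B$-bimodule structure stated in the theorem (the $\B$-action running only through the $Y$-factor). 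Dualizing and passing to cohomology then yields $\H^n(\A\ptp\B,(X\ptp Y)^*)=H^n((\C_\sim(\B,(X/[X,\A])\ptp Y))^*)=\H^n(\B,((X/[X,\A])\ptp Y)^*)$, which is the desired formula.

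I expect the main obstacle to lie in the first step: making precise that amenability allows the replacement of $\beta(\A)$ by the trivial resolution $[\A]$, i.e. that $\A$ is flat as an essential bimodule over itself, and in verifying that the degree-$0$ tensor $X\ptp_{\A^e}\A$ is genuinely the \emph{reduced} quotient $X/[X,\A]$ with $[X,\A]$ the stated closed span, rather than an unreduced variant. Once the resolution of $\A$ has been shortened to length $0$, the remaining tensor-product bookkeeping (the splitting \cite[Proposition II.3.12]{He0} and the signs in the boundary maps) is routine and parallels Theorem \ref{ExternalProductExt} verbatim.
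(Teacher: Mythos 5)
Your proposal is correct and follows essentially the same route as the paper: the paper's proof likewise replaces $\beta(\A)$ by the length-zero flat pseudo-resolution $0 \leftarrow \A \stackrel{\id_\A}{\longleftarrow} \A \leftarrow 0$ (justified by amenability implying biflatness), tensors it with $\beta(\B)$ via Proposition \ref{ProductResolutions}, and then runs the Theorem \ref{ExternalProductExt} machinery (conjugate associativity and \cite[Proposition II.3.12]{He0}) together with the identification $X \ptp_{\A^e} \A \iso X/[X,\A]$, which the paper settles by citing \cite[Proposition VII.2.17]{He0}. Your alternative derivation of the flatness of $\A$ from $db_w\A = 0$ via Theorem \ref{dbwA<=n} is a harmless variant of the same first step.
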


\begin{proof} By assumption $\A$ is biflat, so
$0 \leftarrow \A \stackrel {id_{\A}} {\longleftarrow}  \A \leftarrow 0$ is a flat pseudo-resolution of $\A$ in the category of $\A$-bimodules.
 As in Theorem \ref{ExternalProductExt} we consider the flat pseudo-resolutions 
$0 \leftarrow \A \stackrel {id_{\A}} {\longleftarrow}  \A \leftarrow 0$ and $0 \leftarrow \B \stackrel {\pi_{\B}} {\longleftarrow} \beta(\B )$ of  $\A$ and $\B$ in the categories of bimodules. By
Proposition \ref{ProductResolutions}, their projective tensor product
$\A {\mathbin{\widehat{\otimes}}} \beta(\B )$ is an $\A\ptp\B$-flat  
pseudo-resolution
of $\A {\mathbin{\widehat{\otimes}}}\B$ in $\A \ptp \B$-$\mod$-$\A \ptp \B$.

Similar to Theorem \ref{ExternalProductExt}, one can see that, up to topological isomorphism, 
\begin{eqnarray*} 
\H^n(\A \ptp \B, (X \ptp Y)^*) &=& 
 H^n (((X \ptp Y) {\mathbin{\widehat{\otimes}}}_{(\A \ptp \B)^e}
(\A {\mathbin{\widehat{\otimes}}}\beta(\B )))^*).\\
\end{eqnarray*} 

One can check that, for an amenable Banach algebra $\A$ and  
 for an essential module $X$, up to topological isomorphism,
$X {\mathbin{\widehat{\otimes}}}_{\A^e} \A = X/{[X,\A]}$; see
\cite[Proposition VII.2.17]{He0}. Therefore, by \cite[Proposition II.3.12]{He0}, one can see that 
the following chain complexes are topologically isomorphic:
\begin{eqnarray*} 
(X \ptp Y) {\mathbin{\widehat{\otimes}}}_{(\A \ptp \B)^e}
(\A {\mathbin{\widehat{\otimes}}}\beta(\B )) &\cong & 
(X {\mathbin{\widehat{\otimes}}}_{\A^e} \A ){\mathbin{\widehat{\otimes}}}
(Y {\mathbin{\widehat{\otimes}}}_{\B^e} \beta(\B ))\\
&\cong & 
X/{[X,\A]} \ptp {\mathcal C}_{\sim}(\B, Y).
\end{eqnarray*} 
Thus, up to topological isomorphism, 
\begin{eqnarray*} 
\H^n(\A \ptp \B, (X \ptp Y)^*) &=& 
 H^n (((X \ptp Y) {\mathbin{\widehat{\otimes}}}_{(\A \ptp \B)^e}
(\A {\mathbin{\widehat{\otimes}}}\beta(\B )))^*)\\
 &=& 
H^n ({\mathcal C}_{\sim}(\B, X/{[X,\A]} \ptp Y))^*)\\
&=& \H^n(\B, (X/{[X,\A]}\ptp Y)^*),
\end{eqnarray*} 
where $b \cdot (\bar{x} \otimes y) = ( \bar{x} \otimes b \cdot y)$ and
$(\bar{x} \otimes y) \cdot b = ( \bar{x} \otimes y \cdot b)$ for
$\bar{x} \in X/{[X,\A]}$, $y \in Y$ and $b \in \B$.
\end{proof}

\begin{example} {\rm 
Let $\A$ be  the  Banach algebra $L^1(\RR_+)$ of
complex-valued, 
\newline Lebesgue measurable functions $f$ on $\RR_+$ with
finite $L^1$-norm and
convolution multiplication. In \cite[Theorem 4.6]{GLW2} we showed that 
 all boundary maps of the standard homology complex ${\mathcal C}_{\sim}(\A, \A)$ have closed ranges and that $\H_n(\A , \A)$ and $B_n(\A ,\A)$ are strictly flat in ${\mathcal Ban}$. 
In \cite[Theorem 6.4]{GLW2} we describe explicitly the simplicial homology groups $\H_n(L^1(\RR_+^k),L^1(\RR_+^k))$ and cohomology groups  $\H^n(L^1(\RR_+^k),(L^1(\RR_+^k))^*)$ of the semigroup algebra 
$L^1(\RR_+^k)$.}
\end{example}

\begin{corollary}\label{ho-coho-L1-B} Let 
 $\C$ be an amenable Banach algebra. Then
$$\H_n( L^1(\RR_+^k)\ptp \C, L^1(\RR_+^k)\ptp \C) \iso \{0 \}\;{\rm if} \;\;n>k;$$ 
$$\H^n \left(L^1(\RR_+^k)\ptp \C, \left(L^1(\RR_+^k)\ptp \C \right)^* \right) 
\iso \{0 \}\;{\rm if} \;\;n>k;$$
up to topological isomorphism, 
$$\H_n(L^1(\RR_+^k)\ptp \C, L^1(\RR_+^k)\ptp \C) \iso {\bigoplus\nolimits^{k \choose n}} L^1(\RR_+^k)\ptp \left(\C/[\C, \C] \right)\;{\rm if} \; n\leq k;$$
\noindent and
$$\H^n(L^1(\RR_+^k)\ptp \C, (L^1(\RR_+^k)\ptp \C)^*) \iso
 {\bigoplus\nolimits^{k \choose n}} \left[L^1(\RR_+^k) \ptp \left(\C/[\C,\C] \right) \right]^* $$
if $n\leq k.$
\end{corollary}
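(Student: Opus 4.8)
The plan is to reduce the whole computation to the already-described Hochschild homology of the semigroup algebra $L^1(\RR_+^k)$, using amenability of $\C$ to collapse the $\C$-side of the resolution. I set $\A = \C$ and $\B = L^1(\RR_+^k)$, both of which carry bounded approximate identities, and take the bimodules $X = \C$ and $Y = L^1(\RR_+^k)$, so that $X \ptp Y \iso L^1(\RR_+^k)\ptp\C$ as an $\A \ptp \B$-bimodule. For the cohomology I apply Theorem \ref{Aamen-ExternalProductExt} directly: since $\C$ is amenable it gives, up to topological isomorphism,
\[
\H^n(L^1(\RR_+^k)\ptp\C, (L^1(\RR_+^k)\ptp\C)^*) \iso \H^n(L^1(\RR_+^k), ((\C/[\C,\C])\ptp L^1(\RR_+^k))^*).
\]
For the homology I use the chain-level identification established inside the proof of that theorem, namely $(X\ptp Y)\ptp_{(\A\ptp\B)^e}(\A\ptp\beta(\B)) \iso (\C/[\C,\C])\ptp {\mathcal C}_{\sim}(\B, Y)$. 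Writing $E = \C/[\C,\C]$ and noting that $E$ acts trivially in the bimodule $E \ptp L^1(\RR_+^k)$, so that ${\mathcal C}_{\sim}(L^1(\RR_+^k), E\ptp L^1(\RR_+^k)) = E\ptp{\mathcal C}_{\sim}(L^1(\RR_+^k), L^1(\RR_+^k))$, both the homology and the cohomology are thereby identified with those of the single complex $E\ptp{\mathcal C}_{\sim}(L^1(\RR_+^k), L^1(\RR_+^k))$.

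The next step is to pull the fixed Banach space $E$ out past the formation of homology. By \cite[Theorem 4.6]{GLW2} (recalled in the Example above) all boundary maps of ${\mathcal C}_{\sim}(L^1(\RR_+^k), L^1(\RR_+^k))$ have closed range and the groups $\H_n$ and $B_n$ are strictly flat in ${\mathcal Ban}$. Consequently $E\ptp(-)$ preserves the topological exactness of the short exact sequences $0\to B_n\to Z_n\to\H_n\to 0$ and $0\to Z_n\to C_n\to B_{n-1}\to 0$, and a routine diagram chase gives
\[
H_n(E\ptp{\mathcal C}_{\sim}(L^1(\RR_+^k), L^1(\RR_+^k))) \iso E\ptp \H_n(L^1(\RR_+^k), L^1(\RR_+^k)),
\]
which is a Banach space. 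Since this homology is Hausdorff, \cite[Corollary 4.9]{GLW1} yields the dual statement $H^n((E\ptp{\mathcal C}_{\sim})^*)\iso (E\ptp\H_n(L^1(\RR_+^k), L^1(\RR_+^k)))^*$, so the cohomology is obtained from the homology by duality.

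It then remains only to substitute the explicit values. By \cite[Theorem 6.4]{GLW2}, $\H_n(L^1(\RR_+^k), L^1(\RR_+^k)) \iso \{0\}$ for $n > k$ and $\H_n(L^1(\RR_+^k), L^1(\RR_+^k)) \iso {\bigoplus\nolimits^{k \choose n}} L^1(\RR_+^k)$ for $n\le k$. Feeding these in, together with the identification $E\ptp{\bigoplus\nolimits^{k \choose n}}L^1(\RR_+^k) \iso {\bigoplus\nolimits^{k \choose n}}(L^1(\RR_+^k)\ptp(\C/[\C,\C]))$, gives at once the two vanishing statements for $n>k$ and the two explicit formulas for $n\le k$, the cohomological ones appearing as the duals of the homological ones.

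The step I expect to be the main obstacle is the commutation of $E\ptp(-)$ with the passage to homology, that is, the verification that no higher $\Tor$-type correction terms survive. The crucial point is that the strict flatness is required only of $\H_n$ and $B_n$ of ${\mathcal C}_{\sim}(L^1(\RR_+^k), L^1(\RR_+^k))$ and \emph{not} of $E=\C/[\C,\C]$; this is exactly why amenability of $\C$ is used to collapse its resolution to $\C$ in degree $0$, since $\C/[\C,\C]$ is an arbitrary Banach space that need not be flat, and it is precisely this feature that makes the present argument available where the symmetric Künneth formula of Corollary \ref{ExternalProductCohom} is not. A secondary point to check carefully is that the homology counterpart of Theorem \ref{Aamen-ExternalProductExt} is legitimate, namely that the flat pseudo-resolution $\A\ptp\beta(\B)$ computes the Hochschild homology $\H_n(\A\ptp\B, X\ptp Y)$ and not merely its dual, which again rests on the bounded-approximate-identity machinery of \cite{LW} used throughout.
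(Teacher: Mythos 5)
Your route is genuinely different from the paper's: the paper proves the corollary by induction on $k$, writing $L^1(\RR_+^k)\ptp \C \iso L^1(\RR_+)\ptp \B$ with $\B = L^1(\RR_+^{k-1})\ptp\C$ and applying the homology K\"unneth formula \cite[Theorem 5.5]{GLW2} at each step, never invoking Theorem \ref{Aamen-ExternalProductExt} at all. However, your version has two genuine gaps. The first concerns the homology half. Theorem \ref{Aamen-ExternalProductExt} and its proof are purely cohomological: a flat pseudo-resolution is only \emph{weakly} admissible, so tensoring it over $(\A\ptp\B)^e$ with a bimodule is controlled only at the level of the \emph{dual} complex, which is why the paper extracts $\ext^n_{(\A\ptp\B)^e}(\A\ptp\B,(X\ptp Y)^*)$ and nothing else from it. Neither this paper nor \cite{LW} contains a statement that flat pseudo-resolutions compute the Hochschild homology $\H_n$ (a Tor-type group); you flag this as a ``secondary point'' resting on the bounded-approximate-identity machinery of \cite{LW}, but no such result is available there, and this is precisely why the paper routes every homology statement through \cite[Theorem 5.5]{GLW2} instead of through its own Theorems \ref{ExternalProductExt} and \ref{Aamen-ExternalProductExt}.

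The second gap is the strict-flatness input for commuting $E\ptp(-)$, $E=\C/[\C,\C]$, past homology. Since $E$ is an arbitrary Banach space with no flatness whatsoever (as you yourself stress), the K\"unneth-type argument requires closed ranges together with strict flatness of \emph{both} $\H_n$ and $B_n$ of ${\mathcal C}_{\sim}(L^1(\RR_+^k),L^1(\RR_+^k))$ for the given $k$. You cite \cite[Theorem 4.6]{GLW2} for this, but that theorem, as used throughout the paper, concerns only $\A = L^1(\RR_+)$, i.e.\ the case $k=1$; for $k>1$ the paper cites only the explicit description of the homology groups (\cite[Theorem 6.4]{GLW2}), which yields strict flatness of $\H_n \iso \bigoplus^{k\choose n}L^1(\RR_+^k)$ and closedness of boundaries, but not strict flatness of the boundary spaces $B_n$. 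Avoiding exactly this missing input is the point of the paper's inductive design: at every step the first K\"unneth factor is $L^1(\RR_+)$, where strict flatness of $B_n$ and $\H_n$ is known from \cite[Theorem 4.6]{GLW2}, while the second factor $L^1(\RR_+^{k-1})\ptp\C$ needs only Hausdorff homology (hence closed boundaries), which the inductive hypothesis supplies. Your argument could be repaired either by proving strict flatness of $B_n(L^1(\RR_+^k),L^1(\RR_+^k))$ for all $k$ (which would itself require an induction), or by reverting to the paper's scheme, in which case the appeal to Theorem \ref{Aamen-ExternalProductExt} becomes superfluous.
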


\begin{proof} Let $\A=L^1(\RR_+)$. Note that $\A$ and $\C$ have bounded approximate identities.
By \cite[Theorem 5.4]{Ly5}, for an amenable Banach algebra $\C$, 
${\mathcal H}_{n}({\mathcal C}, \C) \iso \{0 \}$ for  all  $n \geq 1$,
${\mathcal H}_0({\mathcal C}, \C) \iso {\mathcal C}/[{\mathcal C}, {\mathcal C}]$. Therefore all boundary maps of the standard homology complex ${\mathcal C}_{\sim}(\C, \C)$ have closed ranges.

In \cite[Theorem 4.6]{GLW2} we showed that 
 all boundary maps of the standard homology complex ${\mathcal C}_{\sim}(\A, \A)$ have closed ranges and that $\H_n(\A , \A)$ and $B_n(\A ,\A)$
are strictly flat in ${\mathcal Ban}$. By \cite[Theorem 4.6]{GLW2},
up to topological isomorphism, the
simplicial homology groups $\H_n(\A,\A)$ are given by 
  $\H_0(\A,\A) \iso \H_1(\A,\A) \iso \A = L^1(\RR_+)$ and
  $\H_n(\A,\A) \iso \{0\} $ for $n\geq 2$.\\

 Note that 
$L^1(\RR_+^k)\ptp \C \iso \A \ptp \B$ where $\B=  L^1(\RR_+^{k-1}) \ptp \C$. We use induction on $k$ to prove the corollary for homology groups. 
For $k=1$, the result follows  from \cite[Theorem 5.5]{GLW2}. The simplicial homology groups $\H_n(\A \ptp \C,\A \ptp \C)$ are given, up to topological isomorphism, by 
 $$ \H_0(\A \ptp \C,\A \ptp \C) \iso  \H_1(\A \ptp \C,\A \ptp \C) \iso \A \ptp \left(\C/[\C, \C] \right)$$ and
 $$ \H_n(\A \ptp \C,\A \ptp \C) \iso \{0\}$$  for $n\geq 2$.

Let $k>1$ and suppose that the result for homology holds for $k-1$. 
As $ L^1(\RR_+^k)\ptp \C \iso \A \ptp \B$ where $\B=  L^1(\RR_+^{k-1}) \ptp \C$, we have
$$\H_n(L^1(\RR_+^k)\ptp \C,L^1(\RR_+^k)\ptp \C) \iso \H_n(\A \ptp \B, \A \ptp \B).$$ 
It also follows from the inductive hypothesis that, 
for all $n$, the $\H_n(\B,\B)$ are Banach spaces and hence the $B_n(\B,\B)$ are closed. 
We can therefore apply \cite[Theorem 5.5]{GLW2} for $\A$ and $\B=L^1(\RR_+^{k-1}) \ptp \C$, where $\C$ is an amenable Banach algebra, to get
\begin{eqnarray*}  \H_n(\A \ptp \B, \A \ptp \B) & \iso & 
 \bigoplus_{m+q=n}\left[\H_m(\A,\A)\ptp \H_q(\B,\B)\right]. \end{eqnarray*}
The terms in this direct sum vanish for $m\geq 2$, and thus we only need 
to consider 
$$\left(\H_0(\A,\A)\ptp \H_n(\B,\B)\right) \oplus 
\left(\H_1(\A,\A)\ptp \H_{n-1}(\B,\B)\right).$$

For cohomology groups, by \cite[Corollary 4.9]{GLW2},
$$\H^n(L^1(\RR_+^k)\ptp \C, (L^1(\RR_+^k)\ptp \C)^*) \iso 
\H^n \left({\mathcal C}_{\sim} \left(L^1(\RR_+^k)\ptp \C, L^1(\RR_+^k) \ptp \C \right)^* \right)$$
$$ \iso {\bigoplus\nolimits^{k \choose n}} \left[L^1(\RR_+^k) \ptp \left(\C/[\C,\C] \right) \right]^* $$
if $n\leq k.$
\end{proof} 
\begin{example}~  {\rm  Some examples of $C^*$-algebras without
non-zero bounded traces are: \\
(i)  The $C^*$-algebra ${\mathcal K}(H)$ of compact operators on  an infinite-dimensional Hilbert space $H$; see \cite[Theorem 2]{An}. We can also show that $ C(\Omega,{\mathcal K}(H))^{tr} = 0$, where $\Omega$ is a compact space. \\
 (ii)  Properly infinite von Neumann
algebras ${\mathcal U}$; see \cite[Example 4.6]{Ly2}. This class includes the   $C^*$-algebra ${\mathcal B}(H)$ of all bounded operators on  an infinite-dimensional Hilbert space $H$; see also \cite{Hal} for the statement ${\mathcal B}(H)^{tr} = 0$.}\\
\end{example}
\begin{example} {\rm Let $\A=\ell^1(\ZZ_+)$ where  $$\ell^1(\ZZ_+)=\left\{
(a_n)_{n=0}^{\infty}: \sum_{n=0}^{\infty} |a_n| < \infty \right\}$$ be 
the unital semigroup Banach algebra of $\ZZ_+$ with  convolution
multiplication and  norm $\left\|(a_n)_{n=0}^{\infty}\right\| = 
\sum_{n=0}^{\infty} |a_n|$. In \cite[Theorem 7.4]{GLW1} we showed that 
 all boundary maps of the standard homology complex ${\mathcal C}_{\sim}(\A, \A)$ have closed ranges and that $\H_n(\A , \A)$ and $B_n(\A ,\A)$ are strictly flat in ${\mathcal Ban}$. 
In \cite[Theorem 7.5]{GLW1} we describe explicitly the simplicial homology groups $\H_n(\ell^1(\ZZ_+^k),\ell^1(\ZZ_+^k))$ and cohomology groups $\H^n(\ell^1(\ZZ_+^k),(\ell^1(\ZZ_+^k))^*)$ of the semigroup algebra 
$\ell^1(\ZZ_+^k)$.}
\end{example}

One can find definitions of the continuous cyclic ${\mathcal H}{\mathcal C}$ and periodic cyclic ${\mathcal H}{\mathcal P}$
homology and cohomology groups of Banach algebras in \cite{Co2}.

\begin{corollary}\label{ho-coho-A-B(H)}
Let ${\mathcal D}$ be a Banach algebra belonging to
one of the following classes:

{\rm (i)} $\D=\ell^1(\ZZ_+^k) \ptp \C$, where 
 $\C$ is a $C^*$-algebra without non-zero bounded traces;

{\rm (ii)} $\D=L^1(\RR_+^k) \ptp \C$, where 
 $\C$ is a $C^*$-algebra without non-zero bounded traces.

Then $ \H_n(\D,  \D)  \iso  \{0\}$
and $ \H^n(\D,  \D) \iso \{0\}$ for all $n \geq 0$;

$${\mathcal H}{\mathcal H}_n(\D) \iso {\mathcal H}{\mathcal H}^n(\D) \iso \{0\}\;\;{\rm for\;\; all}\;\; n\geq 0,$$
$${\mathcal H}{\mathcal C}_n(\D) \iso  {\mathcal H}{\mathcal C}^n(\D) \iso \{0\}\;\;{\rm for\;\; all}\;\; n\geq 0,$$
and
$${\mathcal H}{\mathcal P}_m(\D) \iso {\mathcal H}{\mathcal P}^m(\D) \iso \{0\}\;\;{\rm for}\;\; m=0,1.$$
\end{corollary}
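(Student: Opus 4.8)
The plan is to reduce every vanishing statement to the single fact that the $C^*$-algebra $\C$ without non-zero bounded traces is homologically trivial, and then to transport this through the commutative factor $\B := \ell^1(\ZZ_+^k)$ in case (i), resp.\ $\B := L^1(\RR_+^k)$ in case (ii), so that $\D = \B \ptp \C$. For the bottom degree one has $\H_0(\C,\C) = \C/\overline{[\C,\C]}$, whose dual is precisely the space of continuous traces on $\C$; since that space is $\{0\}$ and $\H_0(\C,\C)$ is a Banach space, Hahn--Banach forces $\H_0(\C,\C) = \{0\}$. For $n \geq 1$ the vanishing $\H_n(\C,\C) = \{0\}$ is the deep input: for amenable $\C$ (e.g.\ $\K(H)$) it follows from \cite{Ly5}, while for properly infinite von Neumann algebras it is supplied by \cite{Ly2}. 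In particular $\im d_n = \ker d_{n-1}$ in ${\mathcal C}_\sim(\C,\C)$, so all of its boundary maps have closed range.

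First I would treat Hochschild homology. Recall from \cite{GLW1} (case (i)) and \cite{GLW2} (case (ii)) that the boundary maps of ${\mathcal C}_\sim(\B,\B)$ have closed range and that $B_n(\B,\B)$ and $\H_n(\B,\B)$ are strictly flat. Since $\H_q(\C,\C)=\{0\}$ is a Banach space with closed boundary maps, the hypotheses of the K\"unneth theorem \cite[Theorem 5.5]{GLW2} hold for $\B$ (in the flat slot) and $\C$, giving up to topological isomorphism
\[
\H_n(\D,\D) = \H_n(\B\ptp\C,\B\ptp\C) \iso \bigoplus_{m+q=n} \H_m(\B,\B)\ptp\H_q(\C,\C) = \{0\},
\]
since every factor $\H_q(\C,\C)$ vanishes. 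Hence ${\mathcal H}{\mathcal H}_n(\D) = \H_n(\D,\D) = \{0\}$ for all $n \geq 0$.

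For cohomology I proceed by duality and by the cohomological form of Theorem \ref{ExternalProductExt}. Because $\H_n(\D,\D) = \{0\}$ is a Banach space and the boundary maps of ${\mathcal C}_\sim(\D,\D)$ have closed range, \cite[Corollary 4.9]{GLW1} yields ${\mathcal H}{\mathcal H}^n(\D) = \H^n(\D,\D^*) \iso (\H_n(\D,\D))^* = \{0\}$. For the coefficient module $\D$ itself I would run the argument of Theorem \ref{ExternalProductExt} with the flat pseudo-resolution $\beta(\B)\ptp\beta(\C)$ and conjugate associativity, so that $\H^n(\D,\D)$ is computed by a K\"unneth product of the cochain complexes of $\B$ and $\C$; this vanishes once the corresponding vanishing $\H^q(\C,\C)=\{0\}$ for the $C^*$-algebra is fed in from the cited structural results.

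Finally, the cyclic and periodic cyclic groups follow formally from the vanishing of all Hochschild groups. By Connes' exact sequence \cite{Co2}
\[
\cdots \longrightarrow {\mathcal H}{\mathcal H}_n(\D) \longrightarrow {\mathcal H}{\mathcal C}_n(\D) \stackrel{S}{\longrightarrow} {\mathcal H}{\mathcal C}_{n-2}(\D) \longrightarrow {\mathcal H}{\mathcal H}_{n-1}(\D) \longrightarrow \cdots,
\]
the operators $S$ are isomorphisms; starting from ${\mathcal H}{\mathcal C}_{-1}(\D) = {\mathcal H}{\mathcal C}_{-2}(\D) = \{0\}$, induction gives ${\mathcal H}{\mathcal C}_n(\D) = \{0\}$ for all $n$, and dually ${\mathcal H}{\mathcal C}^n(\D) = \{0\}$. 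Passing to the (co)limit along $S$ then forces ${\mathcal H}{\mathcal P}_m(\D) = {\mathcal H}{\mathcal P}^m(\D) = \{0\}$ for $m=0,1$. The main obstacle is concentrated entirely in the first paragraph: the higher-degree vanishing $\H_n(\C,\C)=\{0\}$ for a non-amenable $C^*$-algebra without traces is not formal and must be imported from the structure theory of such algebras; the remainder is K\"unneth bookkeeping together with the formal $SBI$ machinery.
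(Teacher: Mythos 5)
Your proof has a genuine gap at exactly the step you yourself identify as "the deep input." The corollary is stated for an \emph{arbitrary} $C^*$-algebra $\C$ without non-zero bounded traces, but your first paragraph only establishes the crucial vanishing $\H_n(\C,\C)=\{0\}$, $n\ge 1$, in two special cases: $\C$ amenable (via \cite{Ly5}) and $\C$ a properly infinite von Neumann algebra (via \cite{Ly2}). These cases do not exhaust the hypothesis --- the Calkin algebra $\B(H)/\K(H)$, for instance, has no non-zero bounded traces but is neither amenable nor a von Neumann algebra --- and, worse, \cite[Example 4.6]{Ly2} is cited in the paper only for the fact that properly infinite von Neumann algebras have no bounded traces; it supplies no homological vanishing for them. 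The input the paper actually uses, and which your argument is missing, is the Christensen--Sinclair theorem \cite[Theorem 4.1 and Corollary 3.3]{ChSi}: for \emph{every} $C^*$-algebra $\C$ without non-zero bounded traces, $\H^n(\C,\C^*)=\{0\}$ for all $n\ge 0$. Johnson's duality \cite[Corollary 1.3]{BEJ1} then converts this into $\H_n(\C,\C)=\{0\}$ for all $n\ge 0$ in full generality. This route also repairs your degree-zero argument: you assert that $\H_0(\C,\C)$ is a Banach space before knowing that the (a priori non-closed) span of commutators is closed, so your Hahn--Banach argument only kills the Hausdorffification $\C/\overline{[\C,\C]}$, not $\H_0(\C,\C)$ itself.

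A secondary issue is your one-shot application of the K\"unneth theorem \cite[Theorem 5.5]{GLW2} with $\B=\ell^1(\ZZ_+^k)$ (resp.\ $L^1(\RR_+^k)$) in the strictly flat slot. The strict flatness of $B_n(\B,\B)$ and $\H_n(\B,\B)$ is established in \cite[Theorem 7.4]{GLW1} and \cite[Theorem 4.6]{GLW2} for the \emph{one-variable} algebras $\ell^1(\ZZ_+)$ and $L^1(\RR_+)$ only; it is not available off the shelf for the $k$-variable algebras. The paper avoids this by induction on $k$: it writes $\D\iso\A\ptp\B$ with $\A=L^1(\RR_+)$ (one variable) and $\B=L^1(\RR_+^{k-1})\ptp\C$, uses the inductive hypothesis $\H_n(\B,\B)=\{0\}$ (hence $B_n(\B,\B)$ closed) to verify the hypotheses on the second factor, and keeps the strictly flat factor one-variable throughout. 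Your remaining steps are sound in outline --- duality gives the vanishing of $\H^n(\D,\D^*)$, and the cyclic and periodic groups follow from the Hochschild vanishing (the paper cites \cite[Corollary 4.7]{Ly3} rather than running the $SBI$ sequence by hand) --- though your sketch for the non-dual coefficient groups $\H^n(\D,\D)$ via Theorem \ref{ExternalProductExt} is not justified, since that theorem is proved only for dual coefficient modules.
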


\begin{proof}   
Here we  consider the case with
 $\A= L^1(\RR_+)$ and prove the statement for $\D=L^1(\RR_+^k) \ptp \C$.
By \cite[Theorem 4.1 and Corollary 3.3]{ChSi}, for a
$C^*$-algebra ${\mathcal C}$ without non-zero bounded traces $\H^n({\mathcal C}, {\mathcal C}^*) \iso  \{0\}$ for all   $n \geq 0$.
By \cite[Corollary 1.3]{BEJ1}, $\H_n({\mathcal C}, {\mathcal C}) \iso  \{0\}$ for all   $n \geq 0$.
 Therefore all boundary maps of the standard homology complex ${\mathcal C}_{\sim}(\C, \C)$ have closed ranges.

In \cite[Theorem 4.6]{GLW2} we showed that 
 all boundary maps of the standard homology complex ${\mathcal C}_{\sim}(\A, \A)$ have closed ranges and that $\H_n(\A , \A)$ and $B_n(\A ,\A)$
are strictly flat in ${\mathcal Ban}$. By \cite[Theorem 4.6]{GLW2},
up to topological isomorphism, the
simplicial homology groups $\H_n(\A,\A)$ are given by 
  $\H_0(\A,\A) \iso \H_1(\A,\A) \iso \A = L^1(\RR_+)$ and
  $\H_n(\A,\A) \iso \{0\} $ for $n\geq 2$.

 Note that 
$\D =L^1(\RR_+^k)\ptp \C \iso \A \ptp \B$ where $\B=  L^1(\RR_+^{k-1}) \ptp \C$. We use induction on $k$ to prove the corollary for homology groups. 
For $k=1$, note that $\A$ and $\C$ have bounded approximate identities.
By \cite[Theorem 5.5]{GLW2}, the simplicial homology groups $\H_n(\A \ptp \C,\A \ptp \C)$ are given, up to topological isomorphism, by 
 $$ \H_n(\A \ptp \C,\A \ptp \C) \iso
\displaystyle{\bigoplus_{m+q=n}} [\H_m(\A , \A) \ptp \H_q(\C, \C)] \iso
 \{0\}$$  for $n\geq 0$.

Let $k>1$ and suppose that the result for homology holds for $k-1$. 
As $ L^1(\RR_+^k)\ptp \C \iso \A \ptp \B$ where $\B=  L^1(\RR_+^{k-1}) \ptp \C$, we have
$$\H_n(L^1(\RR_+^k)\ptp \C,L^1(\RR_+^k)\ptp \C) \iso \H_n(\A \ptp \B, \A \ptp \B).$$ 
Note that $\A$ and $\B$ have bounded approximate identities.
Further, it follows from the inductive hypothesis that $\H_n(\B,\B) \iso  \{0\}$ for all $n \geq 0$ and hence the $B_n(\B,\B)$ are closed. 
We can therefore apply \cite[Theorem 5.5]{GLW2} to get
\begin{eqnarray*}  \H_n(\A \ptp \B, \A \ptp \B) & \iso & 
 \bigoplus_{m+q=n}\left[\H_m(\A,\A)\ptp \H_q(\B,\B)\right] \iso
 \{0\} \end{eqnarray*}
 for all $n\geq 0$.

For cohomology groups, by \cite[Corollary 1.3]{BEJ1},
$$\H^n(L^1(\RR_+^k)\ptp \C, (L^1(\RR_+^k)\ptp \C)^*) \iso \{0\}$$
for all $n \geq 0$. 

The triviality of the continuous cyclic ${\mathcal H}{\mathcal C}$
and periodic cyclic ${\mathcal H}{\mathcal P}$
homology and cohomology groups follows from \cite[Corollory 4.7]{Ly3}.
\end{proof} 


\end{document}